\numberwithin{equation}{section} \hyphenation{semi-stable}
\newcommand {\CC}{\mathbb{C}}
\newcommand {\PP}{\mathbb{P}}
\DeclareMathOperator{\codim}{codim} 
\DeclareMathOperator{\Exc}{Ex.\!C}
\DeclareMathOperator{\Jac}{Jac}
\DeclareMathOperator{\Syz}{Syz} 
\def\cocoa{{\hbox{\rm C\kern-.13em
      o\kern-.07em C\kern-.13em o\kern-.15em A}}}
\newtheorem{theorem}{Theorem}[section]
\newtheorem{lemma}[theorem]{Lemma}
\newtheorem{proposition}[theorem]{Proposition}
\newtheorem{corollary}[theorem]{Corollary}
\newtheorem{conjecture}[theorem]{Conjecture} \theoremstyle{definition}
\newtheorem{definition}[theorem]{Definition} 
\newtheorem{remark}[theorem]{Remark}
\newtheorem{example}[theorem]{Example}
\newtheorem{question}[theorem]{Question}
\definecolor{MyDarkGreen}{cmyk}{0.7,0,1,0}
\newcommand{\adim}{\ensuremath{\mathrm{adim}}}
\newcommand{\edim}{\ensuremath{\mathrm{edim}}}
\newcommand{\vdim}{\ensuremath{\mathrm{vdim}}}
\begin{document}

\title[Unexpected Hypersurfaces and their Consequences]%
{Unexpected hypersurfaces and their consequences: \\ A Survey}

\author[B. Harbourne]{Brian Harbourne$^{*}$}
\address{Department of Mathematics, University of Nebraska, Lincoln, NE 68588-0130}
\email{brianharbourne@unl.edu}

\author[J.\ Migliore]{Juan Migliore${}^{**}$}
\address{ Department of Mathematics, University of Notre Dame, Notre
  Dame, IN 46556, USA} 
\email{migliore.1@nd.edu} 

\author[U.\ Nagel]{Uwe Nagel${}^{***}$} 
\address{Department of
  Mathematics, University of Kentucky, 715 Patterson Office Tower,
  Lexington, KY 40506-0027, USA} \email{uwe.nagel@uky.edu}
  
\thanks{Printed \today\\
Harbourne was partially supported by Simons Foundation grant \#524858. \\
  Migliore was partially supported by  Simons Foundation grant 
  \#839618.\\
  Nagel was partially supported by Simons Foundation grant \#636513.  \\
}

\begin{abstract}
The notion of an unexpected curve in the plane was introduced in 2018, and was quickly generalized in several directions in a flurry of mathematical activity by many authors. In this expository paper we first describe some of the main results on unexpected hypersurfaces. Then we summarize two offshoots of this theory. First we look at sets of points in $\mathbb P^3$ whose general projection is a planar complete intersection (so-called {\it geproci} sets). Although we now  know a lot  about these sets, much remains mysterious. Then we describe an interesting measure of unexpectedness called {\it $AV$-sequences}, which have a surprising structure that is not yet fully understood.
\end{abstract}
\maketitle

\section{Introduction}

Although some of the concepts we discuss here are of interest over any algebraically closed field,
for this exposition we fix the ground field $K$ to be algebraically closed of characteristic zero. 
Our interest here is to bring to the reader's attention a burgeoning but only recently established
research field, and to sketch out  some of the directions into which it has already branched.

The notion of unexpectedness in its current form dates back only to \cite{CHMN},
but it grew out of the much older tradition of studying multivariate polynomial interpolation problems. 
A simple example of a still open interpolation problem is this: given integers $m,n,s\geq 2$, 
what is the least degree $t$ of a nontrivial form $F\in K[\PP^n]$ such that $F$ and all its partials
of order up to $m-1$ vanish on $s$ general points?

An answer is known for $n=2$ and $s<10$. The SHGH Conjecture asserts in the case
of $n=2$ and $s\geq 10$ that the answer is the least $t$ such that
$$\binom{t+2}{2} > s\binom{m+1}{2}.$$
The idea is to show, when one expects by the naive dimension count $\binom{t+2}{2} - s\binom{m+1}{2}$ 
for the dimension of the vector space of such forms that
no form exists, that one's expectations are met; i.e., that no unexpected curves occur.
However, in spite of decades of attention, this remains open, and no conjecture even exists
in general for $n>3$ (see \cite{LU} for a conjecture for $n=3$).

The problem is perhaps that we do not understand well enough how unexpected behavior 
might occur in order to show it does not occur. Thus it could be productive to broaden one's perspective
in such a way that unexpectedness can arise, and by studying its occurrences, understand
better why it seems not to occur in the open problem mentioned above.

This is part of the motivation for the work initiated in \cite{CHMN}. But it turns out that the new notion of unexpectedness
has substantial intrinsic interest, with connections that touch a broad array of disciplines,
including commutative algebra, combinatorics and representation theory, in addition to algebraic geometry.
Here is some of the recent work on this topic \cite{BMSS, DMO, DHRST, FGHM, FGHM, HMNT, MPT, PSS, Szp1, Szp2, trok, WZ}.

More specifically, the idea of unexpectedness initiated in \cite{CHMN} was to say
that $Z\subset \PP^2$ had unexpected curves 
of degree $t$ with respect to a general point of multiplicity $t-1$ if
the vector space $[I_Z]_t\cap [I_P^{t-1}]_t\subset K[\PP^2]$ of forms of degree $t$
vanishing at $Z$ and vanishing to order $t-1$ at a general point $P$ had a positive dimension bigger than
one would expect by a naive dimension count. In other words,
that $\dim [I_Z]_t\cap [I_P^{t-1}]_t > \dim [I_Z]_t-\binom{t}{2}$.
The goal was to understand which finite sets of points had such unexpected curves.

One of the reasons this has been so fruitful is that examples do arise, and they often involve very interesting structure.
The first example we became aware of came from \cite{DIV} in work related to Lefschetz problems
and it involved the $B_3$ root system. (This connection to Lefschetz problems and to root systems has persisted and
is one of the ways unexpectedness touches upon both commutative algebra and representation theory.)
Looking at the lines dual to the points has also been very useful \cite{DIV, FV, CHMN, FGST, BMSS, DMO}.
(This brings this research into contact with combinatorics in the guise of line arrangements. Indeed, one 
of the ways unexpectedness often
seems to arise is from points dual to complex line arrangements which have no points where exactly two lines intersect,
but only a few kinds of such arrangements are currently known. It is an open problem to find more examples
of such arrangements, or to show no examples beyond those currently known exist.)

The first two natural extensions of this idea are to move the problem to hypersurfaces in higher dimensional projective spaces, and to relax the restriction that the degree of the hypersurface should be one more than the multiplicity. As a first step,   
it has also been very productive to look at sets of points coming from other root systems, as was initiated in \cite{HMNT} (and continued in such work as \cite{HMT, trok}).  However, the theory has gone far beyond root systems. A third extension has been studied by Trok in \cite{trok}. He considers unexpected hypersurfaces that vanish on  a general linear linear subspace with prescribed multiplicity and 
an arbitrary finite set of points. In particular, Trok introduces the concept of a very unexpected hypersurface, which allows him to generalize the connection to hyperplane arrangements from $\PP^2$ to $\PP^n$. 
Some of the relevant research is described in \S\ref{uwe}.

The notion of unexpected hypersurfaces, especially the case of cones (where the degree and the multiplicity coincide) led to a surprising new notion,  
that of $(a,b)$-geproci sets (i.e., finite sets in $\PP^3$ whose projection to a plane from a general point is a complete intersection of type $(a,b)$ 
\cite{CM, WZ, PSS, CFFHMSS}). There is a simple example of such sets for any choice of $a,b$  called {\it grids}, but finding non-grid examples is highly non-trivial. This problem has been studied extensively especially in \cite{CFFHMSS}, and the current state of the art is described in \S\ref{brian}. 

There is another offshoot of the study of unexpected hypersurfaces, namely the notion of {\it $AV$-sequences}. These sequences were created as a measure of the extent of unexpectedness admitted by a given scheme (and now we no longer insist that we restrict to zero-dimensional schemes). In some sense they measure the {\it persistence} of unexpectedness. Some very surprising patterns have emerged, and many of the known results (and an intriguing conjecture) are described in \S\ref{juan} and \S\ref{juan2}.

Throughout this paper, the abbreviation {\it ACM} will stand for {\it Arithmetically Cohen-Macaulay}. We recall that a scheme $X$ is ACM if $R/I_X$ is a Cohen-Macaulay ring, and that there are cohomological and homological criteria for this property that are standard and not reviewed here.


\section{Unexpected curves and hypersurfaces} \label{uwe}

A basic problem in algebraic geometry is to determine the dimension of a linear system on a projective variety. For example, various versions of the Riemann-Roch theorem may be viewed as tools to address this challenge.  It is typically the case that there is an {\em expected} dimension (or codimension), given by naively counting constants, and that the expected dimension equals the actual dimension in most cases. Understanding then the  {\em special} linear systems, i.e., those whose actual dimensions are greater than the expected ones, is a subtle problem of substantial interest. 

As an example, consider the complete linear system $\mathcal L_j$ of projective plane curves of degree $j$. Its vector space dimension is  $\binom{j+2}{2}$. For integers $j\geq m$,  the requirement that the curves all have multiplicity at least $m$ at a fixed point $P$ imposes $\binom{m+1}{2}$ linear conditions. Indeed,  the   linear system of all such curves has dimension $\binom{j+2}{2} - \binom{m+1}{2}$, and so the actual and expected dimensions coincide. We will refer to this system as the  linear subsystem of curves passing through a {\em fat point of multiplicity $m$}  supported at $P$. It is a much-studied, but still open problem to compute the dimension of the linear system $\mathcal L_j$ of curves of degree $j$ passing through a {\em general} set of $r$ fat points $P_1,\ldots,P_r$ with multiplicities $m_1,\dots, m_r$. Write $X = m_1 P_1 + \cdots + m_r P_r$ for the fat point scheme supported on  $P_1,\ldots,P_r$. Thus $X$ is defined by the ideal
\[
I_X = I_{P_1}^{m_1} \cap \cdots \cap I_{P_r}^{m_r} \subset K[x, y, z], 
\]
and, abusing notation,  $\mathcal L_j = [I_X]_j$. Its expected dimension is 
\[
\max \left \{0, \ \binom{j+2}{2}  - \sum_{i=1}^r \binom{m_i+1}{2} \right \}. 
\]
The still open {\em SHGH Conjecture} \cite{segre, Ha1, G, Hi} posits a complete list of all 
$(m_1,\ldots,m_r)$ and $j$ for which the actual dimension of  $[I_X]_j$ is larger than the expected dimension and predicts the actual dimension in these cases. 

It is not too difficult to show that the expected and the actual dimension are the same if $m_1 = \cdots = m_r = 1$ and the points $P_1,\ldots,P_r$ are general. Beginning in \cite{DIV, CHMN}, a variation of the above linear systems has been studied. Let $Z \subset \PP^2$ be a finite set of points and $P$ a general point of $\PP^2$. Thus, the linear system of degree $j$ plane curves passing through $Z$ and having multiplicity at least $m$ at $P$ is given by  $[I_X]_j = [I_Z \cap I_P^m]_j$, where $X = Z + mP$. Its actual dimension is 
\[
{\adim} (Z,j,m) = \dim [I_Z \cap I_P^m]_j, 
\]
whereas its expected dimension is 
\[
{\edim} (Z,j,m) = \max \left  \{ 0, \dim [I_Z]_j - \binom{m+1}{2} \right \}. 
\]
An example \cite[Proposition 7.3]{DIV} shows the surprising fact that already in this case, it is no longer true that the expected dimension is necessarily achieved if $Z$ is {\em not} assumed to be a general set of points. Thus, the problem becomes to understand how the geometry of $Z$ can affect the desired dimension. A systematic study of this question has been initiated in \cite{CHMN}. In particular, it is shown that it has  interesting connections to the theory of line arrangements. For example, the results in \cite{CHMN} give  new perspectives on Terao's freeness conjecture, including a generalization to non-free arrangements. 

We now describe some of the results in \cite{CHMN} in more detail. Recall that $X = Z + mP$, where $Z \subset \PP^2$ is an arbitrary non-empty finite set of points and $P \in \PP^2$ a general point. 
Observe that in any degree $j <  m$ one has $\dim [I_X]_j = 0$ and if $j \ge m$ then 
$\dim [I_X]_j \geq \dim [I_Z]_j-\binom{m+1}{2}$;
i.e., the forms in $[I_X]_j$ are obtained from those of $[I_Z]_j$ by imposing at most
$\binom{m+1}{2}$ linear conditions coming from $mP$. Moreover, if $j = m$, then the last estimate is actually always sharp. Thus, the smallest degree in which the actual is not equal to the expected dimension is $j = m+1$. Following \cite[Definition 2.1]{CHMN}, we say that  $Z \subset \PP^2$  {\em admits an unexpected curve}  if there is an integer $j>0$ such that
\begin{equation}
  \label{eq:unexpected}
 \dim [I_{Z+jP}]_{j+1}   > \max \left  \{0, \ \dim [I_Z]_{j+1} - \binom{j+1}{2}\right \}.
\end{equation}
More precisely, in this case we say $Z$ admits an unexpected curve of degree $j+1$. 

It turns out that a set $Z$ of points can have unexpected curves of various degrees. In order to describe 
this range of degrees and to characterize the existence of any unexpected curve several invariants are needed. The \emph{multiplicity index of $Z$} is the integer
\[
m_Z = \min \{ j \in \mathbb Z \ |   [I_{Z+jP}]_{j+1} \neq 0\}
\]
for a general point $P$. It was first considered in work of Faenzi and Vall\'es \cite{FV}. 
Following \cite{CHMN}, the {\em speciality index of $Z$} is 
\[
u_Z = \min \Big \{ j \in \mathbb Z \ | \dim [I_{Z+jP}]_{j+1}=\binom{j+3}{2}-\binom{j+1}{2}-|Z| \Big \}, 
\]
and we define the integer 
\[
t_Z = \min \Big \{ j \in \mathbb Z \ |  \dim [I_Z]_{j+1}>\binom{j+1}{2} \Big \}. 
\]
Note that $t_Z$ depends only on the Hilbert function of $Z$. These three invariants govern the existence of unexpected curves. 

\begin{theorem}[{\cite[Theorem 1.1]{CHMN}}]
     \label{range unexpected curves} 
A finite set $Z \subset \PP^2$ admits an unexpected curve if and only if $m_Z < t_Z$. Furthermore, in this case $Z$ has an 
unexpected curve of degree $j$ if and only if $m_Z < j \le u_Z$. 
\end{theorem}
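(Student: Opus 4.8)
The plan is to study the function $j \mapsto \dim [I_{Z+jP}]_{j+1}$ for a general point $P$ and compare it with the expected value $\max\{0,\ \dim[I_Z]_{j+1} - \binom{j+1}{2}\}$, exploiting two monotonicity principles. First I would record the elementary inequality already noted in the excerpt: for every $j$, the space $[I_{Z+jP}]_{j+1}$ is cut out inside $[I_Z]_{j+1}$ by at most $\binom{j+1}{2}$ linear conditions (the vanishing of order $j$ at $P$), so $\dim[I_{Z+jP}]_{j+1} \ge \dim[I_Z]_{j+1} - \binom{j+1}{2}$ always; hence ``unexpected in degree $j$'' means precisely that this drop is strictly less than the generic $\binom{j+1}{2}$ while also $\dim[I_{Z+jP}]_{j+1}>0$. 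Since $t_Z$ is by definition the first $j$ with $\dim[I_Z]_{j+1}>\binom{j+1}{2}$, for $j<t_Z$ the expected dimension is $0$, so $Z$ is unexpected in degree $j$ exactly when $[I_{Z+jP}]_{j+1}\ne 0$, i.e.\ when $j\ge m_Z$. This immediately gives the ``if'' direction of the first assertion (if $m_Z<t_Z$, then $j=m_Z$ witnesses an unexpected curve) and also shows that in the unexpected range the relevant degrees $j$ start at $m_Z$; the content is really the interplay at and above $t_Z$.

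Next I would establish the key monotonicity: if $[I_{Z+jP}]_{j+1}\ne 0$ and the curve is unexpected in degree $j$, one can propagate unexpectedness to degree $j+1$ as long as one has not yet reached the speciality index $u_Z$. The standard device here is to intersect with a general line $\ell$ through $P$: multiplying a degree-$(j+1)$ form of multiplicity $j$ at $P$ by $\ell$ produces a degree-$(j+2)$ form of multiplicity $j+1$ at $P$, and one checks on the residual scheme (Bézout / Castelnuovo exact sequence $0\to [I_{Z'+ (j)P}]_{j+1}\to [I_{Z+(j+1)P}]_{j+2}\to [\text{restriction to }\ell]$) that the jump in dimension is controlled. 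This is the mechanism that forces the unexpected range to be an interval $m_Z< j$, and the upper end is exactly where $\dim[I_{Z+jP}]_{j+1}$ first attains the value $\binom{j+3}{2}-\binom{j+1}{2}-|Z| = \dim[I_Z]_{j+1}$ forced by the conditions becoming independent — that is, $j=u_Z$. Symmetrically, one argues that for $j>u_Z$ the conditions imposed by $jP$ on $[I_Z]_{j+1}$ are independent, so no unexpected curve occurs; and for $m_Z<t_Z$ one must check $m_Z< u_Z$, i.e.\ that the interval is nonempty, which follows because at $j=m_Z$ the space is nonzero but (being below or at $t_Z$ appropriately) has not yet reached its expected dimension.

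For the ``only if'' direction of the first statement, suppose $m_Z\ge t_Z$; I would show directly that the conditions imposed by $jP$ on $[I_Z]_{j+1}$ are always independent (equivalently $\dim[I_{Z+jP}]_{j+1}=\max\{0,\dim[I_Z]_{j+1}-\binom{j+1}{2}\}$ for all $j$), so no unexpected curve exists. For $j<t_Z$ this is automatic since then $[I_{Z+jP}]_{j+1}\subseteq[I_Z]_{j+1}$ and the latter already has dimension $\le\binom{j+1}{2}$, forcing $[I_{Z+jP}]_{j+1}$ to have its expected (possibly zero) dimension — and here one uses $m_Z\ge t_Z$ to rule out the nonvanishing-but-small phenomenon. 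For $j\ge t_Z\ge m_Z$... actually one must be a bit careful: the real input is a semicontinuity/genericity argument showing that once the conditions from a general fat point are independent in one degree, they stay independent in higher degrees (again via multiplication by a general line).

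The main obstacle I anticipate is the propagation step in both directions — showing that the unexpectedness locus in $j$ is a genuine interval with endpoints exactly $m_Z$ (exclusive) and $u_Z$ (inclusive). The upward propagation of unexpectedness and the upward propagation of ``independence of conditions'' both rely on carefully chosen exact sequences restricting to a general line through $P$, and on the fact that $P$ is general so that these restriction maps have maximal rank; keeping track of which of $\binom{j+1}{2}$ conditions are lost upon restriction, and matching the resulting numerics to the definitions of $u_Z$ and $t_Z$, is where the genuine bookkeeping lies. (I would also want the input — implicit in \cite{CHMN} — that $\dim[I_{Z+jP}]_{j+1}$ is eventually equal to its expected value, guaranteeing $u_Z<\infty$.)
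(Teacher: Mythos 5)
Your opening move is sound and matches the easy half of the argument: for $j<t_Z$ the expected dimension is $0$, so an unexpected curve of degree $j+1$ is the same as $[I_{Z+jP}]_{j+1}\neq 0$, and $m_Z<t_Z$ therefore yields an unexpected curve of degree $m_Z+1$. The gap is in everything after that. Multiplying by lines through $P$ and restricting to a residual only bounds $\dim[I_{Z+(j+1)P}]_{j+2}$ from \emph{below} (by $\dim[I_{Z+jP}]_{j+1}+1$, say), while the expected dimension can grow by $2$ at each step; so this device by itself can neither show that unexpectedness persists through degree $u_Z$, nor that it stops beyond $u_Z$, nor (in the complementary case $m_Z=t_Z$ --- note that $m_Z\le t_Z$ always holds, by Lemma \ref{lem:invariants}(c), so ``$m_Z\ge t_Z$'' is really equality) that $jP$ imposes independent conditions on $[I_Z]_{j+1}$ for every $j$. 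What is needed is a matching \emph{upper} bound, indeed an exact formula, for $\dim[I_{Z+jP}]_{j+1}$ as a function of $j$, and your sketch does not supply one; you explicitly defer exactly the steps that carry the content. There is also a concrete numerical slip: $\binom{j+3}{2}-\binom{j+1}{2}-|Z|=2j+3-|Z|$ is not $\dim[I_Z]_{j+1}$, and it equals $\dim[I_Z]_{j+1}-\binom{j+1}{2}$ only when $Z$ imposes independent conditions in degree $j+1$. The difference between ``the conditions of $Z+jP$ are independent'' and ``the conditions of $jP$ on $[I_Z]_{j+1}$ are independent'' is precisely what the pair of invariants $t_Z$ and $u_Z$ is designed to separate, and your argument blurs it at the decisive point.

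The proof in \cite{CHMN} does not proceed by direct interpolation in the plane of $Z$. It passes to the dual line arrangement $\mathcal A(Z)$ and its derivation bundle; restricting that rank-two bundle to the line dual to the general point $P$ and using the splitting type $(a_Z,b_Z)$ gives the closed formula $\dim[I_{Z+jP}]_{j+1}=\max(0,\,j-a_Z+1)+\max(0,\,j-b_Z+1)$ for all $j$. Combined with Lemma \ref{lem:invariants} ($m_Z=a_Z$, $u_Z=b_Z-1$, $m_Z\le t_Z$, and $t_Z\le u_Z$ whenever $m_Z<t_Z$), the theorem reduces to an elementary comparison of this piecewise-linear function with the expected dimension, and the interval structure of the unexpected range is automatic rather than something to be propagated degree by degree. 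The remark after Proposition \ref{prop:dZ lin gen position} --- that even for that easier statement a direct proof ``using just properties of sets of points'' would be interesting --- is a fair warning that the bookkeeping you postpone is not routine.
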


As shown in \cite{CHMN}, unexpected curves have a particular structure. According to \cite[Corollary 5.5]{CHMN}, if $Z$ admits any unexpected curve it has a unique unexpected curve of degree 
$m_Z + 1$. Denote it by $C_P (Z)$. Any further unexpected curve for $Z$ of degree $t$  is a union of 
$C_P (Z)$ and $t-m_Z-1$ lines, each of them passing through the general point $P$. In particular, any unexpected curve has exactly one irreducible component of degree greater than one. 

\begin{theorem}[{\cite[Theorem 5.9]{CHMN}}]
        \label{thm:unexp curve structure}
Let $Z \subset \PP^2$ be a finite set admitting an unexpected curve and let $P\in\PP^2$ be a general point.
Then there is a unique subset $Z' \subset Z$ such that the unique unexpected degree $m_Z +1$ curve of $Z$,  $C_P(Z)$,  is the union of 
$C_P(Z')$ and $|Z \setminus Z'|$ lines, where $C_P(Z' )$ is irreducible and the unique unexpected 
curve of $Z'$ of degree $m_{Z'}+1$. Furthermore, $C_P(Z')$ is 
rational and smooth away from $P$. 
\end{theorem}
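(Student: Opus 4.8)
The plan is to analyze the unique unexpected curve $C_P(Z)$ of minimal degree $m_Z+1$ and peel off its linear components one at a time. First I would invoke the structure result already quoted before the statement: any unexpected curve of $Z$, in particular $C_P(Z)$ itself, has exactly one irreducible component of degree greater than one, and every other component is a line through the general point $P$. Call that distinguished component $D$, and let $Z'\subseteq Z$ be the subset of $Z$ obtained by discarding the points of $Z$ lying on the linear components of $C_P(Z)$ but not on $D$ (equivalently, $Z'$ is $Z$ minus the points ``used up'' by those lines). The heart of the argument is to show that $D$ is itself the unexpected curve $C_P(Z')$ of $Z'$, that $Z'$ is uniquely determined, and that $D=C_P(Z')$ is rational and smooth away from $P$.

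The key steps, in order, would be: (1) \emph{Counting.} Show that each linear component $L$ of $C_P(Z)$ other than $D$ must pass through at least one point of $Z$ not already forced onto the other components — otherwise $C_P(Z)$ minus $L$ would be a curve of degree $m_Z$ through $Z$ with multiplicity $m_Z-1$ at $P$, contradicting the minimality of $m_Z$ (here I would use that $m_Z = \min\{j : [I_{Z+jP}]_{j+1}\neq 0\}$). This simultaneously bounds the number of linear components by $|Z\setminus Z'|$ and forces $D$ to have degree $m_Z+1-|Z\setminus Z'| = m_{Z'}+1$. (2) \emph{$D$ is unexpected for $Z'$.} Check that $D$ passes through the remaining points $Z'$ and has multiplicity $\deg D - 1 = m_{Z'}$ at $P$ (the multiplicity at $P$ of $C_P(Z)$ is $m_Z$, distributed one unit to each line and the rest to $D$), so $D\in [I_{Z'+m_{Z'}P}]_{m_{Z'}+1}$, and that this space is nonzero in degree $m_{Z'}+1$; together with the fact that $Z'$ inherits $m_{Z'}<t_{Z'}$ (which needs a short Hilbert-function argument, since removing points can only decrease $t$), Theorem~\ref{range unexpected curves} gives that $Z'$ admits an unexpected curve, and $\deg D = m_{Z'}+1$ identifies $D$ with the unique such minimal-degree curve $C_P(Z')$. (3) \emph{Uniqueness of $Z'$.} Since $C_P(Z)$ is unique (by \cite[Corollary 5.5]{CHMN}) and its decomposition into irreducible components is intrinsic, the set of points of $Z$ lying off the distinguished component is determined, hence so is $Z'$. (4) \emph{Rationality and smoothness.} Project $D$ from the point $P$ of multiplicity $m_{Z'}=\deg D-1$: a plane curve of degree $d$ with a point of multiplicity $d-1$ is automatically rational (the projection from that point is a degree-one map to $\PP^1$), and a general-position hypothesis on $P$ together with the irreducibility of $D$ rules out further singularities away from $P$; this is essentially \cite[Theorem 5.9]{CHMN}, so I would cite it rather than reprove the delicate genericity argument.

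The main obstacle I anticipate is Step (2), specifically verifying that $Z'$ still satisfies the hypothesis $m_{Z'}<t_{Z'}$ of Theorem~\ref{range unexpected curves} and that the irreducible curve $D$ is genuinely \emph{unexpected} for $Z'$ rather than merely an ordinary member of a non-special linear system. One has to rule out the possibility that after removing the linear components the expected dimension $\binom{m_{Z'}+1}{2}$ already ``uses up'' all the conditions, i.e.\ that $\dim[I_{Z'}]_{m_{Z'}+1} > \binom{m_{Z'}+1}{2}$ fails; this is where the precise bookkeeping of how the points of $Z$ distribute among the components, and how the Hilbert function of $Z'$ compares to that of $Z$, must be done carefully. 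A secondary subtlety is making sure that no point of $Z$ gets counted on two different linear components (so that the count $|Z\setminus Z'|$ of lines is exact, not just an inequality) — this uses generality of $P$ to ensure the lines through $P$ are distinct and each meets $Z$ in the minimal possible way. Once these combinatorial and cohomological points are nailed down, the rationality and smoothness assertions follow formally from the projection argument and from \cite[Theorem 5.9]{CHMN}.
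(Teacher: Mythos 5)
The survey states this result without proof, quoting it directly from \cite[Theorem 5.9]{CHMN}, so there is no in-house argument to compare against; judged on its own terms, your outline has the right overall shape (peel off the linear components, identify the residual irreducible curve with $C_P(Z')$, get rationality by projecting from the multiplicity-$(\deg-1)$ point), but it is circular at its core. You take as given that $C_P(Z)$ has exactly one irreducible component $D$ of degree greater than one and that all other components are lines through $P$. That decomposition \emph{is} the main structural content of the theorem: \cite[Corollary 5.5]{CHMN} only reduces unexpected curves of degree larger than $m_Z+1$ to $C_P(Z)$ plus lines, and the sentence ``any unexpected curve has exactly one irreducible component of degree greater than one'' in the surrounding text is itself a consequence of Theorem 5.9, not an independent input. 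The circularity becomes explicit in your step (4), where you propose to cite \cite[Theorem 5.9]{CHMN} --- the statement under proof --- for smoothness away from $P$. Ironically, that last part is the easy part: once $D$ is known to be irreducible of degree $e$ with $\operatorname{mult}_P D=e-1$, the genus count $0\le g_{\mathrm{geom}}(D)\le\binom{e-1}{2}-\binom{e-1}{2}-\sum_{p\ne P}\delta_p$ forces $g_{\mathrm{geom}}(D)=0$ and $\delta_p=0$ for all $p\ne P$, with no citation needed. The component structure itself can in fact be extracted elementarily from a multiplicity count at $P$ (the deficiencies $\deg C_i-\operatorname{mult}_PC_i\ge 0$ of the components sum to at most $(m_Z+1)-m_Z=1$, and any irreducible non-cone of degree $\ge 2$ has deficiency $\ge 1$), but one must then still rule out the all-lines alternative via Theorem~\ref{thm:geomVersion} and exclude multiple components; none of this appears in your write-up.

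The second gap is the one you flag yourself, and it is where the real work in \cite{CHMN} lives: showing that $D$ is genuinely \emph{unexpected} for $Z'$, i.e.\ that $m_{Z'}<t_{Z'}$. Your proposed ``short Hilbert-function argument, since removing points can only decrease $t$'' points in the wrong direction: deleting the $k=|Z\setminus Z'|$ points gives $t_{Z'}\le t_Z$ while $m_{Z'}=m_Z-k$, and $\dim[I_{Z'}]_{j+1}$ can exceed $\dim[I_Z]_{j+1}$ by as much as $k$, so there is no a priori reason $t_{Z'}$ stays above $m_{Z'}$. In \cite{CHMN} this step is controlled not by Hilbert functions alone but by the splitting type $(a_Z,b_Z)$ of the dual line arrangement and the Faenzi--Vall\`es correspondence (cf.\ Lemma~\ref{lem:invariants}), tracking how $a_Z$ drops by exactly one each time a point lying on a linear component is removed. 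Without some such input, your outline establishes only that $D$ contains $Z'$, has degree $m_{Z'}+1$ and multiplicity $m_{Z'}$ at $P$ --- not that the corresponding linear system is larger than expected.
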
 

Note that it follows that any unexpected curve $C$ of $Z$ of degree greater than $m_Z +1$ is always reducible. It is the union of lines and an irreducible curve $C'$, where $C'$ is an unexpected curve of a subset $Z' \subseteq Z$.

The results on the structure of unexpected curves led to the following more geometric version of Theorem \ref{range unexpected curves}:

\begin{theorem}[{\cite[Theorem 1.2]{CHMN}}]
   \label{thm:geomVersion}
Let $Z \subset \PP^2$ be a finite set of points. Then 
$Z$ admits an unexpected curve if and only if
$2m_Z+2<|Z|$ and no subset of $m_Z+2$ (or more) of the points is collinear.
In this case, $Z$ has an 
unexpected curve of degree $j$ if and only if $m_Z < j \le |Z|-m_Z-2$. 
\end{theorem}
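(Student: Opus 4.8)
The plan is to deduce the statement from Theorem~\ref{range unexpected curves} together with the structural results quoted above. By Theorem~\ref{range unexpected curves} it suffices to prove two assertions: \textbf{(A)} the inequality $m_Z<t_Z$ is equivalent to ``$2m_Z+2<|Z|$ and no $m_Z+2$ of the points of $Z$ are collinear''; and \textbf{(B)} when these conditions hold, $u_Z=|Z|-m_Z-2$, so that the range $m_Z<j\le u_Z$ becomes $m_Z<j\le |Z|-m_Z-2$. Throughout write $h_Z$ for the Hilbert function of $Z$ and $\ell_{\max}$ for the largest number of collinear points of $Z$.

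For \textbf{(A)} I would first rewrite $t_Z$ in terms of $h_Z$. Since $\dim[I_Z]_{j+1}=\binom{j+3}{2}-h_Z(j+1)$ and $\binom{j+3}{2}-\binom{j+1}{2}=2j+3$, the definition of $t_Z$ says $t_Z=\min\{j : h_Z(j+1)\le 2j+2\}$; hence $m_Z<t_Z$ is exactly the condition $h_Z(k)\ge 2k+1$ for all $1\le k\le m_Z+1$. The implication ``$\Rightarrow$'' is the easy half: taking $k=m_Z+1$ gives $|Z|\ge h_Z(m_Z+1)\ge 2m_Z+3$, and if some line $L$ contained $e:=|Z\cap L|\ge m_Z+2$ points of $Z$ one derives a contradiction from three facts. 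Restricting forms of $I_Z$ to $L$ gives $\Delta h_Z(k)\ge\min(k+1,e)$, which (as $\Delta h_Z(k)\le k+1$ always) forces $h_Z(m_Z+1)=\binom{m_Z+3}{2}$. A B\'ezout argument applied to the unique unexpected curve $C_P(Z)$, of degree $m_Z+1$ with multiplicity $m_Z$ at a general point $P$, shows $L\subseteq C_P(Z)$, and the residual curve, of degree $m_Z$ with multiplicity $m_Z$ at $P$, must be a union of $m_Z$ lines through $P$, so $|Z|\le e+m_Z$. Finally the containment $\ell_L\cdot I_{Z\setminus L}\subseteq I_Z$ yields $h_Z(m_Z+1)\le (m_Z+2)+|Z|-e$. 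Combining the three inequalities gives $\binom{m_Z+2}{2}\le m_Z$, which is impossible.

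The implication ``$\Leftarrow$'' is where the real work lies. Assume the two geometric conditions but $m_Z\ge t_Z$; one checks first that then $t_Z\ge 1$. From $h_Z(t_Z+1)\le 2t_Z+2$ and $h_Z(t_Z)\ge 2t_Z+1$ one gets $\Delta h_Z(t_Z+1)\le 1$. If $h_Z$ is already constant in degrees $\ge t_Z$ (the case $\Delta h_Z(t_Z+1)=0$), then $|Z|\le 2t_Z+2\le 2m_Z+2$, contradicting the size hypothesis. Otherwise, since the first difference $\Delta h_Z$ is an $O$-sequence in two variables, it equals $1$ in every degree from $t_Z+1$ up to the regularity $\sigma$ of $Z$, with $\sigma\ge t_Z+2$. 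Here I would invoke the classical description of Hilbert functions of points in $\PP^2$, relating such a maximal constant ``tail'' of $\Delta h_Z$ to a collinear subset (cf.\ the work of Davis and of Bigatti--Geramita--Migliore on extremal behaviour), which forces at least $\sigma$ of the points of $Z$ to lie on a line. Since $|Z|=h_Z(t_Z)+(\sigma-t_Z)\le 2t_Z+1+(\sigma-t_Z)$, this gives $\ell_{\max}\ge\sigma\ge |Z|-t_Z-1\ge |Z|-m_Z-1\ge m_Z+2$, contradicting the collinearity hypothesis. Pinning down the precise form of this Hilbert-function input and arranging the several inequalities with no slack is the main obstacle.

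For \textbf{(B)} I would use the structure of unexpected curves. For each $d$ with $m_Z+1\le d\le u_Z$, Theorem~\ref{thm:unexp curve structure} (together with the fact that over the algebraically closed field $K$ every binary form of degree $e$ splits into $e$ linear factors) shows that $[I_{Z+(d-1)P}]_d$ is spanned by the products of a defining form of $C_P(Z)$ with the degree-$(d-m_Z-1)$ forms supported on the pencil of lines through $P$, so $\dim[I_{Z+(d-1)P}]_d=d-m_Z$. Taking $d=u_Z$: unexpectedness of this system, combined with $h_Z(u_Z)\le |Z|$, forces $u_Z<|Z|-m_Z-1$, i.e.\ $u_Z\le|Z|-m_Z-2$. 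On the other hand, multiplying a defining form of $C_P(Z)$ by the $(u_Z-m_Z+1)$-dimensional space of degree-$(u_Z-m_Z)$ forms vanishing to order $u_Z-m_Z$ at $P$ exhibits a subspace of $[I_{Z+u_ZP}]_{u_Z+1}$ of dimension $u_Z-m_Z+1$; since by definition of $u_Z$ this space has dimension $\binom{u_Z+3}{2}-\binom{u_Z+1}{2}-|Z|=2u_Z+3-|Z|$, we get $2u_Z+3-|Z|\ge u_Z-m_Z+1$, i.e.\ $u_Z\ge |Z|-m_Z-2$. Hence $u_Z=|Z|-m_Z-2$, and Theorem~\ref{range unexpected curves} yields the asserted degree range. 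Finally one must dispatch the degenerate cases separately — most notably $Z$ collinear, where $m_Z=t_Z=0$ and both sides of the equivalence fail — which is routine.
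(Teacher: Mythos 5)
Your overall strategy is sound and your conclusion is correct, but you take a genuinely different route from the one the paper (following \cite{CHMN}) has in mind, and one intermediate step in your forward implication is false as stated. The intended route is almost entirely arithmetic in the splitting type of the dual line arrangement: by \Cref{lem:invariants}, $m_Z=a_Z$ and $u_Z=b_Z-1$, and since $a_Z+b_Z=|Z|-1$ one gets $u_Z=|Z|-m_Z-2$ \emph{unconditionally}, which disposes of your part (B) in one line (your longer derivation via \Cref{thm:unexp curve structure} is correct but uses far heavier machinery than needed); likewise $2m_Z+2<|Z|$ is equivalent to $m_Z<u_Z$ by the same arithmetic, so the only genuinely geometric content is the link between large collinear subsets and $t_Z$. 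For that link, your use of Davis' theorem (maximal growth of $\Delta h_Z$ producing a line containing at least $\sigma+1$ points of $Z$) in the direction ``no large collinear subset $\Rightarrow m_Z<t_Z$'' is a legitimate alternative to the arguments of \cite{CHMN}, and the chain of inequalities you set up does close without slack.

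The flawed step: in the forward implication you assert that $e\ge m_Z+2$ collinear points force $\Delta h_Z(k)\ge\min(k+1,e)$, hence $h_Z(m_Z+1)=\binom{m_Z+3}{2}$. This is false (for $e$ collinear points alone one has $\Delta h_Z\equiv 1$ in the relevant range), and the conclusion would even contradict $[I_Z]_{m_Z+1}\ne 0$, which holds by the very definition of $m_Z$; a contradiction derived from it therefore proves nothing. Fortunately your other two facts already suffice: B\'ezout plus the cone structure of the residual give $|Z|\le e+m_Z$, the inclusion $\ell_L\cdot I_{Z\setminus L}\subseteq I_Z$ gives $h_Z(m_Z+1)\le(m_Z+2)+|Z|-e$, and combining these yields $h_Z(m_Z+1)\le 2m_Z+2$, directly contradicting the bound $h_Z(m_Z+1)\ge 2m_Z+3$ established at the start of that paragraph. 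Delete the false claim and the argument is complete.
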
 

Note that Theorem \ref{range unexpected curves} shows that the existence of an unexpected curve forces $m_Z < u_Z$. 
The converse is almost but not quite true. In fact, \cite[Lemma 3.5(c)]{CHMN} shows that  $2m_Z+2<|Z|$ is equivalent to $m_Z<u_Z$.
Thus $m_Z<u_Z$ together with there being no large collinear subsets of $Z$
implies the occurrence of unexpected curves.

As mentioned above, there is a connection between unexpected curves and line arrangements. Recall that a line arrangement $\mathcal A$ of $\PP^2$ is a union of distinct lines. Suppose that $\mathcal A$ consists of $d$ lines defined by linear forms $\ell_1,\ldots,\ell_d \in K[x,y,z] = R$. Then we refer to $\mathcal A$ as the line arrangement $\mathcal A (f)$ defined by the product $f = \ell_1 \cdots \ell_d$. 

Define the \emph{Jacobian ideal} of $f \in R$ as $\Jac (f) = (f, f_x, f_y, f_z)$. Since we assume that the characteristic of $K$ is zero one has $xf_x+yf_y+zf_z = d f$, and so $\Jac (f) = (f_x, f_y, f_z)$. Thus there is a short exact sequence of graded $R$-modules 
\[
0 \to \Syz(\Jac(f)) \to R^3 (-d+1) \stackrel{[ f_x f_y f_z]}{\longrightarrow} \Jac (f) \to 0,  
\]
where $\Syz(\Jac(f))$ denotes the syzygy module of $\Jac (f)$. It is a reflexive $R$-module of rank two. The line arrangement $\mathcal A (f)$ is said to be \emph{free} if $\Syz(\Jac(f))$ is a free $R$-module. Since we are in $\mathbb P^2$, this is the case if and only if the Jacobian ideal $\Jac (f)$ is saturated. Terao conjectured in 1980 (cf.\ \cite[Conjecture 4.138]{OT})  that freeness of line arrangements (even of hyperplane arrangements)
is a combinatorial property, that is, it depends only on the incidence lattice of the lines (or hyperplanes) in $\mathcal A (f)$. This conjecture is still open and has motivated much work in the theory of line arrangements. 

It is useful to recall that the syzygy module of $\Jac (f)$ admits another description. Define the submodule 
$D(f)\subset R\frac{\partial}{\partial x}\oplus R\frac{\partial}{\partial y}\oplus R\frac{\partial}{\partial z}\cong R^3$ 
to be the set of  $K$-linear derivations $\delta$ such that $\delta(f)\in Rf$.
In particular,  $D(f)$ contains the Euler derivation 
$\delta_E =x\frac{\partial}{\partial x}+y\frac{\partial}{\partial y}+z\frac{\partial}{\partial z}$.  The quotient  module $D_0(f)=D(f)/R\delta_E$ is called the module of derivations of $\mathcal A (f)$. It is isomorphic to $\Syz(\Jac(f))$ up to a degree shift. More precisely, there is an isomorphism of graded $R$-modules 
$D_0 (f)  \cong \Syz (\Jac(f)) (d-1)$ (see, e.g., \cite[Remark A.3]{CHMN}). Since $\Syz (\Jac(f))$ is reflexive, the sheafification $\mathcal D_f$ of $D_0(f)$ is a vector bundle of rank two on $\PP^2$,   called the {\em derivation bundle} of $\mathcal A (f)$. Thus, its restriction to a general  line $L \cong \PP^1$ splits as a direct sum of line bundles  $\mathcal O_{\PP^1}(-a_f) \oplus \mathcal O_{\PP^1}(-b_f)$ for positive integers $a_f, b_f$ satisfying $a_f + b_f = \deg f -1 = d-1$. The ordered pair $(a_f, b_f)$, with $a_f \le b_f$,  is called the {\em splitting type} of $\mathcal A (f)$. 

Projective duality relates any finite set $Z \subset \PP^2$ to a line arrangement. In more detail, any point $[a : b : c] \in \PP^2$ corresponds to a line in the dual projective space defined by $ax + by + cz \in R$. Suppose that $Z$ consists of $d$ points. Using this duality, $Z$ defines a line arrangement $\mathcal A \subset \PP^2$ of $d$ lines that we denote by  $\mathcal A(Z)$.  Similarly, we write $(a_Z, b_Z)$ for  the splitting type of this line arrangement, and so $a_Z \le b_Z$ and $a_Z + b_Z = d-1$. The splitting type is intimately related to the invariants appearing in \Cref{range unexpected curves} that govern the degrees of unexpected curves of $Z$.

\begin{lemma}[{\cite[Lemma 3.5]{CHMN}}]
          \label{lem:invariants}
For any finite set $Z \subset \mathbb P^2$, one has the following relations: 
\begin{itemize}
\item[(a)] $m_Z = a_Z$. 


\item[(b)]  $u_Z = b_Z-1$ (hence $m_Z-1\leq u_Z$).


\item[(c)] $ m_Z \le t_Z \le \big  \lfloor \frac{|Z|-1}{2} \big  \rfloor$.

\item[(d)] If $m_Z<t_Z$, then $t_Z\leq u_Z$.

\end{itemize}
\end{lemma}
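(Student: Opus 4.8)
The plan is to reduce all four statements to one identity that converts the fat-point linear systems $[I_{Z+jP}]_{j+1}$ into the splitting behaviour of a derivation bundle, and then to extract (a)--(d) by elementary arithmetic. Write $\mathcal D$ for the derivation bundle of $\mathcal A(Z)$, let $P\in\PP^2$ be general, and let $L$ be the line of the dual plane corresponding to $P$; by genericity $\mathcal D|_L\cong\mathcal O_{\PP^1}(-a_Z)\oplus\mathcal O_{\PP^1}(-b_Z)$. The identity I would use is
\[
\dim_K[I_{Z+jP}]_{j+1}\;=\;h^0\bigl(L,\,\mathcal D|_L(j)\bigr)\;=\;\max\{0,\,j+1-a_Z\}+\max\{0,\,j+1-b_Z\}\qquad(j\ge0).
\]
This is the real content of \Cref{lem:invariants}; everything afterwards rests only on the two arithmetic facts $\binom{j+3}{2}-\binom{j+1}{2}=2j+3$ and $a_Z+b_Z=|Z|-1$. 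The identity is the derivation-bundle dictionary underlying this whole circle of ideas: having multiplicity $\ge j$ at the general point $P$ lets one write $F\in[I_{Z+jP}]_{j+1}$, in coordinates with $P$ a coordinate vertex, as $F=zG+H$ with $\deg G=j$ and $\deg H=j+1$, and the pairs $(G,H)$ for which $zG+H$ also vanishes on $Z$ are precisely the sections of $\mathcal D|_L(j)$. Establishing this identity (for which I would invoke the theory of logarithmic bundles of Faenzi and Vall\'es \cite{FV}, as in \cite{CHMN}) is where the genuine work lies, and I expect it to be the main obstacle.

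Granting the identity, part (a) is immediate: since $a_Z\le b_Z$, the right-hand side is nonzero exactly when $j+1-a_Z\ge1$, i.e.\ when $j\ge a_Z$, so $m_Z=a_Z$. For part (b), the target value in the definition of $u_Z$ is $\binom{j+3}{2}-\binom{j+1}{2}-|Z|=2j+3-|Z|=(j+1-a_Z)+(j+1-b_Z)$, and the right-hand side of the identity equals this exactly when both summands are nonnegative, i.e.\ (since $a_Z\le b_Z$) when $j\ge b_Z-1$; hence $u_Z=b_Z-1$, and therefore $m_Z-1=a_Z-1\le b_Z-1=u_Z$.

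For part (c), the lower bound $m_Z\le t_Z$ follows from the estimate $\dim[I_Z]_{j+1}-\binom{j+1}{2}\le\dim[I_{Z+jP}]_{j+1}$ recorded just before \Cref{range unexpected curves} (with multiplicity $m=j$): if $j<m_Z$ then the right-hand side vanishes, so $\dim[I_Z]_{j+1}\le\binom{j+1}{2}$ and hence $j<t_Z$. For the upper bound, set $j_0=\bigl\lfloor\tfrac{|Z|-1}{2}\bigr\rfloor$; checking the two parities of $|Z|$ gives $2j_0+3-|Z|\ge1$, and since the Hilbert function of $Z$ never exceeds $|Z|$ we get $\dim[I_Z]_{j_0+1}=\binom{j_0+3}{2}-h_Z(j_0+1)\ge\binom{j_0+3}{2}-|Z|=\binom{j_0+1}{2}+(2j_0+3-|Z|)>\binom{j_0+1}{2}$, so $t_Z\le j_0$.

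Part (d) is then purely formal. Assume $m_Z<t_Z$. By part (c), $t_Z\le\bigl\lfloor\tfrac{|Z|-1}{2}\bigr\rfloor=\bigl\lfloor\tfrac{a_Z+b_Z}{2}\bigr\rfloor\le b_Z$. Were $t_Z=b_Z$, the inequality $b_Z\le\bigl\lfloor\tfrac{a_Z+b_Z}{2}\bigr\rfloor$ would force $a_Z=b_Z$, and then $m_Z=a_Z=b_Z=t_Z$, contradicting $m_Z<t_Z$; hence $t_Z\le b_Z-1=u_Z$. So the whole argument hinges on the identity of the first paragraph — the correspondence between fat-point systems on $\PP^2$ and the derivation bundle of the dual line arrangement — while parts (a)--(d) are reduced to elementary arithmetic once that correspondence is in place.
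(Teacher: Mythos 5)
Your argument is correct and is essentially the proof from \cite{CHMN} (the survey itself states the lemma with only the citation \cite[Lemma 3.5]{CHMN} and gives no proof): the identity $\dim[I_{Z+jP}]_{j+1}=\max\{0,\,j+1-a_Z\}+\max\{0,\,j+1-b_Z\}$ is precisely the Faenzi--Vall\`es dictionary \cite{FV} that \cite{CHMN} establishes before deducing this lemma, and your extraction of (a)--(d) from it --- including the use of $a_Z+b_Z=|Z|-1$, the bound $h_Z\le|Z|$ for the upper estimate on $t_Z$, and the parity argument ruling out $t_Z=b_Z$ in (d) --- all checks out.
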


Using the connection to line arrangements, one obtains configurations of points that admit unexpected curves. 

\begin{example}[{\cite[Example 6.1]{CHMN}}]
    \label{H19Example}
Consider the line arrangement $\mathcal A$ given by the lines defined by the following 19 linear forms:
$x$, $y$, $z$, $x+y$, 
$x-y$, $2x+y$, $2x-y$, $x+z$, 
$x-z$, $y+z$, $y-z$, $x+2z$, 
$x-2z$, $y+2z$, $y-2z$, $x-y+z$, 
$x-y-z$, $x-y+2z$, $x-y-2z$, shown in Figure \ref{H19Fig}.
Let $Z$ be the corresponding set consisting of the 19 points dual to the lines,
sketched in Figure \ref{dualH19Fig}.

\begin{figure}[htbp]
\centering
\begin{tikzpicture}[line cap=round,line join=round,>=triangle 45,x=.5cm,y=.5cm]
\clip(-5.44,-4.76) rectangle (6.16,4.66);
\draw [domain=-5.44:6.16] plot(\x,{(-0.0-1.0*\x)/-1.0});
\draw (0.0,-4.76) -- (0.0,4.66);
\draw [domain=-5.44:6.16] plot(\x,{(-0.0-0.0*\x)/1.0});
\draw [domain=-5.44:6.16] plot(\x,{-2.0*\x});
\draw [domain=-5.44:6.16] plot(\x,{(-0.0--2.0*\x)/1.0});
\draw [domain=-5.44:6.16] plot(\x,{(-0.0--1.0*\x)/-1.0});
\draw [domain=-5.44:6.16] plot(\x,{(-2.0-0.0*\x)/-1.0});
\draw [domain=-5.44:6.16] plot(\x,{(-1.0-0.0*\x)/-1.0});
\draw [domain=-5.44:6.16] plot(\x,{(--1.0--0.0*\x)/-1.0});
\draw [domain=-5.44:6.16] plot(\x,{(--2.0--0.0*\x)/-1.0});
\draw (-2.0,-4.76) -- (-2.0,4.66);
\draw (-1.0,-4.76) -- (-1.0,4.66);
\draw (1.0,-4.76) -- (1.0,4.66);
\draw (2.0,-4.76) -- (2.0,4.66);
\draw [domain=-5.44:6.16] plot(\x,{(--1.0--1.0*\x)/1.0});
\draw [domain=-5.44:6.16] plot(\x,{(--2.0--1.0*\x)/1.0});
\draw [domain=-5.44:6.16] plot(\x,{(-1.0--1.0*\x)/1.0});
\draw [domain=-5.44:6.16] plot(\x,{(-4.0--2.0*\x)/2.0});
\end{tikzpicture}
\caption{A configuration of 19 lines (the line at infinity, $z=0$, is not shown).}
\label{H19Fig}
\end{figure}
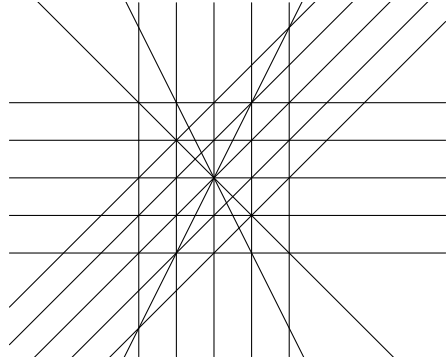

\begin{figure}[htbp]
\centering
\begin{tikzpicture}[line cap=round,line join=round,>=triangle 45,x=1.0cm,y=1.0cm]
\clip(-1.463688897430969,-0.9982420398488032) rectangle (4.481599077455316,3.829741815619195);
\draw (0.0,-0.9982420398488032) -- (0.0,3.829741815619195);
\draw [domain=-1.463688897430969:4.481599077455316] plot(\x,{(--9.0-3.0*\x)/3.0});
\draw [domain=-1.463688897430969:4.481599077455316] plot(\x,{(-0.0-0.0*\x)/3.0});
\draw [domain=-1.463688897430969:4.481599077455316] plot(\x,{(-0.0--1.58*\x)/1.42});
\draw (3.6615593567813454,0.5) node[anchor=north west] {$x$};
\draw (0.5249074252034089,2.845694150810431) node[anchor=north west] {$z$};
\draw (0.05,3.8) node[anchor=north west] {$y$};
\draw (2.1,2.5586802485745417) node[anchor=north west] {$x+y$};
\begin{scriptsize}
\draw [fill=black] (0.0,0.0) circle (1.5pt);
\draw [fill=black] (0.0,3.0) circle (1.5pt);
\draw [fill=black] (3.0,0.0) circle (1.5pt);
\draw [fill=black] (0.0,0.7) circle (1.5pt);
\draw [fill=black] (0.0,1.26) circle (1.5pt);
\draw [fill=black] (0.0,1.76) circle (1.5pt);
\draw [fill=black] (0.0,2.24) circle (1.5pt);
\draw [fill=black] (1.42,1.58) circle (1.5pt);
\draw [fill=black] (0.91,2.09) circle (1.5pt);
\draw [fill=black] (1.86,1.140) circle (1.5pt);
\draw [fill=black] (2.35,0.65) circle (1.5pt);
\draw [fill=black] (0.38753589789044496,0.43120191455415724) circle (1.5pt);
\draw [fill=black] (0.7492696330437865,0.8336943804290022) circle (1.5pt);
\draw [fill=black] (1.0403935472433967,1.157620989186315) circle (1.5pt);
\draw [fill=black] (1.7896631802871834,1.9913153696153176) circle (1.5pt);
\draw [fill=black] (0.54,0.0) circle (1.5pt);
\draw [fill=black] (1.12,0.0) circle (1.5pt);
\draw [fill=black] (1.66,0.0) circle (1.5pt);
\draw [fill=black] (2.22,0.0) circle (1.5pt);
\end{scriptsize}
\end{tikzpicture}
\caption{A sketch of the points dual to the lines of the line configuration given in Figure \ref{H19Fig}.}
\label{dualH19Fig}
\end{figure}
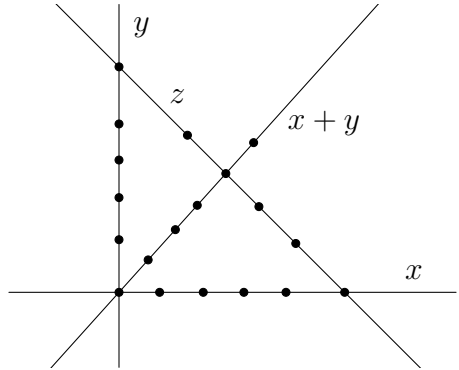

\noindent 
Using a computer algebra system,  one can show that the splitting type of $\mathcal A$ is $(8, 10)$.  Thus, $Z$ admits an unexpected curve only 
in degree 9. This unexpected curve is not irreducible.  Indeed, it has two components, one of 
which is a line  joining the general point to the point $[2 : 1 : 0]$. 

\end{example}

The properties of the set $Z$ are sensitive to slight modifications of $Z$. 

\begin{example}[{\cite[Example 6.1]{CHMN}}]
    \label{example20}
Based on computer experiments, it turns out that  the arrangement of  \Cref{H19Example} is not free, but that if we either (i) remove $2x+y$ alone or (ii) replace $2x+y$ by $2y-x$ or (iii) add $(2y-x)$ to the configuration of 19 lines, these new configurations (see \Cref{H20Fig}) are free with splitting type (respectively) $(7,10)$,  $(7, 11)$ or $(8,11)$.

\begin{figure}[htbp] 
\centering
\begin{tikzpicture}[line cap=round,line join=round,>=triangle 45,x=.5cm,y=.5cm]
\clip(-5.44,-4.76) rectangle (6.16,4.66);
\draw [domain=-5.44:6.16] plot(\x,{(-0.0-1.0*\x)/-1.0});
\draw (0.0,-4.76) -- (0.0,4.66);
\draw [domain=-5.44:6.16] plot(\x,{(-0.0-0.0*\x)/1.0});
\draw [dash pattern=on 5pt off 2pt,domain=-5.44:6.16] plot(\x,{-2.0*\x});
\draw [domain=-5.44:6.16] plot(\x,{(-0.0--2.0*\x)/1.0});
\draw [domain=-5.44:6.16] plot(\x,{(-0.0--1.0*\x)/-1.0});
\draw [domain=-5.44:6.16] plot(\x,{(-2.0-0.0*\x)/-1.0});
\draw [domain=-5.44:6.16] plot(\x,{(-1.0-0.0*\x)/-1.0});
\draw [domain=-5.44:6.16] plot(\x,{(--1.0--0.0*\x)/-1.0});
\draw [domain=-5.44:6.16] plot(\x,{(--2.0--0.0*\x)/-1.0});
\draw [dash pattern=on 2pt off 5pt] (-5.44,-2.72) -- (6.16,3.08);
\draw (-2.0,-4.76) -- (-2.0,4.66);
\draw (-1.0,-4.76) -- (-1.0,4.66);
\draw (1.0,-4.76) -- (1.0,4.66);
\draw (2.0,-4.76) -- (2.0,4.66);
\draw [domain=-5.44:6.16] plot(\x,{(--1.0--1.0*\x)/1.0});
\draw [domain=-5.44:6.16] plot(\x,{(--2.0--1.0*\x)/1.0});
\draw [domain=-5.44:6.16] plot(\x,{(-1.0--1.0*\x)/1.0});
\draw [domain=-5.44:6.16] plot(\x,{(-4.0--2.0*\x)/2.0});
\end{tikzpicture}
\caption{A configuration of 20 lines (the line at infinity is not shown).}
\label{H20Fig}
\end{figure}
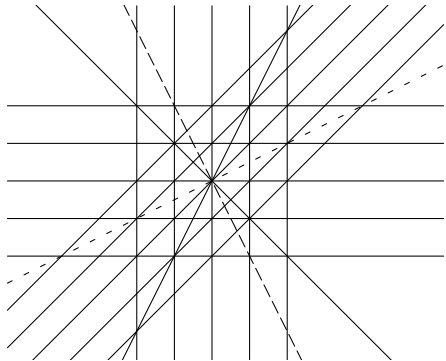

On the dual side, taking $Z$ from Example \ref{H19Example}, if we set $Z_1 = Z \backslash \{ [2,1,0] \}$ and $Z_2 = Z_1 \cup \{ [-1,2,0] \}$ and $Z_3 = Z \cup \{ [-1,2,0] \}$, the mentioned splitting types give $m_{Z_1} = m_{Z_2} = 7$ and $m_Z = m_{Z_3} = 8$. Considering the Hilbert functions, one computes $t_{Z_1} = 8, t_Z = t_{Z_2} = t_{Z_3} = 9$. Hence \Cref{range unexpected curves}  shows that $Z_1, Z_2$ and $Z_3$ have unexpected curves of degree $8, 8$ and $9$, respectively. Moreover,  the unexpected degree 8  curve  for $Z_1$ is irreducible and coincides with the unexpected degree 8 curve for $Z_2$, while the unexpected degree 9 curve for $Z_3$ coincides with that for $Z$ and is not irreducible. As the general point $P$ varies, all unexpected curves for $Z_3$ also contain $[-1,2,0]$.

\end{example}

The connection to line arrangements allows one to establish that a generic finite set of points does not admit any unexpected curve. In fact, there is a more precise result. 

\begin{proposition}[{\cite[Corollary 6.8]{CHMN}}]
      \label{prop:dZ lin gen position} 
If $Z \subset \PP^2$ is a finite set of points such that no three of its points are collinear then $Z$ does not admit any unexpected curve. 
\end{proposition}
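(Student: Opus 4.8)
The plan is to reduce the statement, via \Cref{range unexpected curves} and \Cref{lem:invariants}(c), to a vanishing assertion, and then to prove that vanishing by directly analyzing a putative unexpected curve. Since \Cref{lem:invariants}(c) always gives $m_Z\le t_Z\le\lfloor(|Z|-1)/2\rfloor$, \Cref{range unexpected curves} reduces the claim to the single inequality $m_Z\ge\lfloor(|Z|-1)/2\rfloor$: indeed, this forces $m_Z=t_Z$, so $Z$ admits no unexpected curve. Unwinding the definition of the multiplicity index, this is equivalent to showing that
\[
[I_{Z+jP}]_{j+1}=0\qquad\text{whenever } P\in\mathbb P^2\text{ is general and } |Z|\ge 2j+3 ;
\]
in the language of the dual line arrangement, it says precisely that the splitting type of a generic line arrangement is balanced, $b_Z-a_Z\le 1$.

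So suppose to the contrary that some nonzero $C\in[I_{Z+jP}]_{j+1}$ exists with $|Z|\ge 2j+3$; then $\deg C=j+1$ and $\mathrm{mult}_P C\ge j$. As $P$ is general it lies on none of the finitely many lines joining two points of $Z$, so each line through $P$ contains at most one point of $Z$; and since no three points of $Z$ are collinear, any line contains at most two points of $Z$. Write $C=\Lambda+C''$, where $\Lambda$ is the product of the lines through $P$ occurring in $C$, say $a$ of them, and $C''$ has no component through $P$. A local computation at $P$ then gives $\deg C''=j+1-a=:e$ and $\mathrm{mult}_P C''=e-1$, and $C''$ is either empty, or a single line missing $P$, or an irreducible curve of degree $e\ge 2$ with an $(e-1)$-fold point at $P$. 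In the first two cases $\Lambda$ meets $Z$ in at most $a$ points and $C''$ in at most two, so $|Z|\le a+2\le j+2<2j+3$, a contradiction.

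It therefore remains to bound by $2e$ the number of points of $Z$ on an irreducible curve $C''$ of degree $e\ge 2$ having an $(e-1)$-fold point at the general point $P$; this yields $|Z|\le a+2e=j+1+e\le 2j+2<2j+3$, the desired contradiction. The linear system $\mathcal L=[I_{(e-1)P}]_e$ of degree-$e$ curves singular to order $e-1$ at $P$ has dimension $2e+1$, and a B\'ezout argument shows any $2e$ of the points of $Z$ on $C''$ impose independent conditions on $\mathcal L$: otherwise a pencil in $\mathcal L$ would pass through those $2e$ points, two of its members would meet with total multiplicity at least $(e-1)^2+2e>e^2$ away from a common component, yet the irreducible curve $C''$ shares a component with a degree-$e$ curve only if it equals it. Hence $C''$ is the unique member of $\mathcal L$ through any such set of $2e$ points. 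The remaining difficulty — which I expect to be the heart of the proof — is to rule out a $(2e+1)$-st point of $Z$ on $C''$; equivalently, one must show that $2e+1$ points of $Z$ with no three collinear cannot all lie on a single degree-$e$ curve singular to order $e-1$ at a \emph{general} point, i.e. that such points impose independent conditions on $\mathcal L$. Here the generality of $P$ must genuinely be used; one clean route is to pass to the blow-up of $P$, on which $\mathcal L$ becomes a complete very ample linear system on the Hirzebruch surface $\mathbb F_1$ realized as a surface of minimal degree in $\mathbb P^{2e}$, and then invoke the classical general-position lemma for plane curves (or its analogue for points on a scroll).
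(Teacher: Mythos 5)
Your reduction and the structural analysis are sound, and they follow a genuinely different route from the one behind this statement: the proof of \cite[Corollary 6.8]{CHMN} goes through stability of the derivation bundle of the dual line arrangement and the Grauert--M\"ulich theorem (your reformulation ``the splitting type is balanced, $b_Z-a_Z\le 1$'' is exactly what Grauert--M\"ulich delivers for a semistable rank-two bundle restricted to a general line). The reduction via \Cref{range unexpected curves} and \Cref{lem:invariants}(c), the decomposition $C=\Lambda+C''$, the trichotomy for $C''$ (empty, a line off $P$, or an irreducible degree-$e$ curve with an $(e-1)$-fold point at $P$ --- consistent with \Cref{thm:unexp curve structure}), the elimination of the first two cases, and the B\'ezout argument showing that any $2e$ points of $Z\cap C''$ impose independent conditions on $\mathcal L=[I_P^{e-1}]_e$ are all correct.

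However, the step you yourself flag as ``the heart of the proof'' is a genuine gap, and the route you sketch for closing it does not work. You must show that $2e+1$ points of $Z$, no three collinear, cannot lie on an irreducible degree-$e$ curve with an $(e-1)$-fold point at a \emph{general} point $P$. On the blow-up, $|eH-(e-1)E|$ does embed $\mathbb F_1$ as a surface of minimal degree $2e-1$ in $\PP^{2e}$, but the hyperplane sections of that surface are precisely the members of $\mathcal L$, so ``the $2e+1$ image points span $\PP^{2e}$'' is a verbatim restatement of the claim, not a reduction of it. The classical general position lemma concerns \emph{general} hyperplane sections of an irreducible nondegenerate curve (or points in uniform position); your $2e+1$ points form an arbitrary subset of $Z$ and enjoy no such genericity. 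Moreover the claim is false without the collinearity hypothesis: the $B_3$ configuration gives $9=2e+1$ points (with $e=4$) lying on an irreducible quartic with a triple point at a general vertex, so any correct argument must invoke ``no three collinear'' essentially at exactly this step, whereas your sketch uses it only to exclude two points on a fiber and three on a line of the scroll. Note that the survey explicitly remarks that a direct proof ``using just properties of sets of points'' would be interesting to have; what remains of your argument is precisely that missing direct ingredient, and the B\'ezout/minimal-degree formalism alone does not supply it.
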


The proof utilizes stability properties of derivation bundles and the Grauert-M\"ulich theorem \cite{GM}. It would be interesting to have a more direct proof using just properties of sets of points. 

The existence of unexpected curves is related to the presence of other unexpected properties such as, for example, the failure of a Lefschetz property. This connection was introduced in \cite{DIV} and further elaborated in \cite{DI}, which was motivated by the first version of \cite{CHMN}. Recall that, following \cite{HMNW}, a standard graded Artinian $K$-algebra $A$ is said to have the \emph{Strong Lefschetz Property (SLP)} if there is a linear form $\ell \in [A]_1$ such that each multiplication homomorphism $\times \ell^k \colon [A]_j \to [A]_{j+k}$ with integers $j, k \ge 0$ has maximal rank. There are many classes of Artinian algebras that are expected to have the SLP, see, for example, \cite{MN}. However, deciding the presence of the SLP is often a subtle problem. The following result shows that the failure of the SLP can have an interesting reason. 

\begin{theorem}[{\cite[Theorem 7.5]{CHMN}}] 
    \label{SLP condition}
Let $\mathcal A(f)$ be a line arrangement in $\mathbb P^2$ with $f = \ell_1\cdots \ell_d$, and let $Z \subset \PP^2$ be the  dual set of points.  Then $Z$ has an unexpected curve of degree $j+1$ if and only if, for any general linear form $\ell$, the multiplication homomorphism $\times \ell^2 \colon [A]_{j-1} \to [A]_{j+1}$ does not have maximal rank, where  $A = R/(\ell_1^{j+1},\dots,\ell_d^{j+1})$.
\end{theorem}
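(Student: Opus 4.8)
The plan is to translate both sides of the equivalence into the language of apolarity and observe that they become the same numerical statement. Recall the apolarity action of $R=K[x,y,z]$ on itself in which $x,y,z$ act as $\partial_x,\partial_y,\partial_z$; for each $t$ this induces a perfect pairing $R_t\times R_t\to K$, so for a graded subspace $V\subseteq R_t$ its orthogonal complement $V^\perp\subseteq R_t$ satisfies $\dim(R_t/V)=\dim V^\perp$, and $(V+W)^\perp=V^\perp\cap W^\perp$. The one substantial input is the classical description (Emsalem--Iarrobino) of the inverse system of an ideal generated by a power of a linear form: if $P$ is the point of $\PP^2$ dual to $\ell\in R_1$, then for all $a\ge 1$ and all $t$,
\[
\bigl((\ell^a)\bigr)^\perp_t \;=\; [I_{(t-a+1)P}]_t,
\]
where $(t-a+1)P$ is the empty scheme when $t<a$. (After a change of coordinates with $\ell=x$, the left side is $\{g\in R_t:\partial_x^a g=0\}$, i.e.\ the forms of $x$-degree $\le a-1$, which is $(y,z)^{t-a+1}\cap R_t=[I_{(t-a+1)[1:0:0]}]_t$, and $[1:0:0]$ is the point dual to $\{x=0\}$.) Writing $I=(\ell_1^{j+1},\dots,\ell_d^{j+1})$ and $Z=\{P_1,\dots,P_d\}$ for the dual point set, intersecting this formula over the generators (with $a=t=j+1$, so multiplicity $1$) gives $[I]_{j+1}^\perp=\bigcap_i[I_{P_i}]_{j+1}=[I_Z]_{j+1}$, hence $\dim[A]_{j+1}=\dim(R_{j+1}/[I]_{j+1})=\dim[I_Z]_{j+1}$.

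Next I would compute the kernel and cokernel of $\mu:=(\times\ell^2)\colon[A]_{j-1}\to[A]_{j+1}$. Since $I$ is generated in degree $j+1>j-1$ we have $[A]_{j-1}=R_{j-1}$, of dimension $\binom{j+1}{2}$, so $[I]_{j+1}+\ell^2 R_{j-1}=[I]_{j+1}+[(\ell^2)]_{j+1}$ and
\[
\coker\mu \;=\; R_{j+1}\big/\bigl([I]_{j+1}+\ell^2 R_{j-1}\bigr).
\]
Because $(\ell^2 h)\circ g=h\circ(\ell^2\circ g)$ for $g\in R_{j+1}$, $h\in R_{j-1}$ and the pairing $R_{j-1}\times R_{j-1}\to K$ is perfect, $(\ell^2 R_{j-1})^\perp=\bigl((\ell^2)\bigr)^\perp_{j+1}=[I_{jP}]_{j+1}$ by the displayed formula (now with $a=2$, $t=j+1$, so multiplicity $j$), where $P$ is the point dual to $\ell$. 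Taking orthogonal complements of the sum,
\[
\dim\coker\mu \;=\; \dim\bigl([I_Z]_{j+1}\cap[I_{jP}]_{j+1}\bigr) \;=\; \dim[I_{Z+jP}]_{j+1}.
\]
Since $\ell$ is general and projective duality is an isomorphism $\PP^2\cong(\PP^2)^\vee$, $P$ is a general point of $\PP^2$; semicontinuity makes ``general'' well defined on both sides, and $\dim[I_{Z+jP}]_{j+1}$ is exactly the quantity on the left of \eqref{eq:unexpected} in degree $j+1$.

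It remains to assemble this. By rank--nullity,
\[
\dim\ker\mu \;=\; \dim\coker\mu-\bigl(\dim[A]_{j+1}-\dim[A]_{j-1}\bigr) \;=\; \dim[I_{Z+jP}]_{j+1}-\Bigl(\dim[I_Z]_{j+1}-\tbinom{j+1}{2}\Bigr).
\]
Set $c=\dim[I_{Z+jP}]_{j+1}\ge 0$ and $e=\dim[I_Z]_{j+1}-\binom{j+1}{2}$; since $\dim\ker\mu\ge 0$ we get $c\ge e$, hence $c\ge\max\{0,e\}$ (recovering the elementary bound recalled before \eqref{eq:unexpected}), and $\min(\dim\ker\mu,\dim\coker\mu)=\min(c-e,c)=c-\max\{0,e\}$. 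Thus $\mu$ fails to have maximal rank, i.e.\ $\min(\dim\ker\mu,\dim\coker\mu)>0$, if and only if $c>\max\{0,e\}$, which is precisely \eqref{eq:unexpected} in degree $j+1$, i.e.\ $Z$ admits an unexpected curve of degree $j+1$. The main obstacle is getting the apolarity dictionary exactly right: identifying $[I]_{j+1}^\perp$ with $[I_Z]_{j+1}$ and $(\ell^2R_{j-1})^\perp$ with $[I_{jP}]_{j+1}$ rests on the inverse-system computation above, and one must track the projective-duality conventions carefully so that the ``dual point set'' $Z$ really is $\{P_1,\dots,P_d\}$ and the bookkeeping $t-a+1$ yields multiplicity exactly $j$ (not $j\pm 1$) at $P$. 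Once that dictionary is in hand, the remainder is formal linear algebra.
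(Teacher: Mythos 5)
Your argument is correct, and it follows essentially the same route as the original proof in \cite{CHMN} (building on \cite{DIV}): Macaulay apolarity identifies $[A]_{j+1}$ with the dual of $[I_Z]_{j+1}$ and the cokernel of $\times\ell^2$ with the dual of $[I_{Z+jP}]_{j+1}$ via the Emsalem--Iarrobino description of inverse systems of powers of linear forms, after which rank--nullity gives the equivalence with \eqref{eq:unexpected}. The survey itself states the theorem without proof, but your apolarity dictionary, including the multiplicity bookkeeping $t-a+1$ giving $jP$, is exactly right.
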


The combinatorial properties of a line arrangement are captured by its \emph{incidence lattice}.  It consists of all intersections of lines, ordered by reverse inclusion.  For example, if $\mathcal{A} (f)$ and $\mathcal{A} (g)$  are two line arrangements in $\PP^2$ with the same incidence lattice, then it follows that the Jacobian ideals of $f$ and $g$ have the same degree. It is natural to wonder to what extent numerical invariants of a line arrangement are determined by its combinatorial properties. In this spirit, Terao conjectured that freeness of a line arrangement  depends  only on its incidence lattice.

It turns out that there is connection between Terao's conjecture and the multiplication by the square of a general linear form on certain quotient algebras. This was first studied in \cite{DIV}. Subsequent results in \cite{CHMN} led to the following equivalence. 

\begin{proposition}[{\cite[Proposition 7.13]{CHMN}}] 
   \label{prop:Terao equiv} 
The following two conditions are equivalent: 
\begin{itemize}

\item[(a)] Terao's conjecture is true. 

\item[(b)] If $\mathcal A (f)$ is any free line arrangement with splitting type $(a, b)$, then, for every line arrangement $\mathcal A (g)$ with the same incidence lattice as $\mathcal A (f)$, the multiplication map 
\[
[R/J]_{b-2} \stackrel{\times L^2}{\longrightarrow} [R/J]_b
\]
is surjective, where $J = (\ell_1^b,\dots, \ell_{a+b+1}^b, L_1^b,\ldots,L_{b-a}^b)$ with $g = \ell_1 \cdots \ell_{a+b+1}$ and general linear forms $L, L_1,\ldots,L_{b-a} \in R$. 
\end{itemize}
\end{proposition}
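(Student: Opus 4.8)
The plan is to reduce the equivalence to a translation of the unexpected-curve criterion of \Cref{SLP condition} into the language of the splitting type, using \Cref{lem:invariants}. First I would fix a free line arrangement $\mathcal A(f)$ with splitting type $(a,b)$, so that $a + b = d - 1$ where $d = a+b+1$ is the number of lines. Let $Z$ be the dual set of $d$ points; by \Cref{lem:invariants}(a) we have $m_Z = a_Z = a$, and by part (b) we have $u_Z = b_Z - 1 = b - 1$. By \Cref{range unexpected curves} (or directly from the definition in \eqref{eq:unexpected}), $Z$ admits an unexpected curve exactly in the degrees $j$ with $m_Z < j \le u_Z$, i.e. $a < j \le b-1$; writing $j = b - 1$ for the top such degree (the key case, discussed below), the unexpected curve has degree $j+1 = b$. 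The point of choosing the top degree is that, since $\mathcal A(f)$ is free, $a_Z = b_Z$ fails in general and the module of derivations is as balanced as possible, so that the unexpected curve persists exactly up to degree $u_Z = b-1$; the failure of $\times L^2$ at this top spot is what controls freeness.

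Next I would apply \Cref{SLP condition} with $j$ chosen so that $j + 1 = b$, i.e. $j = b - 1$. That theorem says: $Z$ has an unexpected curve of degree $j+1 = b$ if and only if, for a general linear form $L$, the map $\times L^2 \colon [A]_{j-1} \to [A]_{j+1}$ fails to have maximal rank, where $A = R/(\ell_1^{j+1}, \dots, \ell_d^{j+1}) = R/(\ell_1^{b}, \dots, \ell_d^{b})$. Now for an arrangement $\mathcal A(g)$ with the same incidence lattice as $\mathcal A(f)$, write $g = \ell_1 \cdots \ell_{a+b+1}$ and form the ideal $J = (\ell_1^b, \dots, \ell_{a+b+1}^b, L_1^b, \dots, L_{b-a}^b)$ with general linear forms. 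The extra $b - a$ general powers $L_i^b$ are there to rigidify the Hilbert function: adjoining $b - a$ general $b$-th powers to an ideal generated by $d$ general-enough $b$-th powers produces an algebra whose Hilbert function is forced (it becomes that of a general almost complete intersection of the appropriate type), so the parity between "maximal rank of $\times L^2$" and "the expected dimension count" becomes a clean numerical statement. I would check, via a direct Hilbert-function computation in degrees $b-2$ and $b$, that for the ideal $J$ the map $[R/J]_{b-2} \xrightarrow{\times L^2} [R/J]_b$ is surjective if and only if $\times L^2 \colon [A]_{b-2} \to [A]_b$ has maximal rank for the smaller algebra $A$ built from $\mathcal A(g)$ alone — the point being that tensoring with the extra powers and taking the general linear form $L$ makes "maximal rank" equivalent to "surjective" and shifts the relevant dimensions by exactly $\binom{b-a}{?}$-type corrections that cancel.

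Putting these together: Terao's conjecture asserts that freeness is combinatorial, i.e. that $\mathcal A(g)$ is free whenever $\mathcal A(f)$ is and they share an incidence lattice. For the dual points, freeness of $\mathcal A(g)$ is equivalent to $a_{Z(g)} = b_{Z(g)}$ being as balanced as the combinatorics permits, which (via \Cref{lem:invariants} and \Cref{range unexpected curves}) is equivalent to $Z(g)$ having \emph{no} unexpected curve in degree $b$ — equivalently, to $\times L^2$ having maximal rank on $A$, equivalently (by the Hilbert-function comparison) to the displayed map $[R/J]_{b-2} \to [R/J]_b$ being surjective. Since $\mathcal A(f)$ and $\mathcal A(g)$ have the same incidence lattice, the splitting-type data of $\mathcal A(f)$ (which is combinatorial for \emph{free} arrangements, being determined by the degree $d$) supplies the value $b$ used in forming $J$, so the statement "(b) holds for all such $g$" is precisely "$\mathcal A(g)$ is always free", i.e. (a). Conversely (a) $\Rightarrow$ (b) runs the same chain backwards. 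The main obstacle I anticipate is the Hilbert-function bookkeeping in the second paragraph: one must show that adjoining the $b - a$ general $b$-th powers $L_i^b$ does not interfere with the maximal-rank property of $\times L^2$ in the two relevant degrees, and that the resulting surjectivity statement for $R/J$ is genuinely equivalent (not merely implied by) the maximal-rank statement for $A$. This requires knowing the Hilbert function of $R/J$ in degrees $b-2, b-1, b$ precisely, which in turn rests on the behavior of general powers of linear forms — a Fröberg/Anick-type input that is available here because $b$-th powers of $\le b+1$ general linear forms, together with extra general $b$-th powers, form an ideal whose Hilbert function is known. All other steps are direct citations of \Cref{SLP condition}, \Cref{lem:invariants}, and \Cref{range unexpected curves}.
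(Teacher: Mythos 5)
The survey states this proposition without proof, citing \cite[Proposition 7.13]{CHMN}, so your argument can only be measured against what that proof must contain. Your numerical bookkeeping is sound and does match the real mechanism: taking $j=b-1$ in \Cref{SLP condition} produces the map $[A]_{b-2}\to[A]_b$ for $A=R/(\ell_1^b,\dots,\ell_d^b)$, the target exceeds the source in dimension by exactly $b-a$ (when the $d=a+b+1$ powers are linearly independent), and adjoining $b-a$ general $b$-th powers equalizes the two dimensions so that ``maximal rank'' becomes ``surjective.'' That part of the proposal is correct in outline.

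The gap is the pivotal equivalence asserted in your last paragraph: that freeness of $\mathcal A(g)$ is equivalent to $Z(g)$ having no unexpected curve of degree $b$, ``equivalently'' to the splitting type being as balanced as the combinatorics permits. Neither equivalence is true. Freeness means the derivation bundle splits as a direct sum of line bundles (equivalently, $\Jac(g)$ is saturated); this is not a condition on the generic splitting type alone, and it is not detected by the presence or absence of unexpected curves: \Cref{example20} exhibits free arrangements whose dual point sets do admit unexpected curves, and by \Cref{range unexpected curves} no set $Z$ ever admits an unexpected curve of degree $b_Z$ (the range stops at $u_Z=b_Z-1$), so for an arrangement whose splitting type already equals $(a,b)$ your criterion is vacuously satisfied and cannot distinguish free from non-free. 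What the actual proof requires, and what you never supply, is (i) Terao's factorization theorem, guaranteeing that if $\mathcal A(g)$ is free its exponents must equal $(a,b)$ --- these are determined by the characteristic polynomial of the incidence lattice, not by $d$ alone as you state --- and (ii) a genuine freeness criterion valid for arrangements sharing a lattice with a free arrangement: in \cite{CHMN} this is the content of the results leading up to Proposition 7.13, which compare the splitting type of $\mathcal A(g)$, and of the augmented arrangement obtained by adding the $b-a$ general lines $L_i$ (whose dual points are exactly what the extra generators $L_i^b$ encode geometrically, not merely a Hilbert-function ``rigidification''), with the combinatorially forced exponents, and show that equality holds precisely when $\mathcal A(g)$ is free. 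Without a statement of that kind, your chain of equivalences establishes neither implication of the proposition.
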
 

Combining the arguments for its proof and \Cref{lem:invariants} one is led to a conjectural property of the multiplicity index of a set of points that implies Terao's conjecture. 

\begin{proposition}[{\cite[Corollary 7.15]{CHMN}}]
     \label{prop:points to Terao}
If, for sets of  $2k + 1$ points of $\PP^2$, having (maximal) multiplicity index $k$ is a combinatorial property, then  Terao's conjecture is true for line arrangements.   
\end{proposition}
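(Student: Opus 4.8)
By \Cref{prop:Terao equiv} it suffices to establish condition~(b) of that proposition. So fix a free line arrangement $\mathcal A(f)$ with splitting type $(a,b)$, $a\le b$, fix a line arrangement $\mathcal A(g)$ with the same incidence lattice, write $g=\ell_1\cdots\ell_{a+b+1}$, and fix general linear forms $L,L_1,\dots,L_{b-a}\in R$. With $J=(\ell_1^b,\dots,\ell_{a+b+1}^b,L_1^b,\dots,L_{b-a}^b)$, the goal is to show that $\times L^2\colon[R/J]_{b-2}\to[R/J]_b$ is surjective.

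First I would pass to configurations of $2b+1$ points. Let $Z_f'$ (respectively $Z_g'$) be the set dual to the $a+b+1$ lines of $\mathcal A(f)$ (respectively to $\ell_1,\dots,\ell_{a+b+1}$) together with $b-a$ general lines, so that $\mathcal A(Z_f')=\mathcal A(f)\cup\{b-a\text{ general lines}\}$, likewise for $g$, and $|Z_f'|=|Z_g'|=2b+1$. Adjoining a prescribed number of general lines to an arrangement creates only new simple intersection points, pairwise distinct and lying on no previously existing flat, so the incidence lattice of $\mathcal A(Z_f')$ is determined by that of $\mathcal A(f)$ and the number $b-a$ alone. Hence $\mathcal A(Z_f')$ and $\mathcal A(Z_g')$ have the same incidence lattice, i.e.\ $Z_f'$ and $Z_g'$ have the same combinatorics.

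The key claim is that $m_{Z_f'}=b$, the maximal value allowed by \Cref{lem:invariants}(c) for a set of $2b+1$ points. By \Cref{lem:invariants}(a) this says $\mathcal A(f)\cup\{b-a\text{ general lines}\}$ has balanced splitting type $(b,b)$, and for this I would show that adjoining one general line to an arrangement of generic splitting type $(a,b)$ with $a<b$ produces generic splitting type $(a+1,b)$. Indeed, on a general line $L_0\cong\PP^1$ the derivation bundle restricts to $\mathcal O_{\PP^1}(-a)\oplus\mathcal O_{\PP^1}(-b)$; adjoining a general line corresponds to a generic elementary lower modification of this bundle at the point $L_0\cap(\text{new line})$, and such a modification of $\mathcal O_{\PP^1}(-a)\oplus\mathcal O_{\PP^1}(-b)$ with $a<b$ is $\mathcal O_{\PP^1}(-a-1)\oplus\mathcal O_{\PP^1}(-b)$ — a short computation of sections on $\PP^1$, or an appeal to the known addition rule for generic splitting types. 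Iterating $b-a$ times from $(a,b)$ yields $(b,b)$; note that only the splitting type of $\mathcal A(f)$ is used here, freeness having entered solely through the reduction to condition~(b). Now apply the hypothesis: since having multiplicity index $b$ is a combinatorial property of sets of $2b+1$ points, the equality $m_{Z_f'}=b$ together with the combinatorial equivalence above forces $m_{Z_g'}=b$. By \Cref{range unexpected curves}, a set admits an unexpected curve of degree $j$ only when its multiplicity index is $<j$, so $Z_g'$ has no unexpected curve of degree $b$. Applying \Cref{SLP condition} to $\mathcal A(Z_g')$ with $j+1=b$ — its associated algebra being precisely $R/J$ — we conclude that $\times L^2\colon[R/J]_{b-2}\to[R/J]_b$ has maximal rank; and since the dimension computation in the proof of \Cref{prop:Terao equiv} shows that source and target have the same dimension $\binom b2$, maximal rank here is the same as surjectivity. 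This verifies condition~(b) of \Cref{prop:Terao equiv}, so Terao's conjecture holds.

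I expect the main obstacle to be the splitting-type computation of the third paragraph, namely that repeatedly adjoining general lines drives the generic splitting type to the balanced pair $(b,b)$; this is the one genuinely geometric input, resting on the elementary-modification analysis of the derivation bundle restricted to a general line. Two supporting points also require care: that enlarging an arrangement by a fixed number of general lines is a purely combinatorial operation on incidence lattices — this is what lets one transfer the hypothesis from $Z_f'$ to $Z_g'$ — and that for the particular ideal $J$ above the phrase ``maximal rank'' in \Cref{SLP condition} coincides with ``surjective,'' which is where one reuses the Hilbert-function bookkeeping already present in the proof of \Cref{prop:Terao equiv}.
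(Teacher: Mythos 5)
Your proof is correct and follows exactly the route the survey indicates (the survey states the proposition without proof, citing \cite{CHMN}, but says it is obtained by combining the argument for \Cref{prop:Terao equiv} with \Cref{lem:invariants}): reduce to condition (b) of \Cref{prop:Terao equiv}, augment both arrangements by $b-a$ general lines to get combinatorially equivalent sets of $2b+1$ points, show the one coming from $\mathcal A(f)$ has maximal multiplicity index $b$, transfer this to $Z_g'$ by the combinatorial hypothesis, and conclude via \Cref{range unexpected curves} and \Cref{SLP condition}.

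The one step that needs more than you give it is the word ``generic'' in ``generic elementary lower modification'': a priori the modification at $L_0\cap(\text{new line})$ could leave the destabilizing sub-line-bundle $\mathcal O_{\PP^1}(-a)$ intact and instead lower the quotient, producing splitting type $(a,b+1)$ rather than $(a+1,b)$. What saves the claim is that when $a_Z<b_Z$ one has $\dim[I_{Z+a_ZP}]_{a_Z+1}=1$, so there is a \emph{unique} curve $C_P(Z)$ of degree $a_Z+1$ computing $m_Z$, and the incidence locus $\{(P,Q): Q\in C_P(Z)\}$ is a proper closed subset of $\PP^2\times\PP^2$; hence a general added point $Q$ misses $C_P(Z)$, forcing $m_{Z\cup\{Q\}}=m_Z+1$ (the bound $m_{Z\cup\{Q\}}\le m_Z+1$ is automatic, since one point imposes at most one condition and $\dim[I_{Z+(m_Z+1)P}]_{m_Z+2}\ge 2$). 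With that supplied, the iteration $(a,b)\to(a+1,b)\to\cdots\to(b,b)$ and your remaining bookkeeping --- in particular that $m_{Z_g'}=b$ forces $t_{Z_g'}=b$ by \Cref{lem:invariants}(c), whence $\dim[R/J]_{b-2}=\dim[R/J]_b=\binom{b}{2}$ and maximal rank is the same as surjectivity --- are all correct.
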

\smallskip

The definition of an unexpected curve invites extensions. In fact, such extensions have been considered in the literature. The first step is to pass from the projective plane to any projective space. 
Consider a point $P \in \PP^n$ defined by the ideal $I_P \subset R = K[x_0,\ldots,x_n]$. The vector space of degree $d$ forms defining hypersurfaces of $\PP^n$ that have multiplicity at least  $m$ at $P$ is $[I_P^m]_d$. In fact, the fat point scheme $mP$ is defined by the ideal $I_{mP} = I_P^m$
(note that $I_P^m$ is a saturated ideal). One may also consider hypersurfaces that vanish with some multiplicity at more than one general point. For results in this direction, we refer to \cite{Szp1}. 

The second step is to consider also subschemes of positive dimension. 
Let $Z \subset \PP^n$ be any projective subscheme. Thus, the linear system of degree $d$ hypersurfaces that contain $Z$ and have multiplicity at least $m$ at $P$ corresponds to $[I_Z \cap I_P^m]_d$.  Its actual dimension is 
\[
{\adim} (Z,d,m) = \dim [I_Z \cap I_P^m]_d.  
\]
The requirement that a degree $d$ hypersurface has multiplicity at least $m$ at a point $P \in \PP^n$ imposes $\binom{n+m-1}{n}$ linear conditions. Thus, if $P$ is a general point then the expected dimension of the linear system $[I_Z \cap I_P^m]_d$ is 
\[
{\edim} (Z,d,m) = \max \left \{ 0, \dim [I_Z]_d - \binom{n+m-1}{n} \right \}. 
\]
We say that  $Z \subset \PP^n$  {\em admits an unexpected hypersurface} of degree $d$ with a general point $P$ of multiplicity $m$ if the actual dimension of the corresponding linear system is greater than the expected dimension, that is, if 
\begin{equation}
  \label{eq:unexpected hype}
 \dim [I_{Z+mP}]_{d}   > \max \left  \{0, \ \dim [I_Z]_{d} - \binom{n+m-1}{n}\right \}.
\end{equation}
Sometimes, we also  say that $Z \subset \PP^2$  admits an unexpected hypersurface of degree $d$ with respect to $mP$. 

We have seen above that the characterization of sets of points that admit an unexpected hypersurface (of any degree) is challenging even if $n=2$. A characterization of all subschemes that admit some unexpected hypersurface is currently out of reach. 
A more modest question is to find conditions on the triples $(n, d, m)$ such that there is a finite subset $Z \subset \PP^n$ that admits  an unexpected hypersurface of degree $d$ with a general point of multiplicity $m$. It turns out that this question has a very clean and complete answer. 

\begin{theorem}[{\cite[Theorem 1.2]{HMNT}}]
     \label{mainThm}
Given positive integers $(n,d,m)$ with $n \ge 2$, there exists an unexpected hypersurface of degree $d$ with a general point of multiplicity $m$ for some finite subset  
$Z \subset  \PP^n$  if and only if one of the following conditions holds true:
\begin{enumerate}[label={(\roman*)}]
\item[(a)] $n=2$ and $(d,m)$ satisfies $d > m > 2$; or
\item[(b)] $n \ge 3$ and $(d,m)$ satisfies $d \geq m \geq 2$.
\end{enumerate}
\end{theorem}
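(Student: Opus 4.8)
\emph{The plan} is to prove \Cref{mainThm} in two halves: first that the stated numerical conditions are \emph{necessary}, and then, for every admissible triple $(n,d,m)$, to \emph{construct} a witnessing finite set $Z$.

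\emph{Necessity.} Two observations are uniform in $n$. If $m=1$, then for a general point $P$ the system $[I_Z]_d$ has base locus of dimension at most $n-1$, so $P$ is not a base point and $\dim[I_{Z+P}]_d=\max\{0,\dim[I_Z]_d-1\}$: never unexpected. If $d<m$, then $[I_{Z+mP}]_d=0$ while the expected dimension is nonnegative: never unexpected. Hence $m\ge 2$ and $d\ge m$ always, which is exactly (b) when $n\ge 3$. For $n=2$ two further cases must be ruled out. If $d=m$, a plane curve of degree $m$ with an $m$-fold point at $P$ is a union of $m$ lines through $P$; to contain $Z$ it must contain the $|Z|$ distinct lines $\overline{PQ}$, $Q\in Z$, so $\dim[I_{Z+mP}]_m=\max\{0,m-|Z|+1\}$, whereas $\dim[I_Z]_m\ge\binom{m+2}{2}-|Z|$ forces $\dim[I_Z]_m-\binom{m+1}{2}\ge m-|Z|+1$; thus $d=m$ is never unexpected in $\PP^2$, so one needs $d\ge m+1$. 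The remaining, genuinely delicate, case is $n=2$, $m=2$: I would show a general double point imposes $\binom{3}{2}=3$ \emph{independent} conditions on every linear system $[I_Z]_d$, i.e. $\adim(Z,d,2)=\edim(Z,d,2)$ for general $P$. The point is that the space of $2$-jets of a form at a point is ``value plus first-order part,'' of dimension $3$, so it suffices to know that a general member of $[I_Z]_d$ through a general point $P$ is smooth at $P$ and has unconstrained tangent direction there. If this failed for general $P$, then (by lower-semicontinuity of rank) it would fail at every $P$, and a count on the incidence correspondence $\{(P,[F]):F\in[I_Z]_d,\ F\text{ singular at }P\}$ would force the general member of $[I_Z]_d$ itself to be singular, with its singular points sweeping out all of $\PP^2$; by Bertini those singular points lie on the base locus, so $[I_Z]_d$ would have a positive-dimensional fixed component. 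One then strips off that component and inducts on the degree of the residual system. The bookkeeping around fixed and non-reduced base loci is where this direction requires care.

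\emph{Sufficiency.} I would produce all examples in two steps: base cases in the extremal degree ($d=m$ for $n\ge 3$, $d=m+1$ for $n=2$), and then a degree-raising device. For $n=3$ and $d=m$, take $Z$ an $(m,m+1)$-grid on a smooth quadric $Q\subset\PP^3$; such a grid is geproci, so its general projection $\pi_P(Z)$ to a plane is a complete intersection of type $(m,m+1)$, giving $\adim(Z,m,m)=\dim[I_{\pi_P(Z)}]_m=1$. From the exact sequence $0\to Q\cdot R_{m-2}\to[I_Z]_m\to[I_{Z,Q}]_m\to 0$, in which $[I_{Z,Q}]_m$ consists of the bidegree-$(m,m)$ forms on $Q\cong\PP^1\times\PP^1$ vanishing on the grid and hence is divisible by the $m$ ``vertical'' rulings, one computes $\dim[I_Z]_m=\binom{m+1}{3}+(m+1)$, so $\edim(Z,m,m)=\max\{0,m+1-\binom{m+1}{2}\}=0<1$: an unexpected cone of degree $m$ for every $m\ge 2$. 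For $\PP^n$ with $n\ge 4$ one instead invokes the root-system configurations of \cite{HMNT} (points dual to $B_n$-type Weyl arrangements, which span $\PP^n$), and for $n=2$ the line-arrangement constructions (arrangements of splitting type $(m,b)$ with $b$ large and no $m+2$ concurrent lines) which by \Cref{range unexpected curves} yield a set admitting an unexpected curve of degree $m+1$ for each $m\ge 3$. Given any $Z_0$ with an unexpected hypersurface of degree $d_0$ with respect to $mP$ and any $d>d_0$, set $k=d-d_0$, choose general hyperplanes $H_1,\dots,H_k\subset\PP^n$, and let $Z$ be $Z_0$ together with enough general points on each $H_i$ that every form of degree $\le d$ vanishing on those points vanishes on $H_i$. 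Then for general $P$ every element of $[I_{Z+mP}]_d$ is divisible by $H_1\cdots H_k$, and $[I_{Z+mP}]_d=(H_1\cdots H_k)[I_{Z_0+mP}]_{d_0}$ while $[I_Z]_d=(H_1\cdots H_k)[I_{Z_0}]_{d_0}$; hence $Z$ admits an unexpected hypersurface of degree $d$ with respect to $mP$. This accounts for every admissible triple.

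\emph{Where the difficulty lies.} The degree-raising step is routine, and the $m=1$ and $d<m$ exclusions are immediate; the two substantive points are the $n=2$, $m=2$ necessity (the Bertini/incidence-variety argument together with the fixed- and non-reduced-base-locus bookkeeping) and, on the existence side, producing base-case configurations in \emph{every} dimension and for every multiplicity---exhibiting sets with an unexpected cone in $\PP^n$ for all $n\ge 3$ and with unexpected curves of all admissible degrees in $\PP^2$. That last point is exactly where the structure theory of derivation bundles and the explicit grid and root-system constructions must be brought in, and it is the main obstacle.
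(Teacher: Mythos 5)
Your outline gets several things right: the easy exclusions ($m=1$, $d<m$, and $d=m$ in $\PP^2$) are correct, the degree-raising device (padding with enough general points on $k$ general hyperplanes to force divisibility by their product) is valid and reduces everything to the extremal degrees, and the $(m,m{+}1)$-grid computation for $n=3$, $d=m$ checks out ($\adim=1$, $\dim[I_Z]_m=\binom{m+1}{3}+m+1$, $\edim=0$). The genuine gap is on the existence side for $n\ge 4$: your plan requires, for every $n\ge 4$ and every $m\ge 2$, a base-case finite set in $\PP^n$ with an unexpected cone of degree $m$, and ``invoking the root-system configurations of \cite{HMNT}'' does not supply this --- those configurations ($B_n$, $D_4$, $F_4$, \dots) are sporadic examples tied to particular degrees and multiplicities, not a family realizing every $m\ge 2$ in every dimension. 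The paper's route is precisely the uniform construction you are missing: \Cref{cone in P3} shows that \emph{any} reduced, equidimensional, non-degenerate curve $C\subset\PP^3$ of degree $d$ admits the cone over $C$ as an unexpected surface of degree $d$ with multiplicity $d$ (so every $m\ge 2$ is realized at once by choosing $\deg C=m$); this is then propagated to codimension-two subschemes of $\PP^n$ for all $n\ge 3$ by general hyperplane sections (\cite[Proposition 2.4]{HMNT}), and finally converted into statements about \emph{finite} sets by replacing the curve with sufficiently many general points on it. Without that (or an equivalent) your sufficiency argument only covers $n\in\{2,3\}$.

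Two further soft spots, which to be fair are also not developed in the survey's own sketch. First, the $n=2$, $m=2$ exclusion is the hardest piece of necessity, and your Bertini/incidence-correspondence argument is only a sketch: you still have to handle the case where the projection of the incidence variety to the linear system has positive-dimensional fibers, show that Bertini forces the resulting singular locus to be a \emph{fixed} component $B$ with $B^2$ dividing every member (this uses that the symbolic square of a principal radical ideal is the ordinary square), and then carry out the induction after stripping off $B^2$ --- the residual system is no longer of the form $[I_{Z'}]_{d'}$ for a finite set, so the induction must be phrased for arbitrary linear systems. Second, the $n=2$ base cases for every $m\ge 3$ are asserted to exist in \cite{CHMN} rather than exhibited; the survey itself only says their existence is guaranteed ``for the most part'' by those examples, so some explicit family (e.g.\ the Fermat-type arrangements) realizing every multiplicity index $m\ge 3$ with $m_Z<t_Z$ still needs to be named and verified via \Cref{range unexpected curves}.
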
 

If $n = 2$ and $d = m+1$, the existence of a suitable set $Z \subset \PP^2$ is guaranteed for the most part by examples in \cite{CHMN}. The argument in the other cases is constructive and uses cones. By a cone with vertex $P$ we mean a subscheme $X$ such that for every point $Q$ in $X$ the line joining $P$ and $Q$ is in $X$. Observe that, by B\'ezout’s theorem, every hypersurface of degree $d$ with a point $P$ of multiplicity $d$ is a cone with vertex $P$. 

The key for establishing \Cref{mainThm} is the insight that many non-degenerate curves in $\PP^3$ admit an unexpected hypersurface. 

\begin{proposition}[{\cite[Proposition 2.1]{HMNT}}] 
    \label{cone in P3}
Let $C \subset \PP^3$ be a reduced, equidimensional, non-degenerate curve of degree $d$ and let $P \in \PP^3$ be a general point. 
Then the cone over $C$ with vertex $P$ is an unexpected surface of degree $d$ for $C$ with multiplicity $d$ at $P$. It is the unique unexpected surface of $C$ of this degree and multiplicity.
\end{proposition}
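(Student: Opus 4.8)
The plan is to choose coordinates so that $P = [0:0:0:1]$, write $R = K[x_0,\dots,x_3]$ and $R' = K[x_0,x_1,x_2] \subset R$, and let $\pi_P\colon \PP^3 \setminus \{P\} \to \PP^2 = \Proj R'$ be the projection from $P$, with $\bar C := \pi_P(C) \subset \PP^2$. First I would record the standard facts about the cone: a subscheme is a cone with vertex $P$ precisely when its ideal is generated in $R'$, so the cone $X$ over $C$ with vertex $P$ is $V(I_C \cap R') = V(I_{\bar C})$. Since $C$ is reduced, $\bar C$ is reduced; and since the projection of a curve in $\PP^3$ from a general point is birational onto its image (classical in characteristic zero; for reducible $C$ it is birational on each component and separates the components), $\deg \bar C = \deg C = d$. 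Thus $I_{\bar C} = (F)$ with $F$ squarefree of degree $d$, so $X = V(F)$ is a surface of degree $d$, and being a cone over a plane curve of degree $d$ it has multiplicity exactly $d$ at $P$.

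Next I would show $F \in [I_{C+dP}]_d$, i.e.\ $\adim(C,d,d)\ge 1$: here $I_P^d = (x_0,x_1,x_2)^d$, so $[I_P^d]_d = R'_d \ni F$, and $F$ vanishes on $C$ because $\pi_P(Q) \in \bar C$ for every $Q \in C$. For uniqueness I would take any $G \in [I_{C+dP}]_d$; as $G$ has degree $d$ and multiplicity $\ge d$ at $P$, a B\'ezout argument --- a general line through $P$ would otherwise meet $V(G)$ with total multiplicity exceeding $d$ --- forces $G$ to be a cone with vertex $P$, so $G \in R'_d$; then $G|_C = 0$ shows $G$ vanishes on $\bar C$, hence $G \in [I_{\bar C}]_d = K\cdot F$ because $\bar C$ is a reduced plane curve of degree $d$. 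Therefore $\dim[I_{C+dP}]_d = 1$, and $X$ is the unique surface of degree $d$ and multiplicity $d$ at $P$ containing $C$.

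It remains to verify that this surface is genuinely unexpected, i.e.\ that $\edim(C,d,d) = 0$, which amounts to the bound $\dim[I_C]_d \le \binom{d+2}{3}$; this is the heart of the argument. I would prove it by cutting with a general hyperplane $H = V(\ell)$. Since $I_C$ is saturated, $\ell$ is a nonzerodivisor on $R/I_C$, and combining the exact sequence $0 \to (R/I_C)(-1)\stackrel{\cdot\ell}{\longrightarrow}R/I_C \to R/(I_C,\ell)\to 0$ with the surjection $R/(I_C,\ell)\twoheadrightarrow R'/I_\Gamma$, where $R'\cong R/\ell$ and $\Gamma = C\cap H$ is a set of $d$ reduced points in $H\cong\PP^2$ by Bertini, yields $\dim[I_C]_j - \dim[I_C]_{j-1} \le \binom{j+2}{2} - h_\Gamma(j)$ for every $j$. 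Summing from $j=2$ to $j=d$ and using $\dim[I_C]_1 = 0$ (since $C$ is non-degenerate) gives $\dim[I_C]_d \le \sum_{j=2}^d\bigl(\binom{j+2}{2} - h_{\Gamma}(j)\bigr)$. The key input is the elementary fact that a reduced set of $d$ points $\Gamma \subset \PP^2$ satisfies $h_\Gamma(j) \ge \min\{j+1,\,d\}$ for all $j \ge 1$: indeed $h_\Gamma$ is nondecreasing with $h_\Gamma(1)\ge 2$, and applying Macaulay's theorem to an Artinian reduction of $R'/I_\Gamma$ (a cyclic module over a polynomial ring in two variables) shows its first difference is a Castelnuovo function, so $h_\Gamma$ increases strictly until it stabilizes at $d$. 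Substituting this bound and using the identity $\sum_{j=2}^d\bigl(\binom{j+2}{2} - \min\{j+1,d\}\bigr) = \binom{d+2}{3}$ gives $\dim[I_C]_d \le \binom{d+2}{3}$, hence $\edim(C,d,d) = 0 < 1 = \adim(C,d,d)$, which is the assertion.

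The step I expect to be the main obstacle is this last one, establishing $\edim(C,d,d)=0$. The recursion and the appeal to Bertini are routine, but one must be careful that the Hilbert-function bound for the general hyperplane section is invoked for an arbitrary reduced --- possibly very special --- set of $d$ points rather than only for points in general position, and that birationality of the generic projection, reducedness of $\bar C$, and saturatedness of $I_C$ all go through uniformly when $C$ is reducible. A secondary technical point is making fully precise the B\'ezout argument that degree $d$ together with multiplicity $d$ at $P$ forces a hypersurface to be a cone with vertex $P$.
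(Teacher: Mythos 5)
Your argument is correct, and it is worth noting that the survey itself does not prove this proposition: it is quoted from [HMNT, Proposition 2.1] without proof. The closest argument actually written out in the paper is the proof of the AV-sequence theorem [FGHM, Theorem 6.2] in \S\ref{juan2}, which proceeds quite differently from your proposal: there the virtual dimension is computed exactly via the Hilbert polynomial, using the Gruson--Lazarsfeld--Peskine regularity theorem to guarantee $h_C(t)=te-g+1$ for $t\ge e$, and the actual dimension is identified as $\binom{t-e+2}{2}$ by the same B\'ezout observation you make (any degree-$t$ cone through $C$ contains the cone over $C$); unexpectedness then falls out of $AV_{C,0}(t)=\binom{e-1}{2}-g>0$ for non-planar $C$. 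Your route replaces the GLP input by a general hyperplane section together with the elementary fact that the Hilbert function of a zero-dimensional subscheme of $\PP^2$ strictly increases until it reaches the degree (Macaulay applied to an Artinian reduction), which yields the inequality $\dim[I_C]_d\le\binom{d+2}{3}$ and hence $\edim(C,d,d)=0$; your computation $[I_P^d]_d=R'_d$ makes $\adim(C,d,d)=\dim[I_{\bar C}]_d=1$ transparent and gives uniqueness at the same time. The trade-off is clear: your argument is more elementary and self-contained (no regularity theorem, only an inequality on $\dim[I_C]_d$ is needed), while the paper's method buys the exact value of the whole AV-sequence, from which the proposition is the special case $t=e$. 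The points you flag as delicate --- birationality and injectivity of the general projection on a possibly reducible reduced curve, reducedness of the general hyperplane section, and the Hilbert-function bound for an arbitrary reduced set of $d$ points --- are all standard in characteristic zero and are handled correctly; in fact the bound $h_\Gamma(j)\ge\min\{j+1,d\}$ does not even require $\Gamma$ reduced, only that $I_\Gamma$ be saturated.
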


Note that the curve $C$ may be reducible, disconnected and/or singular. Its degree $d$ must be at least two since $C$ is non-degenerate. 

Using general hyperplane sections, the above result can be extended to codimension two subschemes of any $\PP^n$ with $n \ge 3$ (see \cite[Proposition 2.4]{HMNT}) and then be used to produce many sets of points with an unexpected hypersurface. 

Another source for examples of finite point sets that can have an unexpected hypersurface are hyperplane arrangements since any hyperplane of $\PP^n$ corresponds to a point in the dual projective space, see for example \cite{CHMN} and \cite{HMNT}. Root systems of various reflection groups give examples of sets of points with an unexpected hypersurface  \cite{HMNT}. However, not all root systems seem to give rise to unexpected hypersurfaces. It would be interesting to understand precisely which root systems do. In a similar direction, the so-called supersolvable line arrangements that give unexpected curves have been characterized in \cite{DMO}. Furthermore, as we will see in the next section, geproci sets give rise to unexpected hypersurfaces. 
\smallskip

A further extension of the concept of an unexpected hypersurface was developed by Trok in \cite{trok}, building on \cite{DHST, DHRST, HMT}.  Instead of considering a general point Trok studies the conditions imposed by a general linear subspace.  The starting point is the following observation. 

\begin{lemma}[{\cite[Page 14]{trok}, \cite[Lemma 2.1]{DHST}}]
   \label{lin space}
For any linear subspace $Q$ of $\PP^n$, one has for any integers $d \ge m-1 \ge 0$, 
\begin{align*}
\dim [R/I_Q^m]_d & = \sum_{i = 0}^{m-1} \binom{\dim Q + d - i}{\dim Q} \binom{\codim Q - 1 + i}{i} \\
& = \binom{n+d}{n} - \sum_{i = m}^{d} \binom{\dim Q + d - i}{\dim Q} \binom{\codim Q - 1 + i}{i}. 
\end{align*}
\end{lemma}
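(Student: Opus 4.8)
The plan is to reduce to an explicit monomial situation by a linear change of coordinates and then read the answer off from the multiplicativity of Hilbert functions under tensor products. Since $Q$ is a linear subspace, after applying a suitable element of $\mathrm{PGL}_{n+1}(K)$ I may assume $I_Q = (x_0,\ldots,x_{c-1}) \subset R$, where $c = \codim Q$ and hence $q := \dim Q = n - c$. Set $S = K[x_0,\ldots,x_{c-1}]$ and $T = K[x_c,\ldots,x_n]$, so that $R = S \otimes_K T$ as graded $K$-algebras (all variables in degree one), $T$ is a polynomial ring in $q+1$ variables, and $I_Q^m = \mathfrak{m}_S^m \cdot R$ with $\mathfrak{m}_S = (x_0,\ldots,x_{c-1}) \subset S$. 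As $T$ is free over $K$, this gives an isomorphism of graded $K$-algebras $R/I_Q^m \cong (S/\mathfrak{m}_S^m) \otimes_K T$.

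The key step is then the Cauchy product formula for the Hilbert function of a tensor product:
\[
\dim_K [R/I_Q^m]_d = \sum_{i=0}^{d} \dim_K [S/\mathfrak{m}_S^m]_i \cdot \dim_K [T]_{d-i}.
\]
Here $[S/\mathfrak{m}_S^m]_i = [S]_i$ for $0 \le i \le m-1$ while $[S/\mathfrak{m}_S^m]_i = 0$ for $i \ge m$ (every monomial of $S$ of degree at least $m$ lies in $\mathfrak{m}_S^m$), so $\dim_K [S/\mathfrak{m}_S^m]_i = \binom{c-1+i}{i}$ for $0 \le i \le m-1$ and vanishes otherwise; moreover $\dim_K[T]_{d-i} = \binom{q+d-i}{q}$. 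Because $d \ge m-1$, the surviving terms are exactly those with $0 \le i \le m-1$, and each such $i$ satisfies $d-i \ge 0$; substituting $c = \codim Q$ and $q = \dim Q$ produces the first displayed formula of the lemma.

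For the second equality, apply the same Cauchy product to $R = S \otimes_K T$ itself — equivalently, use that the Hilbert series of $R$ factors as $(1-t)^{-(n+1)} = (1-t)^{-c}\,(1-t)^{-(q+1)}$ — to obtain $\binom{n+d}{n} = \dim_K[R]_d = \sum_{i=0}^{d} \binom{c-1+i}{i}\binom{q+d-i}{q}$. Subtracting the first formula from this identity leaves precisely the tail $\sum_{i=m}^{d} \binom{\dim Q + d - i}{\dim Q}\binom{\codim Q - 1 + i}{i}$, which is the asserted second expression.

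There is no genuine obstacle: the only things to watch are the bookkeeping of binomial-coefficient indices (translating ``a polynomial ring in $\codim Q$ variables, in degree $i$'' into $\binom{\codim Q - 1 + i}{i}$, and similarly for $T$) and the precise role of the hypothesis $d \ge m-1 \ge 0$, which is exactly what makes truncating the sum at $i = m-1$ harmless and keeps every binomial coefficient in its intended range. One could alternatively run the entire argument at the level of power series, extracting the coefficient of $t^d$ from $\big(\sum_{i=0}^{m-1}\binom{\codim Q - 1 + i}{i}\,t^i\big)(1-t)^{-(\dim Q + 1)}$, but the decomposition $R/I_Q^m \cong (S/\mathfrak{m}_S^m)\otimes_K T$ makes the counting argument above the most transparent route.
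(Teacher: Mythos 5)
Your argument is correct and is essentially the paper's own: after choosing coordinates so that $I_Q=(x_0,\dots,x_{c-1})$, your Cauchy-product decomposition $R/I_Q^m\cong (S/\mathfrak m_S^m)\otimes_K T$, graded by the degree $i$ in the variables cutting out $Q$, is exactly the decomposition the paper invokes via the $I_Q$-adic filtration, where $I_Q^i/I_Q^{i+1}$ is free over $R/I_Q\cong T$ on $\binom{\codim Q-1+i}{i}$ generators of degree $i$. The bookkeeping of the hypothesis $d\ge m-1\ge 0$ and the derivation of the second formula by subtracting from the full Vandermonde convolution for $\binom{n+d}{n}$ are both handled correctly.
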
 

This computation is based on the fact that $I_Q^i/I_Q^{i+1}$ is a free $R/I_Q$-module generated by 
$[I_Q^i/I_Q^{i+1}]_i$. 

\Cref{lin space} shows that having multiplicity $m$ at $Q$ imposes  $\sum_{i = m}^{d} \binom{\dim Q + d - i}{\dim Q} \binom{\codim Q - 1 + i}{i}$ linear conditions on hypersurfaces of degree $d$. Thus, one says that a projective subscheme $Z \subset \PP^n$  {\em admits an unexpected hypersurface} of degree $d$ with a general linear subspace $Q$ of multiplicity $m$ if 
\begin{align*}
  \label{eq:unexpected mQ}
 \dim [I_{Z+mQ}]_{d} &  = \dim [I_Z \cap I_Q^m]_d   \\
 & > \max \Big  \{0, \ \dim [I_Z]_{d} - \binom{n+d}{n}  + \dim [I_Q^m]_d \Big \} \\
 & = \max \left  \{0, \ \dim [I_Z]_{d} - \binom{n+d}{n} 
 +  \sum_{i = m}^{d} \binom{\dim Q + d - i}{\dim Q} \binom{\codim Q - 1 + i}{i} \right \}.
\end{align*}
If $Z$ is a finite set this is Trok's definition (see \cite[Definition 5.2, Proposition 5.3]{trok}). 
Following Trok, we also refer to such an unexpected hypersurface as an unexpected $mQ$-hypersurface of degree $d$ of $Z$. 

Trok points out that in some cases the existence of an unexpected hypersurface is actually not that surprising. 

\begin{example}
    \label{exa:motivated very unexp}
(i) Let $H \subset \PP^n$ be a proper non-empty linear subspace and $d \ge 2$ be any integer. Then $H$ admits an unexpected degree $d$ hypersurface with a general linear codimension two subspace $Q \subset \PP^n$ of multiplicity $d-1$ if and only if the dimension of $H$ is at least two. 

Indeed, according to \cite[Proposition 5.5]{trok}, one has on the one hand
\begin{equation}
      \label{eq:lin space}
\dim [I_{H + (d-1)Q}]_d = d \cdot \codim H.       
\end{equation}
On the other hand,  \Cref{lin space} gives that the expected dimension of $[I_{H + (d-1)Q}]_d$ is 
\[
\max \Big \{ 0, \dim [I_Q^{d-1}]_d -  \dim [R/I_H]_d   \Big \} = 
\max \Big \{ 0, d n + 1 - \binom{d + \dim H}{d}  \Big\}. 
\]
Comparing these formulas, it follows that the actual dimension of $[I_{H + (d-1)Q}]_d$ is greater than its expected dimension unless 
the dimension of $H$ is zero or one. 

(ii) (\cite[Example 5.6]{trok}) 
Let $H \subset \PP^n$ be a proper linear subspace of dimension $h \ge 1$ and  $Q \subset \PP^n$
be a general linear codimension two subspace. 
Fix  any integer $d \ge 2$.   Let $W$ be a subset of $\binom{h+d}{d}$ general points of $H$. Thus, $[I_W]_d = [I_H]_d$, and so Formula \eqref{eq:lin space} gives 
\[
\dim [I_{W + (d-1)Q}]_d = d \cdot (n - h).
\]
Let $Z$ be a subset of $\PP^n$ obtained from $W$ by adding  $s$ further points of $\PP^n$ to $W$, where 
$s < \min \big \{d (n-h), \binom{h+d}{d}- dh \big \}$. Thus, 
\[
\dim [I_{Z + (d-1)Q}]_d \ge d \cdot (n - h) - s > 0, 
\]
and so $Z$ admits an unexpected  $(d-1)Q$-hypersurface of degree $d$. 
\end{example}

In this example, ``unexpectedness'' for $Z$ is explained by the ``unexpectedness'' for $H$ and the fact that sufficiently many points of $Z$ are in $H$. This motivates the following more restrictive concept of Trok. 

\begin{definition}[{\cite[Definition 5.7]{trok}}] 
   \label{def:very unexpected}
A projective subscheme $Z \subset \PP^n$ is said to   {\em admit a very unexpected hypersurface} of degree $d$ with a general linear subspace $Q \subset \PP^n$ of multiplicity $m$ if there is a subset $W \subseteq Z$ satisfying the  following conditions: 
\begin{itemize}
\item[(i)] $[I_{W + mQ}]_d = [I_{Z + mQ}]_d$. 

\item[(ii)] For any irreducible subvariety $X \subset \PP^n$, one has 
\[
|W \cap X | \le \dim [I_Q/I_{X+mQ}]_d. 
\]

\item[(iii)] 
\begin{align*}
 \dim [I_{W+mQ}]_{d} 
 & > \max \Big  \{0, \ \dim [I_W]_{d} - \binom{n+d}{n}  + \dim [I_Q^m]_d \Big \} \\
 & = \max \left  \{0, \ \dim [I_W]_{d} - \binom{n+d}{n} \right. + \\
& \hspace*{3cm} \left.   \sum_{i = m}^{d} \binom{\dim Q + d - i}{\dim Q} \binom{\codim Q - 1 + i}{i} \right \}.
\end{align*}
\end{itemize} 
\end{definition}

\begin{remark}

(i) It is straightforward to check that the condition 
\[
\dim [R/I_W]_d > \dim [I_Q^m/I_{W + mQ}]_d 
\]
is equivalent to 
\[
 \dim [I_{W+mQ}]_{d}  > \dim [I_W]_{d} - \binom{n+d}{n}  + \dim [I_Q^m]_d. 
\]
Thus, the above definition, while not Trok's original formulation,  agrees with the original \cite[Definition 5.7]{trok}, except for a slight adjustment to account for the requirement $[I_{Z + mQ}]_d \neq 0$. 

(ii) Condition (iii) in \Cref{def:very unexpected} says that $W$ has an unexpected $mQ$-hypersurface of degree $d$, which implies that $Z$ has the same property by Condition (i). 
\end{remark}

The following example illustrates the difference between unexpected and very unexpected hypersurfaces. 

\begin{example}
   \label{rem:unexp vs very unexp} 
Let $H \subset \PP^3$ be any plane and $Q \subset \PP^3$ a general line. Furthermore, let $U$ be a general set of 10 points of $H$. Choose two points $P_1, P_2 \in \PP^3 \setminus H$ and set $Z = U + P_1 +P_2$. We claim that $Z$ has an unexpected $2Q$-hypersurface of degree 3. Indeed, one expects $[I_{Z+2Q}]_3 = 0$ as the expected dimension of  $[I_{Z+2Q}]_3$ is 
\[
\max \Big  \{0, \ \dim [I_Z]_{3} - \binom{3+3}{3}  + \dim [I_Q^2]_3 \Big \} = 0,
\]
where we used the fact that $\dim [I_Z]_3 = 8$ and \Cref{lin space}. 

Consider now the linear forms $\ell, \ell_1, \ell_2$ that define $H$, the span of $Q$ and $P_1$ and the span of $Q$ and $P_2$, respectively. Then $\ell \ell_1 \ell_2$ is in $[I_{Z+2Q}]_3$ and so defines an unexpected $2Q$-hypersurface of $Z$. In fact, it is the unique such unexpected hypersurface as $ \dim [I_{Z+2Q}]_3~=~1$. 

We now show that $Z$ does not have any very unexpected $2Q$-hypersurface of degree 3. Indeed, if there is a subset $W \subset Z$ satisfying the conditions of \Cref{def:very unexpected}, then by using $X= H$ Condition (ii) gives 
\[
|W \cap H | \le \dim [I_Q/I_{H+2Q}]_3 = 7, 
\]
where we used \Cref{lin space} and Formula \eqref{eq:lin space}. It follows that 
$\dim [R/I_W]_3 \le |W|  \le |W \cap H| + 2 \leq 9$. Hence Condition (iii) in  \Cref{def:very unexpected} says
\[
\dim [I_{W+mQ}]_{d}  > \max \Big  \{0, \ \dim [I_W]_{d} - \binom{n+d}{n}  + \dim [I_Q^m]_d \Big \} \ge 1. 
\]
This is a contradiction because Condition (i) in  \Cref{def:very unexpected} requires $ \dim [I_{W+2Q}]_3 =  \dim [I_{Z+2Q}]_3 = 1$.
\end{example}

Trok introduces a new invariant to characterize the existence of very unexpected hypersurfaces. 

\begin{definition}[{\cite[Definition 5.20]{trok}}]
    \label{def:ExC}
For any finite subset $Z \subset \PP^n$ and integer $d \ge 0$, define the \emph{modified expected number of conditions} imposed by $Z$ as 
\[
\Exc (Z, d) = \min \Big \{ s + d \sum_{i=1}^s \dim H_i \; | \; H_1,\ldots,H_s \neq \emptyset  
\text{ subspaces of  $ \PP^n$ with $Z \subseteq \bigcap_{i = 1}^s H_i $} \Big \}.  
\]
\end{definition} 

Using as subspaces the points of $Z$, one obtains $\Exc (Z, d) \le |Z|$ for each $d$. The new invariant allows Trok to establish the following criterion. 

\begin{theorem}[{\cite[Theorem 5.21]{trok}}] 
   \label{thm:very unexp} 
A finite subset $Z$ of $\PP^n$ admits a very unexpected hypersurface of degree $d$  with a general  codimension two linear subspace $Q \subset \PP^n$  of multiplicity $d-1$ if and only if 
\[
\dim [I_{Z + (d-1)Q}]_d > n d + 1 - \Exc (Z, d). 
\]
\end{theorem}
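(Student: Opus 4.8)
The plan is to isolate the two numerical identities that make $\Exc(Z,d)$ the correct invariant, dispatch the forward implication at once, and then concentrate on the converse, which I expect to be much the harder direction. For a general codimension two linear subspace $Q \subset \PP^n$, \Cref{lin space} applied with $m = d-1$, $\dim Q = n-2$, $\codim Q = 2$ collapses to the two terms $i = d-1, d$ and gives $\dim [I_Q^{d-1}]_d = d(n-1) + (d+1) = nd+1$; this is the origin of the ``$nd+1$'' in the statement and of the $\dim [I_Q^m]_d$ occurring in condition (iii) of \Cref{def:very unexpected}. Combined with Formula \eqref{eq:lin space}, it yields, for every nonempty linear subspace $H$, the identity $\dim [I_Q^{d-1}/I_{H+(d-1)Q}]_d = (nd+1) - d\codim H = d\dim H + 1$, the quantity appearing in condition (ii) (with $Q$ of codimension two and $m = d-1$, the symbol $[I_Q/I_{X+mQ}]_d$ there is to be read as $[I_Q^m/I_{X+mQ}]_d$). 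Since $I_{\langle X\rangle} \subseteq I_X$ for the linear span $\langle X\rangle$ of a subvariety $X$, one has in general $\dim [I_Q^{d-1}/I_{X+(d-1)Q}]_d \le d\dim\langle X\rangle + 1$. Throughout put $c_Z := (nd+1) - \dim [I_{Z+(d-1)Q}]_d$; from $[I_Q^{d-1}/I_{Z+(d-1)Q}]_d \hookrightarrow [R/I_Z]_d$ one always has $c_Z \le \dim [R/I_Z]_d$, and the asserted equivalence is that $Z$ admits a very unexpected hypersurface if and only if $c_Z < \Exc(Z,d)$.

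For the forward implication, suppose $Z$ admits a very unexpected hypersurface, witnessed by $W \subseteq Z$ as in \Cref{def:very unexpected}. Choose linear subspaces $H_1, \dots, H_s$ realizing $\Exc(Z,d)$ in \Cref{def:ExC}, so they cover $Z$ and $s + d\sum_i \dim H_i = \Exc(Z,d)$. Applying condition (ii) with $X = H_i$ and summing over $i$ gives $|W| \le \sum_i |W \cap H_i| \le \sum_i (d\dim H_i + 1) = \Exc(Z,d)$, hence $\dim [R/I_W]_d \le |W| \le \Exc(Z,d)$. Rewriting condition (iii) using $\dim [I_W]_d - \binom{n+d}{n} = -\dim [R/I_W]_d$ and $\dim [I_Q^{d-1}]_d = nd+1$, it reads $\dim [I_{W+(d-1)Q}]_d > \max\{0, (nd+1) - \dim [R/I_W]_d\} \ge (nd+1) - \Exc(Z,d)$, and by condition (i) the left-hand side equals $\dim [I_{Z+(d-1)Q}]_d$. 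Therefore $c_Z < \Exc(Z,d)$.

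For the converse, assume $c_Z < \Exc(Z,d)$; note $\Exc(Z,d) \le nd+1$ (cover $Z$ by $\PP^n$ itself), so $c_Z < nd+1$. Let $W_0 \subseteq Z$ be minimal with $[I_{W_0+(d-1)Q}]_d = [I_{Z+(d-1)Q}]_d$; minimality forces the evaluation functionals of the points of $W_0$ to be linearly independent on $[I_Q^{d-1}]_d$ (removing any point strictly enlarges that space), whence $|W_0| = c_Z$, $\dim [R/I_{W_0}]_d = |W_0|$, and $|W_0 \cap X| \le \dim [I_Q^{d-1}/I_{X+(d-1)Q}]_d$ for every subvariety $X$ (the restricted functionals descend, still independent, to $[I_Q^{d-1}]_d/[I_{X+(d-1)Q}]_d$). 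Thus $W_0$ satisfies conditions (i) and (ii) but fails (iii), since for $W_0$ both sides of that inequality equal $(nd+1) - |W_0|$. The slack is that adjoining to any set a point of $Z$ changes neither $[I_{\,\cdot\,+(d-1)Q}]_d$ (that space lies in $[I_Z]_d$, which vanishes at every point of $Z$) nor the associated number $c_{\,\cdot\,}$, so it preserves (i); moreover for every $W$ with $W_0 \subseteq W \subseteq Z$ one has $c_W = c_Z < nd+1$, so condition (iii) for such a $W$ is equivalent to the single inequality $\dim [R/I_W]_d > c_Z$. Hence the converse reduces to finding $W$ with $W_0 \subseteq W \subseteq Z$ satisfying condition (ii) and with $\dim [R/I_W]_d > c_Z$.

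The construction of this $W$ is where I expect the real difficulty to lie, because conditions (ii) and (iii) pull in opposite directions: (ii) keeps $W$ ``spread out'' while (iii) needs $W$ large enough to exhibit the $(d-1)Q$-unexpectedness, and one must show these can be reconciled \emph{precisely} when $c_Z < \Exc(Z,d)$. The natural first move is to reformulate (ii): for a finite set, $|W\cap X| \le \dim[I_Q^{d-1}/I_{X+(d-1)Q}]_d$ for all irreducible $X$ is equivalent to its linear instances $|W\cap H| \le d\dim H + 1$, i.e.\ to $\Exc(W,d) = |W|$. One then takes $W$ maximal among subsets of $Z$ containing $W_0$ and satisfying (ii), and argues that its $\dim[R/I_W]_d$ cannot equal $c_Z$: if it did, maximality would attach to each point of $Z$ a linear subspace on which $W$ already meets the bound $d\dim H+1$, and these should assemble into a covering of $Z$ of cost contradicting $\Exc(Z,d) > c_Z$. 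Making this last step rigorous rests on the \emph{sharp} form of the inequality from the first paragraph, namely that for a general codimension two $Q$ an irreducible subvariety $X$ imposes on $[I_Q^{d-1}]_d$ exactly as many conditions as its span, $\dim[I_Q^{d-1}/I_{X+(d-1)Q}]_d = d\dim\langle X\rangle + 1$; one proves this by restricting to $\langle X\rangle$ and invoking the bound $\dim[R/I_X]_d \ge d\dim\langle X\rangle + 1$ for a non-degenerate irreducible $X$ of positive dimension, so that $[I_X]_d$ meets the generic member $[I_Q^{d-1}]_d$ only in $0$. Establishing this sharp identity, and then controlling the overlaps in the extracted covering so that its cost is genuinely bounded by $\dim[R/I_W]_d$ rather than by $\sum_j|W\cap H_j|$ (equivalently, showing $\dim[R/I_Z]_d > c_Z$ and that this gap survives the passage to a spread subset), is the technical heart of Trok's argument and the step I would expect to absorb most of the work.
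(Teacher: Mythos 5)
Your forward implication is complete and correct: applying condition (ii) of \Cref{def:very unexpected} to the subspaces $H_1,\dots,H_s$ realizing $\Exc(Z,d)$, together with the computation $\dim [I_Q^{d-1}/I_{H+(d-1)Q}]_d = d\dim H + 1$ coming from \Cref{lin space} and Formula \eqref{eq:lin space}, gives $|W|\le \Exc(Z,d)$, and condition (iii) plus condition (i) then force the stated inequality. The converse, however, is where the content of the theorem lies, and you have not proved it. You correctly reduce it to producing a subset $W$ with $W_0\subseteq W\subseteq Z$ satisfying condition (ii) and $\dim[R/I_W]_d>c_Z$, propose to take $W$ maximal subject to (ii), and then explicitly defer both the contradiction argument (``these should assemble into a covering of $Z$ of cost contradicting $\Exc(Z,d)>c_Z$'') and the sharp identity $\dim[I_Q^{d-1}/I_{X+(d-1)Q}]_d=d\dim\langle X\rangle+1$ for irreducible $X$ on which that reduction depends. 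These are exactly the hard steps: without the sharp identity, the claimed equivalence of condition (ii) with its linear instances is unjustified (a priori an irreducible $X$ could impose fewer conditions on $[I_Q^{d-1}]_d$ than its linear span does, so that (ii) is strictly stronger than its linear specializations), and without a controlled extraction of a covering whose cost is bounded by $\dim[R/I_W]_d$ rather than by $\sum_j|W\cap H_j|$, the maximality argument is only a heuristic. So the proposal is an honest and well-organized reduction, but it is not a proof.

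For calibration, the survey itself does not prove this theorem either: it quotes it from Trok and records only that the proof rests on a duality (Trok's Theorems 4.14 and 5.27) relating $[I_{Z+(d-1)Q}]_d$ to the derivation module of the hyperplane arrangement dual to $Z$, extending the Faenzi--Vall\`es correspondence from $\PP^2$ to $\PP^n$. That route is genuinely different from your direct linear-algebra attack; the covering-type argument you would need to finish the converse is, in Trok's treatment, absorbed into statements about logarithmic derivations, which suggests that the step you left open is not routine.
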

 
Note that the condition stated in \cite[Theorem 5.21]{trok} is equivalent to 
\begin{align*}
\dim [I_{Z + (d-1)Q}]_d & > \dim [I_Q^{d-1}]_d - \Exc (Z, d) \\
& = n d + 1 - \Exc (Z, d), 
\end{align*}
where the equality is a consequence of \Cref{lin space}. The proof of \Cref{thm:very unexp} is based on a duality result \cite[Theorems 4.14 and 5.27]{trok} that relates unexpected hypersurfaces of $Z$ to the derivation module of the hyperplane arrangement dual to $Z$, extending previous work of Faenzi and Valles \cite[Theorem 4.3]{FV} for points in the projective plane to points in $\PP^n$. 

Coming back to our starting point, finite subsets of a projective plane, Trok shows that the above theorem implies the following result. Recall that we use $(a_Z, b_Z)$ to denote the splitting type of the line arrangement defined by $Z \subset \PP^2$. 

\begin{theorem}[{\cite[Theorem 5.1]{trok}}]
    \label{thm:plane trok}
Any finite set $Z \subset \PP^2$ has exactly one of the following two properties: 
\begin{itemize}

\item[(i)] There is a subset of $Z$ consisting of at least $a_Z + 2$ collinear points, and so $Z$ does not admit any unexpected curve of degree $d$ with a general point of $\PP^2$ of multiplicity $d-1$. 

\item[(ii)] The set $Z$ admits an unexpected curve of degree $d$ with a general point of $\PP^2$ of multiplicity $d-1$ whenever $a_Z < d < b_Z$. 
\end{itemize}
\end{theorem}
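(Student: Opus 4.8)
The plan is to read the statement off from the structure theory of unexpected curves recorded in \Cref{range unexpected curves} and \Cref{thm:geomVersion}, using \Cref{lem:invariants} to translate the governing invariants into the language of the splitting type $(a_Z, b_Z)$; I will also indicate how it follows from Trok's criterion \Cref{thm:very unexp}, which is how Trok proceeds. Throughout, a codimension-two linear subspace of $\PP^2$ is a point, so an unexpected curve of degree $d$ ``with a general point of multiplicity $d-1$'' is exactly an unexpected curve of degree $d$ in the sense of \eqref{eq:unexpected} (take $j = d-1$), and $m_Z = a_Z$ by \Cref{lem:invariants}(a).

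Suppose first that $Z$ contains at least $a_Z + 2 = m_Z + 2$ collinear points. Then \Cref{thm:geomVersion} immediately shows that $Z$ admits no unexpected curve at all, which is property~(i). Suppose instead that no $a_Z + 2$ points of $Z$ are collinear, and let $d$ be an integer with $a_Z < d < b_Z$ (if there is none, the assertion in (ii) is vacuous). Then $a_Z + 2 \le b_Z$, so from $a_Z + b_Z = |Z| - 1$ we get $2 m_Z + 2 = 2 a_Z + 2 \le a_Z + b_Z < |Z|$; combined with the hypothesis that no $m_Z + 2$ points are collinear, \Cref{thm:geomVersion} gives that $Z$ admits an unexpected curve of every degree $j$ with $m_Z < j \le |Z| - m_Z - 2 = b_Z - 1$, i.e.\ of every degree $j$ with $a_Z < j < b_Z$; in particular one of degree $d$. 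This is property~(ii). The two hypotheses are complementary, so one of (i), (ii) always holds; they exclude each other whenever the range $a_Z < d < b_Z$ contains an integer, and in the borderline case where it does not (for instance when some line carries exactly $a_Z + 2$ of the points) one simply reads (ii) as describing what happens when (i) fails.

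One can instead argue through \Cref{thm:very unexp} with $n = 2$: there $Q$ is a general point, the multiplicity is $d - 1$, and $\dim [I_Q^{d-1}]_d = 2d + 1$ by \Cref{lin space}, so $Z$ has a very unexpected curve of this type precisely when $\dim [I_{Z+(d-1)Q}]_d > 2d+1 - \Exc(Z,d)$. One evaluates $\Exc(Z,d)$ directly: in the defining minimum one may restrict to covers using only lines that contain at least $d+2$ of the points of $Z$, so if no line does then $\Exc(Z,d) = \min\{|Z|,\, 2d+1\}$, whereas a line with many points lowers $\Exc(Z,d)$ — and, in step, $\dim [I_{Z+(d-1)Q}]_d$ — accordingly. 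Comparing $2d+1 - \Exc(Z,d)$ with the quantity $\max\{0,\, 2d+1 - \dim [R/I_Z]_d\}$ that governs ordinary unexpectedness shows that, for a finite set in the plane with no large collinear subset, ``very unexpected'' and ``unexpected'' coincide exactly in the degrees $a_Z < d < b_Z$, while a large collinear subset obstructs both; one more appeal to \Cref{lem:invariants} then reproduces the dichotomy.

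In the first route essentially no work is needed beyond this bookkeeping, because \Cref{range unexpected curves}, \Cref{thm:geomVersion} and \Cref{lem:invariants} do everything; the only delicate point is the vacuous borderline case. In the second route the real obstacle sits inside \Cref{thm:very unexp} itself, whose proof rests on Trok's duality between the unexpected hypersurfaces of $Z$ and the derivation module of the dual line arrangement; it is this duality that turns the numerical inequality $\dim [I_{Z+(d-1)Q}]_d > 2d+1 - \Exc(Z,d)$ into a statement purely about the splitting type and the collinear subsets of $Z$, after which the computation of $\Exc(Z,d)$ is routine.
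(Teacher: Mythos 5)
Your first route is precisely the paper's own justification: the survey deduces Trok's theorem from \Cref{thm:geomVersion} together with the dictionary $m_Z = a_Z$ and $b_Z = |Z|-1-m_Z$ from \Cref{lem:invariants} (so that $m_Z < j \le |Z|-m_Z-2$ becomes $a_Z < j < b_Z$), and notes exactly as you do that a collinear subset of size $a_Z+2$ obstructs all unexpected curves. Your sketched second route via \Cref{thm:very unexp} and $\Exc(Z,d)$ mirrors Trok's original argument but is not what the survey records and is not needed given the first route.
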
 

Trok attributes this result to \cite{CHMN}. In fact, it is equivalent to \Cref{thm:geomVersion}. Note that in the proof of \cite[Corollary 5.7]{CHMN} it is shown that the existence of $m_Z + 2 = a_Z + 2$ collinear points in $Z$ forces $m_Z = t_Z$. Furthermore, \Cref{lem:invariants} implies $2 m_Z + 2 \le |Z|$. If equality is true then the same lemma shows $m_Z = u_Z$, and so $Z$ does not have any  unexpected curve. Observe also that the  stated ranges for the degrees of unexpected curves in \Cref{thm:geomVersion} and in \Cref{thm:plane trok}  are the same as $m_Z = a_Z$ and $b_Z = |Z| - 1 - m_Z$. 

Let $Z \subset \PP^n$ be a finite subset and $Q \subset \PP^n$ be a general linear subspace of codimension two. As discussed above, if $Z$ has a very unexpected hypersurface of degree $d$ with multiplicity $d-1$ at $Q$, then $Z$ has an unexpected hypersurface of degree $d$ with multiplicity $d-1$ at $Q$. 
In \cite[Remark 5.11]{trok}, Trok points out that in this case the converse is true as well if $n=2$.
Thus, \Cref{thm:very unexp} may be viewed as an extension of \Cref{thm:geomVersion} for $\PP^2$ to any $\PP^n$ with $ n \ge 2$. 

Trok also establishes new constraints on sets in the plane that admit an unexpected curve. For example, he shows: 

\begin{theorem}[{\cite[Theorem 7.6]{trok}}]
If a finite set $Z \subset \PP^2_{\CC}$ admits an unexpected curve of degree $d$ then $|Z| \le 3 d - 3$. 
\end{theorem}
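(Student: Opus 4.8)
The plan is to reduce to the case of an \emph{irreducible} unexpected curve and then argue on its normalization. First I would reduce to the smallest degree: if $Z$ has an unexpected curve of degree $d$ then $m_Z<d$ by Theorem~\ref{range unexpected curves}, so $m_Z\le d-1$, and the bound $|Z|\le 3(m_Z+1)-3=3m_Z$ already gives $|Z|\le 3d-3$. By Theorem~\ref{thm:unexp curve structure} there is $Z'\subseteq Z$ whose degree-$(m_{Z'}+1)$ unexpected curve $C:=C_P(Z')$ is irreducible, rational, and smooth away from $P$, with $C_P(Z)$ equal to $C$ together with $|Z\setminus Z'|$ lines through $P$; comparing degrees gives $|Z\setminus Z'|=m_Z-m_{Z'}$, so $|Z|=|Z'|+(m_Z-m_{Z'})\le 2m_{Z'}+m_Z\le 3m_Z$ provided $|Z'|\le 3m_{Z'}$. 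Hence it suffices to prove: if $Z'$ has an irreducible unexpected curve $C$ of degree $e:=m_{Z'}+1$, then $|Z'|\le 3e-3$. Here $e\ge 4$ (a plane unexpected curve has degree at least $4$ by Theorem~\ref{mainThm}(a)), $|Z'|\ge 2e+1$ (Theorem~\ref{thm:geomVersion}), and — since $C$ has geometric genus $0$, arithmetic genus $\binom{e-1}{2}$, a point of multiplicity $e-1$ at $P$, and no other singularity — the point $P$ is an ordinary $(e-1)$-fold point of $C$ and every point of $Z'$ lies in the smooth locus of $C$.

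For the irreducible statement I would work in coordinates with $P=[0:0:1]$. Since any degree-$e$ form with a point of multiplicity $e-1$ at $P$ is a monoid $z\,q_{e-1}(x,y)+q_e(x,y)$, the normalization of $C$ is $\nu\colon\PP^1\to C$, $[s:t]\mapsto[s\,q_{e-1}(s,t):t\,q_{e-1}(s,t):-q_e(s,t)]$, with $\nu^{-1}(P)$ the zero set of $q_{e-1}$. The computational core is: for \emph{any} degree-$e$ form $\Phi_D=z^2r_{e-2}+z\,r_{e-1}+r_e$ vanishing to order $\ge e-2$ at $P$, substitution yields $\Phi_D\circ\nu=q_{e-1}^{\,e-2}\cdot\Psi_D$ with $\deg\Psi_D\le 3e-2$; since $q_{e-1}$ is nonzero at each point of $\nu^{-1}(Z')$, the curve $D$ contains $Z'$ exactly when $\Psi_D$ vanishes at the $w:=|Z'|$ distinct points $\nu^{-1}(Z')$, and the vanishing order of $\Psi_D$ over $q\in Z'$ equals $I_q(D,C)$. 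Producing any $D\ne C$ in $[I_{Z'+(e-2)P}]_e$ (so $\Psi_D\not\equiv 0$) would already give $w\le 3e-2$; to gain the last unit I would produce such a $D$ that is moreover tangent to $C$ at some point $z_1\in Z'$, so that $\Psi_D$ has a double zero over $z_1$ and hence $w+1\le 3e-2$.

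The extra unit is forced by the generality of $P$. The map $Q\mapsto[C_Q]$ is injective on general points, since $\mathrm{Sing}(C_Q)=\{Q\}$; its image is therefore a surface, so its differential has rank two at the general point $P$. For two independent tangent directions $v_1,v_2$ at $P$ the first-order deformations $\dot\Phi_{v_1},\dot\Phi_{v_2}$ are degree-$e$ forms vanishing on $Z'$, and a local computation at $P$ — using that the $(e-1)$-fold point is ordinary, so $q_{e-1}$ is not a power of a linear form — shows each $\dot\Phi_v$ ($v\ne 0$) has multiplicity exactly $e-2$ at $P$ and that $\langle\Phi_C,\dot\Phi_{v_1},\dot\Phi_{v_2}\rangle$ is three-dimensional; hence $\dim[I_{Z'+(e-2)P}]_e\ge 3$. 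Imposing on this space the single linear condition $I_{z_1}(D,C)\ge 2$ for a fixed $z_1\in Z'$ (a smooth point of $C$) leaves a subspace of dimension $\ge 2$ still containing $\langle\Phi_C\rangle$, hence containing some $D\ne C$; for this $D$ one obtains $w\le 3e-3$ by the previous paragraph, and undoing the reduction finishes the proof.

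The step I expect to be the main obstacle is the inequality $\dim[I_{Z'+(e-2)P}]_e\ge 3$ — equivalently, that the family of unexpected curves through $Z'$ genuinely varies in two independent directions at a general $P$ \emph{and} that these deformations lower the multiplicity at $P$ by exactly one (not zero, which would give a multiple of $C$, and not more). This is exactly where generality of $P$ is used essentially, and it is what separates the sharp bound $3d-3$ from the elementary bound $3d-2$ that $\Psi_D$ gives by itself; the remaining ingredients (the reduction, the monoid normalization, and the bookkeeping with $\Psi_D$) are routine, though one should also check that the constraints $|Z'|\ge 2e+1$ and $e\ge 4$ are consistent with — and for $e=4$ actually pin down — the resulting value of $|Z'|$.
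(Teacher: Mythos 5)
The survey states this theorem without proof, citing Trok, so there is no in-paper argument to compare against; Trok's own proof goes through the dual line arrangement via his duality with logarithmic derivations and, as the restriction to $\CC$ in the statement indicates, uses complex-specific input (a Hirzebruch-type inequality on the singularities of the dual arrangement). Your route is genuinely different --- you work directly on the unexpected curve itself --- and as far as I can check it is correct. The reduction to the irreducible minimal-degree case via \Cref{range unexpected curves} and \Cref{thm:unexp curve structure} is clean (the degree count $|Z\setminus Z'|=m_Z-m_{Z'}$ is right, and $m_{Z'}\le m_Z$ since $Z'\subseteq Z$); the identification of $P$ as an ordinary $(e-1)$-fold point follows from $\delta_P=\binom{e-1}{2}=\binom{\mathrm{mult}_P C}{2}$; the factorization $\Phi_D\circ\nu=q_{e-1}^{e-2}\Psi_D$ with $\deg\Psi_D=3e-2$ checks out; and the key inequality $\dim[I_{Z'+(e-2)P}]_e\ge 3$ does follow from differentiating the family $Q\mapsto C_Q(Z')$, because the degree-$(e-2)$ initial form of $\dot\Phi_v$ at $P$ equals $-D_vq_{e-1}$ (independent of the choice of local lift, since changing the lift only adds multiples of $\Phi_C$, which has higher order at $P$), and these are nonzero and linearly independent for independent $v_1,v_2$ exactly because $q_{e-1}$ is squarefree of degree $e-1\ge 3$. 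Your argument works over any algebraically closed field of characteristic zero and explains the extremal case $e=4$, $|Z'|=9$; Trok's yields extra combinatorial information about the dual arrangement.

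Two points to tighten when writing this up. First, the injectivity-plus-generic-smoothness step is superfluous: the local computation already shows that $\Phi_C,\dot\Phi_{v_1},\dot\Phi_{v_2}$ are independent for \emph{any} basis $v_1,v_2$ of the tangent space at $P$, since their initial forms at $P$ have either different orders or independent leading terms; you only need the rational map $Q\mapsto [C_Q(Z')]$ to be regular near the general point $P$ so that a differentiable local lift exists. Second, the ``elementary bound $3e-2$'' is not actually available before the deformation step: producing any $D\ne C$ in $[I_{Z'+(e-2)P}]_e$ is precisely what the naive dimension count fails to give once $|Z'|\ge 3e-2$, so the deformation argument carries the entire proof rather than just the final unit. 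Neither point is a gap, but both affect how the proof should be organized.
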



\section{Geproci sets} \label{brian}

Recall that a finite set $Z\subset \PP^n$ has an unexpected hypersurface in degree $d$ with respect to a general point $P\in\PP^n$ of multiplicity $m$ if
\begin{equation}
\dim [I_Z\cap I_P^m]_d >\max\left\{0, \dim [I_Z]_d -\binom{n+m-1}{n}\right\}.
\end{equation}

An interesting instance of this is that of {\it unexpected cones}; i.e., the case where $d=m$ (\cite{HMNT, HMT}).
In some special cases when $n=3$ there are unexpected cones of two degrees, say degrees $a\leq b$, where the cones meet transversely 
along $ab$ concurrent lines. Examples of this for nondegenerate points $Z\subset \PP^n$ were first noticed for point sets coming from
the root systems $D_4$ and $F_4$ \cite{HMNT, CM}, and later confirmed also for $H_4$ \cite{WZ}. 
This means the intersection of the two cones with a plane $H$ gives a complete intersection of two curves of degrees $a\leq b$ in $H$.
This leads to the notion of an $(a,b)$-{\it geproci} set \cite{CFFHMSS, PSS}.

\begin{definition}
A finite set $Z\subset\PP^3$ is
$(a,b)$-geproci 
if its image $\overline Z$, under projection from a general point $P$ to a plane $H$, is an $(a,b)$-complete intersection for some $a\leq b$. If $Z$ is $(a,b)$-geproci for some $a$ and $b$, we simply say $Z$ is geproci.
\end{definition}

In particular, the $D_4$ configuration is a 12 point $(3,4)$-geproci set, the $F_4$ configuration is a 24 point $(4,6)$-geproci set,
and the $H_4$ configuration is a 60 point $(6,10)$-geproci set (see \cite{CM, WZ}). 

\subsection{Visualizing the $D_4$ and $F_4$ configurations}
The 12 point $D_4$ configuration can be regarded as coming from a cube in 3 point perspective.
The 12 points consist of the 8 vertices of the cube, the center of the cube, and the 3 points of perspective.
In Figure \ref{FigD4}, the center of the cube and the rear vertex are not visible. Note this pair of obscured points defines a line 
$L$ containing the vertex represented in the figure as a white dot. The line $L$, together
with the three lines highlighted in gray give 4 skew lines, each of which contains 3 of the 12 points of the $D_4$ configuration.

The cone over these lines with vertex a general point $P$ gives a quartic surface consisting of 4 planes through
$P$ that contains the 12 points of the $D_4$ configuration. Since the $D_4$ configuration is $(3,4)$-geproci
there must be a cubic cone with vertex $P$ containing all 12 points and meeting each of the 4 lines transversely.
In fact, there are two cubic cones containing the 9 points of the $D_4$ configuration not on $L$:
the cone over the 3 gray lines is one and the cone over the three dashed lines is another.
(Thus these 9 points form a $(3,3)$-grid.)
These two cones define a pencil of cubic cones, all with vertex $P$, and it is clear that in this pencil there
is one cone that contains the white dot. One can verify that this cone contains all 12 points and meets $L$ and each of the 
gray lines transversely. This cubic cone, and the quartic cone from above, are the cones with respect to which the $D_4$ configuration
is $(3,4)$-geproci. 

The $F_4$ configuration also has a visualization based on a cube in 3 point perspective. A cube has 8 vertices;
in Figure \ref{FigF4}, these are shown as black dots (the one at the back of the cube is not visible). These 8 vertices give
$28=\binom{8}{2}$ pairs of points, each of which defines a line, giving 28 distinct lines. Of these 28 lines,
12 are edges of the cube (9 of which are visible in the figure), 12 are diagonals on faces of the cube (6 of which are visible in the figure, as dotted lines), 
and 4 are diagonals through the center
of the cube. These 28 lines meet at only 24 points: 12 of these points of intersection are the points of 
the $D_4$ configuration (10 of which are visible in the figure, 7 of which are black dots; the 3 points of perspective,
which are at infinity, are visible as dotted white dots) and 6 are the centers of faces of the cube (three of which are visible in the figure,
as gray dots). Note that the 12 lines giving 
diagonals on the faces of the cube come in 6 parallel pairs. Each pair meets at a point at infinity, giving
6 more points of intersection for a total of 24 (these are the 6 white dots on the gray lines); 
these 24 points are the points of the $F_4$ configuration, which we see
contains the $D_4$ configuration. 

The 24 points of the $F_4$ configuration can be partitioned into 
6 disjoint sets of 4 collinear points. The cone (with vertex a general point $P$)
over the 6 corresponding lines gives a sextic surface containing all 24 points.
To see one instance of this partition, imagine the cube is sitting with on face on the ground.
Pick a diagonal on a vertical face of the cube. Obtain 3 more diagonals by rotating the cube,
in $90^\circ$ increments,
around the vertical line $V$ through the center of the cube. Each of these 4 diagonals contains 4 points
of the $F_4$ configuration. Four of the remaining 8 points lie on $V$ and the last 4 points
lie on the line at infinity through the 2 points not on $V$ of 3 point perspective of the cube.
These 6 lines are skew and each contains 4 of the 24 points of $F_4$.

To construct the quartic cone with vertex $P$ which contains all 24 points of $F_4$ and which
transversely meets each of the 6 lines defining the sextic cone, consider that these 6 lines include
4 of the 8 diagonals on vertical faces of the cube. The cone (with vertex $P$) over these 4 diagonals
and the cone (with vertex $P$) over the other 4 diagonals on vertical faces of the cube give two 
quartic surfaces each of which contains 16 of the 24 points of $F_4$. 
(These 16 points form a $(4,4)$-grid; the two sets of 4 diagonals are the grid lines.)
Pick one of the 8 remaining points; cal it $p$.
These two quartic cones define a pencil of cones, one member of which must contain
$p$. It turns out this cone contains all 8 of the remaining points and meets the 6 lines transversely.
This quartic cone, and the sextic cone from above, are the cones with respect to which the $F_4$ configuration
is $(4,6)$-geproci. 

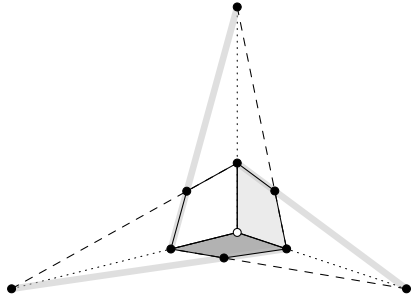
\begin{figure}
\begin{tikzpicture}[line cap=round,line join=round,x=.75cm,y=.75cm]
\clip(-4.496680397937174,0.75) rectangle (5.928312362917013,6.5);
\fill[line width=2.pt,color=black,fill=white] (-1.176350188308613,1.7059124529228467) -- (-0.8953798813051128,2.7315518487365438) -- (0.,3.230935550935551) -- (0.,2.) -- cycle;
\fill[line width=2.pt,color=black,fill=lightgray,fill opacity=0.30000000149011612] (0.,3.230935550935551) -- (0.6656059123409749,2.7359609200174266) -- (0.8751724137931034,1.7082758620689655) -- (0.,2.) -- cycle;
\fill[line width=2.pt,color=black,fill=gray,fill opacity=0.60000000149011612] (-1.176350188308613,1.7059124529228467) -- (-0.23559785204630612,1.546900694008833) -- (0.8751724137931034,1.7082758620689655) -- (0.,2.) -- cycle;
\draw [dotted,line width=0.4pt] (-4.,1.)-- (0.,2.);
\draw [dotted,line width=0.4pt] (0.,2.)-- (0.,6.);
\draw [dotted,line width=0.4pt] (0.,2.)-- (3.,1.);
\draw [line width=2.5pt,color=lightgray,opacity=.5] (-4.,1.)-- (0.8751724137931034,1.7082758620689655);
\draw [line width=2.5pt,color=lightgray,opacity=.5] (0.,3.230935550935551)-- (3.,1.);
\draw [line width=2.5pt,color=lightgray,opacity=.5] (-1.176350188308613,1.7059124529228467)-- (0.,6.);
\draw [line width=0.4pt,color=black] (-1.176350188308613,1.7059124529228467)-- (-0.8953798813051128,2.7315518487365438);
\draw [line width=0.4pt,color=black] (-0.8953798813051128,2.7315518487365438)-- (0.,3.230935550935551);
\draw [line width=0.4pt,color=black] (0.,3.230935550935551)-- (0.,2.);
\draw [line width=0.4pt,color=black] (0.,2.)-- (-1.176350188308613,1.7059124529228467);
\draw [line width=0.4pt,color=black] (0.,3.230935550935551)-- (0.6656059123409749,2.7359609200174266);
\draw [line width=0.4pt,color=black] (0.6656059123409749,2.7359609200174266)-- (0.8751724137931034,1.7082758620689655);
\draw [line width=0.4pt,color=black] (0.8751724137931034,1.7082758620689655)-- (0.,2.);
\draw [line width=0.4pt,color=black] (0.,2.)-- (0.,3.230935550935551);
\draw [line width=0.4pt,color=black] (-1.176350188308613,1.7059124529228467)-- (-0.23559785204630612,1.546900694008833);
\draw [line width=0.4pt,color=black] (-0.23559785204630612,1.546900694008833)-- (0.8751724137931034,1.7082758620689655);
\draw [line width=0.4pt,color=black] (0.8751724137931034,1.7082758620689655)-- (0.,2.);
\draw [line width=0.4pt,color=black] (0.,2.)-- (-1.176350188308613,1.7059124529228467);
\begin{scriptsize}
\draw [fill=black] (-4.,1.) circle (1.5pt);
\draw [fill=white] (0.,2.) circle (1.5pt);
\draw [fill=black] (3.,1.) circle (1.5pt);
\draw [fill=black] (0.,6.) circle (1.5pt);
\draw [fill=black] (-1.176350188308613,1.7059124529228467) circle (1.5pt);
\draw [fill=black] (0.8751724137931034,1.7082758620689655) circle (1.5pt);
\draw [fill=black] (-0.23559785204630612,1.546900694008833) circle (1.5pt);
\draw [fill=black] (0.,3.230935550935551) circle (1.5pt);
\draw [fill=black] (-0.8953798813051128,2.7315518487365438) circle (1.5pt);
\draw [fill=black] (0.6656059123409749,2.7359609200174266) circle (1.5pt);
\end{scriptsize}
\draw [line width=0.4pt,dashed] (3.,1.)-- (-1.176350188308613,1.7059124529228467);
\draw [line width=0.4pt,dashed] (-4.,1.)-- (0.,3.230935550935551);
\draw [line width=0.4pt,dashed] (0.,6.)-- (0.8751724137931034,1.7082758620689655);
\end{tikzpicture}
\caption{The $D_4$ configuration regarded as coming from a cube in 3 point perspective.}\label{FigD4}
\end{figure}

\begin{figure}
\definecolor{zzttqq}{rgb}{0,0,0}
\definecolor{uuuuuu}{rgb}{0.26666666666666666,0.26666666666666666,0.26666666666666666}
\definecolor{ffqqqq}{rgb}{1.,0.,0.}
\begin{tikzpicture}[line cap=round,line join=round,x=1.0cm,y=1.0cm]
\clip(-3.9394825089949665,0.1406056699447517) rectangle (6.485510251859221,8.01327261693464);
\fill[line width=2.pt,color=zzttqq,fill=white,fill opacity=0.5] (-1.3781513347782868,1.8839289642075294) -- (-0.7462561422331093,3.223845547475148) -- (-0.4266366338291549,3.487779668718635) -- (-0.7760364298392128,2.153707720316595) -- cycle;
\fill[line width=2.pt,color=zzttqq,fill=lightgray,fill opacity=0.3] (-0.4266366338291549,3.487779668718635) -- (0.06769653108008261,2.9596182039831778) -- (0.28206782853952683,1.7116230644186283) -- (-0.7760364298392128,2.153707720316595) -- cycle;
\fill[line width=2.pt,color=zzttqq,fill=gray,fill opacity=0.6] (-1.3781513347782868,1.8839289642075294) -- (-0.2623839081803193,1.5980502249179955) -- (0.28206782853952683,1.7116230644186283) -- (-0.7760364298392128,2.153707720316595) -- cycle;
\draw [line width=0.2pt] (-3.5440517491004977,0.9134930642839415)-- (-0.7760364298392128,2.153707720316595);
\draw [line width=0.2pt] (-0.7760364298392128,2.153707720316595)-- (-0.1828902899975091,4.418447526984919);
\draw [line width=0.2pt] (-0.7760364298392128,2.153707720316595)-- (1.8481858858240823,1.0572860678819302);
\draw [line width=0.2pt] (-3.5440517491004977,0.9134930642839415)-- (0.28206782853952683,1.7116230644186283);
\draw [line width=0.2pt] (1.8481858858240823,1.0572860678819302)-- (-1.3781513347782868,1.8839289642075294);
\draw [line width=0.2pt] (-3.5440517491004977,0.9134930642839415)-- (-0.4266366338291549,3.487779668718635);
\draw [line width=0.2pt] (-0.4266366338291549,3.487779668718635)-- (1.8481858858240823,1.0572860678819302);
\draw [line width=0.2pt] (-1.3781513347782868,1.8839289642075294)-- (-0.1828902899975091,4.418447526984919);
\draw [line width=0.2pt] (-0.1828902899975091,4.418447526984919)-- (0.28206782853952683,1.7116230644186283);
\draw [line width=1.pt,color=zzttqq] (-1.3781513347782868,1.8839289642075294)-- (-0.7462561422331093,3.223845547475148);
\draw [line width=1.pt,color=zzttqq] (-0.7462561422331093,3.223845547475148)-- (-0.4266366338291549,3.487779668718635);
\draw [line width=1.pt,color=zzttqq] (-0.4266366338291549,3.487779668718635)-- (-0.7760364298392128,2.153707720316595);
\draw [line width=1.pt,color=zzttqq] (-0.7760364298392128,2.153707720316595)-- (-1.3781513347782868,1.8839289642075294);
\draw [line width=1.pt,color=zzttqq] (-0.4266366338291549,3.487779668718635)-- (0.06769653108008261,2.9596182039831778);
\draw [line width=1.pt,color=zzttqq] (0.06769653108008261,2.9596182039831778)-- (0.28206782853952683,1.7116230644186283);
\draw [line width=1.pt,color=zzttqq] (0.28206782853952683,1.7116230644186283)-- (-0.7760364298392128,2.153707720316595);
\draw [line width=1.pt,color=zzttqq] (-0.7760364298392128,2.153707720316595)-- (-0.4266366338291549,3.487779668718635);
\draw [line width=1.pt,color=zzttqq] (-1.3781513347782868,1.8839289642075294)-- (-0.2623839081803193,1.5980502249179955);
\draw [line width=1.pt,color=zzttqq] (-0.2623839081803193,1.5980502249179955)-- (0.28206782853952683,1.7116230644186283);
\draw [line width=1.pt,color=zzttqq] (0.28206782853952683,1.7116230644186283)-- (-0.7760364298392128,2.153707720316595);
\draw [line width=1.pt,color=zzttqq] (-0.7760364298392128,2.153707720316595)-- (-1.3781513347782868,1.8839289642075294);
\draw [line width=2pt,color=lightgray] plot(\x,{(--15.49213767488207--3.5049544627009777*\x)/3.3611614591029886});
\draw [line width=2pt,color=lightgray] plot(\x,{(--8.359479712193101-3.361161459102989*\x)/2.031076175821591});
\draw [line width=2pt,color=lightgray] (-4,0.9134930642839415) -- (6.4,1.18);
\draw [line width=1.pt,dotted] (-0.7760364298392128,2.153707720316595)-- (-0.7288555655933644,3.8491254481550174);
\draw [line width=1.pt,dotted] (-1.3781513347782868,1.8839289642075294)-- (0.6257878555114629,5.261721528986253);
\draw [line width=1.pt,dotted] (-0.7760364298392128,2.153707720316595)-- (0.4677446196935667,3.341733118912077);
\draw [line width=1.pt,dotted] (0.28206782853952683,1.7116230644186283)-- (-1.9936296081347535,7.414980734875758);
\draw [line width=1.pt,dotted] (-0.7760364298392128,2.153707720316595)-- (0.27625059723465695,1.0153677935195455);
\draw [line width=1.pt,dotted] (-1.3781513347782868,1.8839289642075294)-- (5.618143550697727,1.1578182722785608);
\begin{scriptsize}
\draw [fill=white] (-3.5440517491004977,0.9134930642839415) circle (2.5pt);
\draw [fill=black] (-3.5440517491004977,0.9134930642839415) circle (.5pt);
\draw [fill=black] (-0.7760364298392128,2.153707720316595) circle (2.5pt);
\draw [fill=white] (1.8481858858240823,1.0572860678819302) circle (2.5pt);
\draw [fill=black] (1.8481858858240823,1.0572860678819302) circle (.5pt);
\draw [fill=white] (-0.1828902899975091,4.418447526984919) circle (2.5pt);
\draw [fill=black] (-0.1828902899975091,4.418447526984919) circle (.5pt);
\draw [fill=black] (-1.3781513347782868,1.8839289642075294) circle (2.5pt);
\draw [fill=black] (0.28206782853952683,1.7116230644186283) circle (2.5pt);
\draw [fill=uuuuuu] (-0.2623839081803193,1.5980502249179955) circle (2.5pt);
\draw [fill=black] (-0.4266366338291549,3.487779668718635) circle (2.5pt);
\draw [fill=uuuuuu] (-0.7462561422331093,3.223845547475148) circle (2.5pt);
\draw [fill=uuuuuu] (0.06769653108008261,2.9596182039831778) circle (2.5pt);
\draw [fill=white] (-0.7288555655933644,3.8491254481550174) circle (2.0pt);
\draw [fill=white] (0.6257878555114629,5.261721528986253) circle (2.0pt);
\draw [fill=white] (0.27625059723465695,1.0153677935195455) circle (2.0pt);
\draw [fill=white] (5.618143550697727,1.1578182722785608) circle (2.0pt);
\draw [fill=white] (0.4677446196935667,3.341733118912077) circle (2.0pt);
\draw [fill=white] (-1.9936296081347535,7.414980734875758) circle (2.0pt);
\draw [fill=lightgray] (-0.7542800291228681,2.935511700766003) circle (2.5pt);
\draw [fill=lightgray] (-0.13763731444147465,2.7634889706566796) circle (2.5pt);
\draw [fill=lightgray] (-0.43629001624968966,1.786177856672359) circle (2.5pt);
\end{scriptsize}
\end{tikzpicture}
\caption{The $F_4$ configuration regarded as coming from a cube in 3 point perspective.}\label{FigF4}
\end{figure}
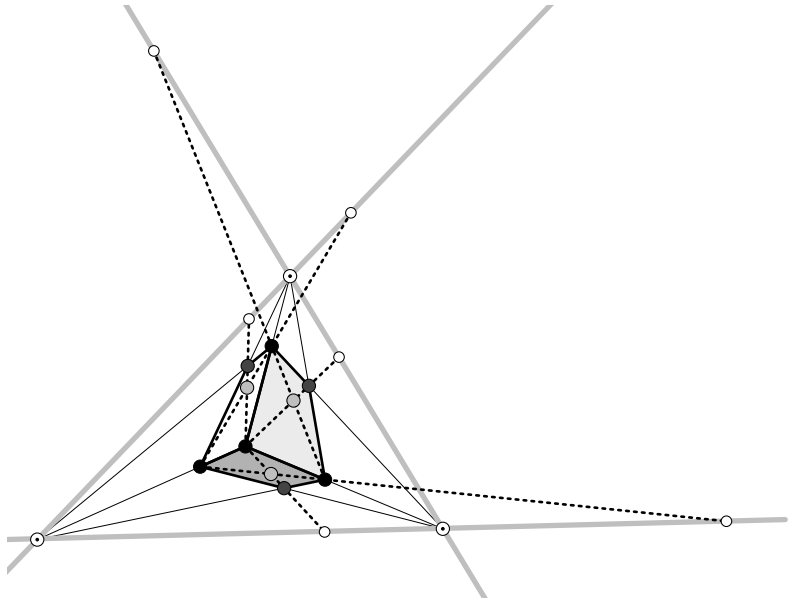

\subsection{The four-fold way}\label{4foldWay}
Geproci sets arise in four ways. First are degenerate sets of points; i.e., a finite set of points 
contained in a plane where the set is already a complete intersection in that plane.
Second are $(a,b)$-{\it grids}; i.e., nondegenerate finite sets of points in $\PP^3$ which are the 
intersection of space curves $A$ and $B$ where $A$ consists of $a$ skew lines and $B$ 
consists of $b$ skew lines, and each component of $A$ meets every component of $B$.
Third are {\it half grids} \cite{PSS}: a finite nondegenerate nongrid set $Z$ is a half grid if it is a geproci set 
whose image $\overline Z$ to a plane under projection from a general point is a complete intersection of two curves, exactly one of which can always be taken to be a union of lines.
(This definition differs slightly from that of \cite{PSS}.) Fourth are the finite nondegenerate nongrid and non-half grid geproci sets
(i.e., this category consists of all geproci sets not in the first three categories). 

Since degenerate geproci sets and grids are well understood, we group them together and call them {\it trivial}.
The {\it nontrivial} geproci sets are the half grids and the nondegenerate non-grid non-half grids.

To see that grids are indeed well understood, note that
the $(2,b)$-grids are exactly the sets of $b$ points on each of two skew lines.
The $(a,b)$-grids with $3\leq a\leq b$ always lie on some smooth quadric surface $Q$,
and are given by taking $A$ to be any set of $a$ lines in one ruling on $Q$ and 
$B$ to be any set of $b$ lines in the other ruling on $Q$.

The nontrivial non-half grids are much less well understood.
The $H_4$ configuration is one of only three currently known examples of a nontrivial geproci non-half grid \cite{WZ, CFFHMSS}.

The half grids are somewhat better understood, at least in the sense that we have many more examples of them
(see Theorem \ref{geographyThm}), beginning with the $D_4$ and $F_4$ configurations. 
The $D_4$ and $F_4$ configurations are half grids
because (as we saw above) one of the cones
with respect to which they are geproci can be taken to be a union of planes.
(In fact, each configuration can be partitioned into collinear subsets defining
skew lines; we saw above that $D_4$ consist of 3 points on each of 4 skew lines, while
$F_4$ consists of 4 points on each of 6 skew lines. It is not obvious that every case of $(a,b)$-geproci half grids 
behaves this way, but not yet published follow up work of the authors of \cite{CFFHMSS} shows that they always consist of either $a$ points on each of $b$ skew lines or
$b$ points on each of $a$ skew lines.)
The other cone with respect to which the $D_4$ and $F_4$ configurations are geproci
(namely the cubic cone for $D_4$ and the quartic cone for $F_4$)
is unique and not a union of planes, and so the $D_4$ and $F_4$ configurations
are half grids and not grids.

\subsection{History}
The question of whether nontrivial geproci sets exist goes back only to 2011 \cite{Polizzi}
(see also \cite[Question 15]{CDFPGR}). At that time the only examples known were what we now call trivial; i.e., either
degenerate or grids. Some unrecognized nontrivial examples were studied in the paper \cite{HMNT};
their significance as nontrivial geproci sets was first noticed during a workshop at a conference
on Lefschetz properties at Levico Terme in 2018 (see the appendix to \cite{CM}).
Additional nontrivial examples were announced at a workshop in 2020 at Oberwolfach \cite{PSS},
which led to the monograph \cite{CFFHMSS}. 

\subsection{Main results}
The paper \cite{CFFHMSS}, by extending results of \cite{CM}, obtains the following classification of nontrivial $(a,b)$-geproci sets 
over the complex numbers for small $a$:

\begin{theorem}[\cite{CFFHMSS}]
Let $Z$ be a nontrivial $(a,b)$-geproci set with $a\leq3$ over the complex numbers. 
Then $|Z|=12$ and $Z$ is projectively equivalent to the $D_4$ configuration.
\end{theorem}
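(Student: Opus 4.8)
The plan is to reduce immediately to $a=3$ and then to extract both the value of $b$ and the projective model from the cone structure that geprociness forces. First I would dispose of small $a$: a $(1,b)$-geproci set is $b$ collinear points, hence degenerate and trivial, while a $(2,b)$-geproci set is necessarily degenerate or a grid (basic structure theory, cf.\ \cite{CM} and \S\ref{4foldWay}), hence again trivial. So in the nontrivial case we have $a=3$ and $Z$ is nondegenerate, and since the general projection $\overline Z$ consists of $ab=3b$ points it remains to prove $b=4$ and that $Z$ is projectively equivalent to $D_4$.

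Next I would set up the cones. Fix a general point $P\in\PP^3$. Since $\overline Z\subset H$ lies on a plane cubic and on a plane curve of degree $b$, pulling back along the projection from $P$ produces a cubic cone $\mathcal C_P$ and a degree-$b$ cone $\mathcal D_P$, both with vertex $P$ and both containing $Z$; moreover $\mathcal C_P\cap\mathcal D_P$ is the cone over $\overline Z$, a union of $3b$ distinct lines through $P$, and $Z$ meets each such line in exactly one point. When $b>3$ the cubic through $\overline Z$ is unique, so $\mathcal C_P$ is determined by $P$; when $b=3$ one instead gets a pencil of cubic cones through $Z$ with vertex $P$ (corresponding to the pencil of plane cubics through the nine points $\overline Z$), and I would carry this case along in parallel. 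Either way $Z$ carries an unexpected cubic cone with vertex every general $P$, which I would first exploit to pin down $\dim [I_Z]_3$ and the relevant part of the Hilbert function of $Z$.

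The heart of the argument is to classify $\mathcal C_P$ by its decomposition type: a cubic cone with vertex $P$ is either (i) a union of three planes through $P$, (ii) a plane through $P$ together with an irreducible quadric cone with vertex $P$, or (iii) an irreducible cubic cone; and by semicontinuity the type is the same for all general $P$. In cases (i) and (ii) I would split $Z$ according to the components of $\mathcal C_P$ and use that each planar component must pass through the \emph{general} point $P$: a subset of $Z$ spanning a plane would fix that plane and hence could not contain a general $P$, so each such subset is forced to be collinear, and a subset lying on an irreducible quadric cone with general vertex is likewise forced into a very special position. Tracking these incidences through all general $P$ (and matching with the second cone $\mathcal D_P$) forces $Z$ to lie on a quadric in two complementary rulings, i.e.\ forces $Z$ to be a grid — contradicting nontriviality, or else yields an outright contradiction. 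Hence $\mathcal C_P$ is an irreducible cubic cone for every general $P$, so $Z$ lies on a cone over an irreducible plane cubic (arithmetic genus $1$, geometric genus $\le 1$).

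Finally I would bound $b$ and recover the model. With $\mathcal C_P$ irreducible, the $3b$ lines of $\mathcal C_P$ carrying $Z$ correspond to $3b$ points on the normalization of that plane cubic (a $\PP^1$ or an elliptic curve), and the second cone $\mathcal D_P$ cuts these out; letting the vertex $P$ vary in its three-dimensional family while $Z$ stays fixed, the interplay of the two cone systems forces $b=4$, hence $|Z|=12$, and simultaneously rules out $b=3$. Once $|Z|=12$ with $Z$ a nontrivial $(3,4)$-geproci set, the half-grid structure (four skew lines carrying three points each, from the quartic cone $\mathcal D_P$) together with the irreducible cubic cone through all twelve points matches the cube-in-$3$-point-perspective description of \S\ref{4foldWay}, so the incidences determine $Z$ up to projective equivalence; alternatively one invokes the finer classification of twelve-point geproci sets in \cite{CM}. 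The main obstacle is exactly this last step: the reducible cases collapse quickly, but ruling out $(3,b)$-geproci sets for $b\ge 5$ (and $b=3$) when the cubic cone is irreducible, and extracting the rigidity that pins down the $D_4$ normal form, requires the detailed geometry of the two moving cone systems and is where the real work lies.
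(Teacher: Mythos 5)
First, a point of orientation: the survey you are reading does not prove this theorem at all — it states it with a citation to \cite{CFFHMSS} — so there is no in-paper argument to compare yours against, and your proposal has to stand on its own. The preliminary reductions in your sketch are sound and do match how the literature proceeds: a $(1,b)$-geproci set is collinear, the $(2,b)$ case (nondegenerate implies grid) is genuinely a theorem of \cite{CM}, and the lever you use in the reducible cases — a plane component of a cone whose vertex is a \emph{general} point can only carry a collinear subset of $Z$, since a planar but non-collinear subset would pin the plane down — is the correct and standard one. Setting up the cubic cone $\mathcal C_P$ and the degree-$b$ cone and trichotomizing by the decomposition type of $\mathcal C_P$ is also consistent with how \cite{CM} and \cite{CFFHMSS} attack these problems.

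The genuine gap is exactly where you place it, and it is not a small one: in the case where $\mathcal C_P$ is irreducible you assert that ``the interplay of the two cone systems forces $b=4$'' without giving any mechanism, yet ruling out nontrivial $(3,b)$-geproci sets for $b=3$ and for every $b\ge 5$ is the entire content of the theorem; nothing in your sketch produces a numerical or geometric contradiction for those $b$. Moreover, the half-grid structure you lean on at the end ($3$ points on each of $b$ skew lines, read off from $\mathcal D_P$ being a union of planes) is itself a nontrivial theorem about geproci sets, not a consequence of the definition — a priori $\mathcal D_P$ need not be a union of planes. Finally, the rigidity step is circular as written: you propose to ``invoke the finer classification of twelve-point geproci sets in \cite{CM},'' but \cite{CM} contains no such classification (its appendix only exhibits $D_4$ as an example); the uniqueness of the $(3,4)$ configuration up to projective equivalence is part of what is being proved in \cite{CFFHMSS}. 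So the proposal is a reasonable roadmap whose easy legs are correct, but the two decisive steps — excluding $b\ne 4$ in the irreducible case and pinning down the $D_4$ normal form — are asserted rather than proved.
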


In particular, every nondegenerate
$(2,b)$-geproci set is a grid 
(and hence trivial and contained in a smooth quadric), 
and every nondegenerate $(3,b)$-geproci
set is either a grid (and hence trivial and contained in a smooth quadric)
or it is projectively equivalent to the $D_4$ configuration (and so not contained in a quadric).

Recent work \cite{CFFHMSS2} extends this classification to the case of $(4,4)$-geproci sets,
showing that there are only two nontrivial $(4,4)$-geproci sets, both already known.

By an explicit construction motivated by $D_4$ and $F_4$, \cite{CFFHMSS} also obtains the following result over the complex numbers:

\begin{theorem}[\cite{CFFHMSS}]\label{geographyThm}
For each $4\leq a\leq b$, there is an $(a,b)$-geproci half grid.
\end{theorem}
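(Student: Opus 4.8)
The plan is to build the desired $(a,b)$-geproci half grids explicitly, generalizing the constructions that produce the $D_4$ and $F_4$ configurations. The starting observation is Proposition~\ref{cone in P3}: any reduced, equidimensional, non-degenerate curve $C \subset \PP^3$ of degree $b$ admits a unique unexpected surface of degree $b$ with multiplicity $b$ at a general point $P$, namely the cone over $C$ with vertex $P$. We want to take $C$ to be a union of $b$ skew lines, so that this cone is a union of $b$ planes through $P$; projecting from $P$ to a plane $H$ then sends $C$ to a union of $b$ lines in $H$. If in addition the chosen point set $Z$ (consisting of some number of points on each of these $b$ skew lines) projects to a complete intersection of that union of $b$ lines with a curve of degree $a$, then $Z$ is $(a,b)$-geproci, and because one of the two cones is a union of planes, it is a half grid (provided it is not a grid).

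First I would fix the combinatorial skeleton. Take $b$ general skew lines $\ell_1,\dots,\ell_b$ in $\PP^3$; their union $C$ is non-degenerate of degree $b$, so Proposition~\ref{cone in P3} applies and the cone $\mathcal C_P(C)$ over $C$ from a general $P$ is a union of $b$ planes. Projection $\pi_P\colon \PP^3 \dashrightarrow H$ carries each $\ell_i$ isomorphically to a line $L_i \subset H$, and the $L_i$ are in general position. Now I need to choose $a$ points on each $\ell_i$ — call the resulting set $Z$, with $|Z| = ab$ — so that $\overline Z = \pi_P(Z)$ is cut out on $\bigcup L_i$ by a single curve of degree $a$. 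The number of conditions is the crux: a degree-$a$ curve in $H \cong \PP^2$ has $\binom{a+2}{2}$ coefficients, and passing through the $ab$ points of $\overline Z$ imposes at most $ab$ conditions; meanwhile a degree-$a$ form restricted to a line $L_i$ has $a+1$ coefficients, and we are asking it to vanish at $a$ prescribed points of $L_i$, i.e. to be (up to scalar) the product of the $a$ corresponding linear factors along $L_i$. So on each $L_i$ the restriction is determined up to a constant; the real content is whether the $b$ constants can be matched simultaneously, i.e. whether there is a global degree-$a$ form whose restriction to each $L_i$ is the prescribed degree-$a$ divisor. Equivalently, one needs the $b$ divisors $D_i$ (of degree $a$ on $L_i$) to be "compatible": $D_i$ and $D_j$ must agree at the point $L_i \cap L_j$ whenever that intersection point is supposed to lie in $\overline Z$. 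The cleanest way to guarantee compatibility is to arrange that the $ab$ points $\overline Z$ actually lie on a fixed smooth curve of degree $a$, or — in the half-grid spirit — to choose the points on the lines $\ell_i$ to be pullbacks of a complete intersection structure, and then argue by a dimension/semicontinuity count that the generic such choice works.

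Concretely, the key step I expect to carry the weight is a dimension count showing that the locus of curves $A$ of degree $a$ in $H$ meeting $\bigcup L_i$ in $ab$ points is nonempty and that one can lift a suitable $ab$-point subset of $A \cap (\bigcup L_i)$ back through $\pi_P$ to a set $Z$ of $a$ points on each $\ell_i$ whose projection is exactly $A \cap (\bigcup L_i)$; this is automatic since $\pi_P$ restricts to an isomorphism $\ell_i \to L_i$, so $Z := \bigcup_i \pi_P^{-1}(A \cap L_i)$ does the job, giving $|Z| = ab$ with $a$ points on each line. One then must check (i) $Z$ is non-degenerate — true because the $\ell_i$ span $\PP^3$ for $b \ge 4$ and generic choices keep the $ab$ points from falling into a plane; (ii) $\overline Z = A \cap (\bigcup_{i} L_i)$ is genuinely a complete intersection of $A$ with $\bigcup L_i$ (the scheme-theoretic intersection is reduced of the right length $ab$ for general position); (iii) $Z$ is not a grid — this is where I would invoke the uniqueness clause: the second cone with respect to which $Z$ is geproci is the cone over $A$ pulled back appropriately, and for generic data it is irreducible (not a union of planes), exactly as for the $D_4$ cubic cone and the $F_4$ quartic cone; alternatively, a grid would force $Z$ to lie on a smooth quadric, and a generic configuration of $ab$ points distributed $a$-per-line on $b$ general skew lines does not. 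The main obstacle is precisely establishing (iii) together with the claim that $A$ can be chosen so that $A \cap (\bigcup L_i)$ has no unexpected coincidences forcing a grid structure — i.e., separating the genuinely non-grid half grids from the grids — and this is where the careful explicit construction of \cite{CFFHMSS}, motivated by the $D_4$ and $F_4$ pictures described above, does the real work; the rest is the dimension bookkeeping sketched here plus Proposition~\ref{cone in P3} to certify the unexpected surface and hence the half-grid (not full-grid) conclusion.
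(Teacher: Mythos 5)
The survey itself gives no proof of this theorem --- it is quoted from \cite{CFFHMSS}, where it is obtained by an explicit construction --- so the comparison here is with the statement's actual content rather than with an argument in the text. Your sketch has a genuine gap that dimension bookkeeping cannot close: the geproci condition requires the projection from \emph{every} general point of $\PP^3$ to be a complete intersection, whereas your construction arranges it only for the single auxiliary point $P$ used to build $Z$. You fix $P$, project the $b$ skew lines to lines $L_i\subset H$, choose a degree-$a$ curve $A$ transverse to $\bigcup L_i$, and lift $A\cap(\bigcup L_i)$ back to get $Z$. By construction $\pi_P(Z)$ is then an $(a,b)$-complete intersection, but $Z$ is now a fixed set and nothing controls $\pi_{P'}(Z)$ for a different general point $P'$. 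The degree-$b$ half is harmless (the cone over $\bigcup\ell_i$ from any $P'$ is a union of $b$ planes, so $\pi_{P'}(Z)$ always lies on $b$ lines), but the existence of a degree-$a$ curve through the $ab$ points of $\pi_{P'}(Z)$ imposes $ab$ conditions on a space of dimension $\binom{a+2}{2}$, and $ab\geq\binom{a+2}{2}$ for $a\geq 4$; so for a generic choice of data no such curve exists, and its existence for all general $P'$ is precisely the unexpected-cone condition you would need to prove. If your argument as written were sufficient, a generic distribution of $a$ points on each of $b$ general skew lines would be geproci, which contradicts the rarity of nontrivial geproci sets documented throughout Section \ref{brian} (compare the classification quoted there: every nondegenerate $(3,b)$-geproci set is a grid or the $D_4$ configuration).

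The same gap reappears in your step (iii): you correctly identify the combinatorial skeleton ($a$ points on each of $b$ skew lines, one cone a union of planes, the other irreducible) and correctly reduce ``half grid, not grid'' to the second cone not being a union of planes, but you give no mechanism for choosing the points on the $\ell_i$ so that the complete-intersection property holds from every general vertex. The construction in \cite{CFFHMSS} is not a genericity argument; it uses very special configurations (specific choices of lines and of the points on them, modeled on how $D_4$ sits inside $F_4$) exactly because ``complete intersection from every general projection point'' is a strong closed condition on the $ab$ points. Your proposal defers this --- the actual content of the theorem --- to ``the careful explicit construction of \cite{CFFHMSS}'', i.e., to the result being proved.
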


However, only three examples of nontrivial non-half grids are currently known over the complex numbers: 
the $H_4$ configuration \cite{WZ}, a Gorenstein set of 40 points \cite{CFFHMSS} and a set of 120 points \cite{CFFHMSS}.

\subsection{Generalizations}
One could consider extending the definition of geproci sets to varieties $Z\subset \PP^n$ whose image in a hyperplane under projection from
a general point $P$ is a complete intersection, but no nondegenerate examples are known when $Z$ has codimension greater than 3 for $n>3$,
and only obvious examples coming from taking cones over geproci sets in $\PP^3$ are known when $Z$ has codimension 3.

The geproci property however can be regarded as a special case of a more general notion
which carries over to mathematics the idea of studying structures by inverse scattering.
In an inverse scattering problem one tries to infer the structure of a 3 dimensional object from scattering data.
For example, in tomography, one in essence uses x-rays to project, from various points
in space, a 3 dimensional object as a planar picture, and then aims to recover the internal structure of the original object.
For the geproci property we specify the property of the projection (viz., it is a complete intersection) and we aim
to determine the possible structures of the original point sets. More generally, we could study any specific property
$\mathcal P$ and try to classify what point sets project (under projection from a general point) to sets with property $\mathcal P$
(i.e., we could try to classify the gepro-$\mathcal P$ sets).

\subsection{Open Questions.}

We say a finite set $Z\subset\PP^3$ satisfies $C(t)$ if $Z$ has an unexpected surface of degree $t$ with respect to a general point $t$ of multiplicity $t$; i.e., if it has an unexpected cone of degree $t$.
 
By \cite[Corollary 7.15]{CFFHMSS}, $(2, b)$-grids with $b \geq 2$ do not satisfy $C(b)$, but
by \cite[Theorem 3.5]{CM}, if $Z$ is an $(a,b)$-grid with $a = 2 < b$, then $Z$ satisfies $C(a)$, and $Z$ satisfies $C(b)$ if $2 < a \leq b$. 
Moreover, no nontrivial $(a,b)$-geproci set is known which fails to satisfy both $C(a)$ and $C(b)$.

\begin{question}
Let $Z$ be a nontrivial $(a,b)$-geproci set. Must $Z$ satisfy both $C(a)$ and $C(b)$?
\end{question}

Another open problem involves linear general position (or LGP). A finite set of points $Z\subset\PP^n$ 
is said to be in {\it linear general position} if every subset of $r\leq n+1$ points spans a linear space of dimension
$r-1$. Thus a finite set $Z\subset\PP^3$ in linear general position can have no 3 (or more) points on a line,
and no 4 (or more) points in a plane.

The only known nondegenerate geproci set in linear general position are the $(2,2)$-grids; i.e., sets of 4 points which span $\PP^3$.
All other known examples of nondegenerate geproci sets have a subset of at least 3 collinear points.
This raises the following open question:

\begin{question}\label{LGPques}
Let $Z$ be a nontrivial $(a,b)$-geproci set. 
\begin{enumerate}
\item[(a)] Can $Z$ be in linear general position?
\item[(b)] Must $Z$ have some subset of 3 collinear points?
\end{enumerate}
\end{question}

An easy example of a special set of points in the plane which is not in LGP is 6 or more points on
an irreducible conic. Grids are always on smooth quadrics; 
perhaps quadrics are a place to look for geproci sets which are not in LGP.

\begin{question}
If $Z$ is a nondegenerate geproci set contained in a smooth quadric, must $Z$ be a grid?
\end{question}

A 40 point $(5,8)$-geproci set stands out as currently the only known 
example of a nontrivial geproci set which is also arithmetically Gorenstein \cite[Problem 12]{CFFHMSS}.

\begin{question}
What other nontrivial geproci sets are arithmetically Gorenstein (if any)?
\end{question}

Moduli type questions are also of interest. Using cross-ratios, for any $3\leq a\leq b$ it is easy to see that there are uncountably many
$(a,b)$-grids, no two of which are projectively equivalent. 
Recent work by the authors of \cite{CFFHMSS} shows that there are continuous families
of $(a,mb)$-geproci half grids contained in $a$ skew lines with $mb$ points per line such that
the family contains infinitely many pairwise non-projectively equivalent $(a,mb)$-geproci sets,
whenever $m\geq2$, $b\geq3$, $4\leq a\leq b+1$. These families are based on there being
continuous families of projectively equivalent geproci sets of
$ab$ points with $b$ points on each of the $a$ lines, and building new families
by taking unions of $m$ copies of these sets on the same $a$ lines (in analogy with there being 
continuous families of $(a,b)$-grids coming from unions of $b$ copies of $(1,a)$-grids
with each copy being contained in the same $a$ lines with 1 point per line).
However, for each $3\leq a\leq b$ only finitely many 
nontrivial $(a,b)$-geproci sets are known (and none if $a=3$ \cite{CFFHMSS}) 
up to projectively equivalence
for sets which are not unions of smaller geproci halfgrids on $a$ skew lines.

\begin{question}
Can there be infinitely many nontrivial non-projectively equivalent $(a,b)$-geproci sets 
which are not unions of smaller geproci halfgrids on $a$ skew lines?
\end{question}

Related to this and to Question \ref{LGPques}, one can ask to what extent a nontrivial geproci set is determined by its combinatorics,
where by the {\it combinatorics of a finite set} $Z$ we mean the matroid of linear dependencies among its points, or equivalently
the function which assigns to each subset $S\subset Z$ the dimension of the linear space spanned by $S$. 
Thus two finite sets $Z_1, Z_2\subset \PP^3$ have the same combinatorics if there is a bijection $f:Z_1\to Z_2$ such that
for each subset $S\subset Z_1$ the dimension of the span of $S$ is equal to the dimension of the span of $f(S)$.
Also, by the {\it weak combinatorics of a finite set} $Z$, we will mean the incidence matrix of the points of $Z$ with
the lines through pairs of points of $Z$.

\begin{question}
Let $Z$ be a nontrivial $(a,b)$-geproci set which is
not a union of smaller geproci halfgrids on $a$ skew lines.
\begin{enumerate}
\item[(a)] If $Z'$ is a set with the same combinatorics as $Z$, must
$Z'$ also be a nontrivial $(a,b)$-geproci set, or must $Z'$ even be projectively equivalent to $Z$? 
\item[(b)] If $Z'$ is a set with the same weak combinatorics as $Z$, must
$Z'$ also be a nontrivial $(a,b)$-geproci set, or must $Z'$ even be projectively equivalent to $Z$? 
\item[(c)] If $Z$ and $Z'$ are geproci with the same combinatorics (or weak combinatorics),
must they be projectively equivalent?
\end{enumerate}
\end{question}

More generally, we can ask about gepro-$\mathcal P$ sets for interesting $\mathcal P$.

\begin{question}
Which finite point sets in $\PP^n$ project from a general point to an arithmetically Gorenstein set in a hyperplane? I.e.,
which sets are gepro-AG (arithmetically Gorenstein under a general projection)?
\end{question}

Initial examples of gepro-AG sets are given by any $n+1$ general points in $\PP^n$, but what others are there?


\section{AV sequences - general facts} \label{juan}

In Section \ref{uwe} we have defined what it means for a subscheme of $\PP^n$ to admit an unexpected hypersurface of specified degree and multiplicity at a general point. In effect, there is an expected dimension for a certain linear system but the actual dimension is strictly greater than this expected dimension. We now introduce a third measure, the virtual dimension.

\begin{definition}
Let $X \subset \mathbb P^n$ be a subscheme and let $t \geq m$ be positive integers. Let $P \in \mathbb P^n$ be a general point. The {\it virtual dimension} for $X$ in degree $j$ and multiplicity $m$ is
\[
\hbox{vdim}(X,j,m) = \hbox{vdim} (I_X,j,m) = \dim [I_X]_j - \binom{m+n-1}{n}.
\]
\end{definition}

\begin{remark}
    An equivalent condition for the existence of an unexpected hypersurface of degree $j$ and multiplicity $m$ is that $\hbox{adim} (X,j,m) > 0$ and $\hbox{adim}(X,j,m) > \hbox{vdim}(X,j,m)$.
\end{remark}

We have also defined the virtual dimension as being equivalent to the expected dimension, but differing only in that the virtual dimension is allowed to be negative.

\begin{question} \label{AV question}
Let $X \subset \PP^n$  be a closed subscheme.
\begin{enumerate}
    \item Is there any way to measure how the unexpectedness imposed by $X$ persists as the degree or the multiplicity changes?
    
    \item At the other extreme, are there conditions on $X$ that imply that $X$ does not admit unexpected hypersurfaces (either entirely or in specific ranges)?
\end{enumerate}

\end{question}

In this section we introduce the AV-sequences to give partial answers to these questions. The definition at first may seem a bit strange, but the wealth of results and conjectures that have already arisen  more than justifies this approach.

\begin{definition}[{\cite[Definition 2.8]{FGHM}}]
Let $X \subset \PP^n$ be a closed subscheme. Fixing a non-negative integer $j$, we define the sequence $AV_{X,j}$ as follows:
\[
AV_{X,j}(m) = \hbox{adim}(X,m+j,m) - \hbox{vdim}(X,m+j,m), m \geq 1.
\]
\end{definition}

\begin{remark}[{\cite[Remark 2.9]{FGHM}}]
Notice that if $\hbox{adim}(X,t,m) > 0$ then $X$ admits an unexpected hypersurface of degree $t$ and multiplicity $m$ if and only if $AV_{X,j}(m) > 0$ for $j = t-m$.
\end{remark}

We have the following result for cones, as our starting point.

\begin{proposition}[{\cite[Corollary 2.12]{HMNT}}]
Let $C$ be a reduced, equidimensional, non-degenerate curve of degree $d \geq 2$ in $\PP^3$ ($C$ may be reducible, singular, and/or disconnected). 
Let $P \in \PP^3$ be a general point. 
Let $t \geq d$ be a positive integer. Then $C$ admits an unexpected cone of degree $t$ with multiplicity $t$ at $P$. In particular, $AV_{C,0} (m) > 0$ for $m = t \geq d$. 
\end{proposition}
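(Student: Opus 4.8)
The plan is to establish, for each $m \ge d$, that $C$ admits an unexpected cone of degree $m$ with multiplicity $m$ at $P$; by the Remark above this is equivalent (once we check the actual dimension is positive) to $AV_{C,0}(m) = \adim(C,m,m) - \vdim(C,m,m) > 0$. By \Cref{cone in P3} there is a form $F \in [I_C \cap I_P^d]_d$ cutting out the cone over $C$, so for $m \ge d$ one has $F \cdot [I_P^{m-d}]_{m-d} \subseteq [I_{C+mP}]_m$, whence
\[
\adim(C,m,m) = \dim[I_{C+mP}]_m \ \ge\ \dim[I_P^{m-d}]_{m-d} = \binom{m-d+2}{2} > 0 .
\]
On the other hand $\vdim(C,m,m) = \dim[I_C]_m - \binom{m+2}{3} = \binom{m+2}{2} - h_C(m)$, where $h_C$ is the Hilbert function of the homogeneous coordinate ring of $C$. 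Thus it suffices to prove the numerical inequality
\[
h_C(m) \ >\ \binom{m+2}{2} - \binom{m-d+2}{2} \qquad (m \ge d),
\]
whose right-hand side simplifies to $dm + \tfrac12 d(3-d)$; indeed, if $h_C(m)$ already exceeds $\binom{m+2}{2}$ the unexpectedness is automatic ($\vdim<0$, $\adim>0$), and otherwise this inequality is precisely $\adim > \vdim \ge 0$.

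The heart of the argument is a lower bound for $h_C(m)$. Since $C$ is reduced and equidimensional of dimension one, $R/I_C$ has positive depth, so a general linear form $\ell$ is a nonzerodivisor on it; from the exact sequence $0 \to (R/I_C)(-1) \xrightarrow{\ \ell\ } R/I_C \to R/(I_C + (\ell)) \to 0$ we get $\Delta h_C(t) = \dim[R/(I_C + (\ell))]_t \ge h_\Gamma(t)$, where $\Gamma = C \cap H$ is a general hyperplane section and the inequality holds because passing from $R/(I_C + (\ell))$ to its saturation $R/I_\Gamma$ can only lower dimensions. Non-degeneracy of $C$ forces $[I_C]_1 = 0$, hence $\Delta h_C(1) = 3$; moreover $\Gamma$ is a set of $d$ distinct points in $H \cong \PP^2$, so $h_\Gamma(1) \ge 2$ and, being a reduced zero-dimensional scheme, its Hilbert function increases strictly until it reaches $d$, giving $h_\Gamma(t) \ge \min(t+1,d)$ for all $t \ge 1$. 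Summing $\Delta h_C(1) = 3$ and $\Delta h_C(t) \ge \min(t+1,d)$ for $2 \le t \le m$ yields $h_C(m) \ge dm + 1 + \tfrac12 d(3-d)$ for $m \ge d$, which beats the bound needed above by exactly $1$. (When $d \ge 3$ the section $\Gamma$ even spans $\PP^2$, so $h_\Gamma(1) = 3$ and one obtains the stronger $h_C(m) \ge dm + 1 - \binom{d-2}{2}$.)

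I expect the hyperplane-section step to be the main obstacle. The delicate point is that $C$ is allowed to be reducible, disconnected, and not arithmetically Cohen--Macaulay, so $R/(I_C + (\ell))$ need not agree with the saturated coordinate ring of $\Gamma$ in low degrees; one must argue carefully that $\Delta h_C$ nonetheless dominates $h_\Gamma$, the two essential inputs being the nonzerodivisor property (from positive depth, which uses only that $C$ is reduced of pure dimension one) and the strict growth of the Hilbert function of a reduced finite point set in $\PP^2$ up to its degree. The remaining pieces — the lower bound on $\adim$, the formula for $\vdim$, and the closing arithmetic — are routine.
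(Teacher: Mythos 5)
Your argument is correct and complete. The survey states this proposition without proof (citing \cite{HMNT}), but it does prove the closely related, stronger result \cite[Theorem 6.2]{FGHM} in Section 5, and that proof is the natural point of comparison: there the actual dimension is pinned down exactly as $\binom{t-d+2}{2}$ by a B\'ezout argument (any degree-$t$ cone with vertex $P$ containing $C$ must contain the cone over $C$ as a component), the Hilbert function is evaluated exactly as $h_C(t)=td-g+1$ for $t\geq d$ via the Gruson--Lazarsfeld--Peskine regularity theorem, and positivity of $AV_{C,0}(t)=\binom{d-1}{2}-g$ then reduces to the classical fact that $g\leq\binom{d-1}{2}$ with equality only for plane curves. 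You need only the inequality, and you obtain it by more elementary means: $\adim(C,m,m)\geq\binom{m-d+2}{2}$ by multiplying the degree-$d$ cone by degree-$(m-d)$ cones with vertex $P$, and $h_C(m)\geq dm+1+\tfrac{1}{2}d(3-d)$ from $\Delta h_C(t)\geq h_\Gamma(t)\geq\min(t+1,d)$ for a general hyperplane section $\Gamma$ together with $\Delta h_C(1)=3$ from non-degeneracy. The points you flag as delicate all go through: saturatedness of $I_C$ makes a general linear form a nonzerodivisor, Bertini in characteristic zero makes $\Gamma$ a reduced set of $d$ points, the containment $I_C+(\ell)\subseteq I_\Gamma$ gives $\Delta h_C\geq h_\Gamma$ degree by degree, and the strict growth of the Hilbert function of a reduced point set up to its degree is standard; none of this requires $C$ to be ACM, irreducible, or connected, and your closing arithmetic checks out (the bound exceeds what is needed by exactly $1$, matching the extremal case $AV_{C,0}=1$ of two skew lines). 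In short, your route trades the exact value of the $AV$-sequence, which the paper's method delivers, for a self-contained proof of positivity that avoids both GLP and the genus bound.
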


Still, it is not at all obvious that the AV-sequence (as a function of $m$) should have any nice properties. The first hint is given, though by the following two easy results.

\begin{lemma} [{\cite[Lemma 2.11]{FGHM}}]  \label{FGHM 2.11}
Let $P \in \PP^n$ be a general point and let $X \subset \PP^n$. Then
\[
AV_{X,j}(m) = \dim\left[  \faktor{R}{(I_X + I_P^m)}\right]_{m+j}  .
\] 
\end{lemma}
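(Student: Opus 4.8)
The plan is to unwind both definitions and reduce the claim to an elementary computation with graded pieces. By definition, $AV_{X,j}(m) = \adim(X, m+j, m) - \vdim(X, m+j, m)$, and the two terms on the right are $\dim [I_X \cap I_P^m]_{m+j}$ and $\dim [I_X]_{m+j} - \binom{m+n-1}{n}$ respectively. So I need to show
\[
\dim [I_X \cap I_P^m]_{m+j} - \dim [I_X]_{m+j} + \binom{m+n-1}{n} = \dim \left[ \faktor{R}{(I_X + I_P^m)} \right]_{m+j}.
\]

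First I would apply the standard short exact sequence of graded $R$-modules
\[
0 \to \faktor{R}{I_X \cap I_P^m} \to \faktor{R}{I_X} \oplus \faktor{R}{I_P^m} \to \faktor{R}{I_X + I_P^m} \to 0,
\]
which holds because $(I_X \cap I_P^m)$ is the kernel and $(I_X + I_P^m)$ the cokernel of the natural map $R/I_X \oplus R/I_P^m \to R/I_X + R/I_P^m$ built from the two quotient maps (the usual Mayer--Vietoris-type sequence for two ideals). Taking the degree $m+j$ piece and using additivity of $\dim_K(-)$ on short exact sequences of finite-dimensional vector spaces gives
\[
\dim [R/(I_X + I_P^m)]_{m+j} = \dim [R/I_X]_{m+j} + \dim [R/I_P^m]_{m+j} - \dim [R/(I_X \cap I_P^m)]_{m+j}.
\]
Next I would rewrite each $\dim [R/\mathfrak a]_{m+j}$ as $\binom{m+j+n}{n} - \dim [\mathfrak a]_{m+j}$; the $\binom{m+j+n}{n}$ terms from $R/I_X$ and $R/(I_X \cap I_P^m)$ cancel, leaving exactly $\dim[I_X \cap I_P^m]_{m+j} - \dim[I_X]_{m+j} + \dim[R/I_P^m]_{m+j}$ on the right-hand side.

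The last ingredient is the identity $\dim [R/I_P^m]_{m+j} = \binom{m+n-1}{n}$, valid for $j \ge 0$. This is the only place any genuine input is needed, and it is the computation underlying the definition of $\vdim$: since $P$ is a point, $I_P^m = I_{mP}$ is saturated, and $\dim[R/I_P^m]_t = \binom{m+n-1}{n}$ for all $t \ge m-1$, as recorded (in the more general setting of linear subspaces $Q$) in \Cref{lin space}; taking $Q = P$ a point ($\dim Q = 0$, $\codim Q = n$) there gives $\dim[R/I_P^m]_t = \sum_{i=0}^{m-1}\binom{n-1+i}{i} = \binom{m+n-1}{n}$ for $t \ge m-1$, in particular for $t = m+j$ with $j \ge 0$. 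Substituting this in finishes the proof.

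The only real obstacle is making sure the Mayer--Vietoris sequence is exact as graded modules and that one is entitled to the binomial identity for $\dim[R/I_P^m]_{m+j}$ in the relevant degree range; both are routine, so this lemma is genuinely ``easy'' as the text claims, and I expect the write-up to be short. One could alternatively phrase the whole argument directly on the $(m+j)$-graded pieces as an exact sequence of vector spaces $0 \to [I_X\cap I_P^m]_{m+j} \to [I_X]_{m+j}\oplus[I_P^m]_{m+j} \to [I_X + I_P^m]_{m+j} \to 0$ and compare with $0 \to [I_X+I_P^m]_{m+j} \to [R]_{m+j} \to [R/(I_X+I_P^m)]_{m+j}\to 0$, which avoids even mentioning $R/I_X$; this is perhaps the cleanest route.
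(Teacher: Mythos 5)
Your proof is correct and follows essentially the same route as the paper: the same Mayer--Vietoris short exact sequence for $R/(I_X\cap I_P^m)$, $R/I_X\oplus R/I_P^m$, and $R/(I_X+I_P^m)$, followed by counting dimensions in degree $t=m+j$ and using $\dim[R/I_P^m]_t=\binom{m+n-1}{n}$ for $t\ge m-1$. You simply spell out the ``calculation'' that the paper leaves implicit, including the hockey-stick evaluation via \Cref{lin space}, so nothing further is needed.
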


\begin{proof} 

Set $t:=m+j$. From the short exact sequence
\[
0\to R/(I_X \cap I_P^m) \to R/I_X \oplus R/ I_P^m \to R/(I_X + I_P^m)\to 0
\]
we get  in degree $t$
\[
\dim[R]_t- {\adim} (X,t,m) -\dim[R]_t+\dim[I_X]_t - \dim[R]_t+ \dim [I_P^m]_t +\dim[ R/(I_X + I_P^m)]_t= 0.
\]
Using the fact that $t \geq m$, we get after a calculation
	\[\dim[ R/(I_X + I_P^m)]_t = {\adim} (X,t,m) - {\vdim} (X,t,m) \] 
	as desired.
\end{proof}

\begin{lemma}[{\cite[Remark 2.13]{FGHM}}]  \label{FGHM 2.13}
Denote by $mP$ the zero-dimensional scheme defined by the saturated ideal $I_P^m$. 
If $X$ is ACM of dimension $\geq 1$ then
\[
AV_{X,j}(m) = h^1(\mathcal I_{X + mP} (m+j)).
\]
\end{lemma}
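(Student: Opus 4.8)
The plan is to combine the previous lemma with the standard cohomological interpretation of the zero-th and first cohomology of an ideal sheaf. By \Cref{FGHM 2.11} we already know that
\[
AV_{X,j}(m) = \dim \left[ \faktor{R}{(I_X + I_P^m)}\right]_{m+j},
\]
so the task reduces to identifying the right-hand side with $h^1(\mathcal I_{X+mP}(m+j))$ when $X$ is ACM of dimension at least one. First I would observe that $I_X + I_P^m$ need not be saturated, but its saturation is exactly $I_{X+mP} = I_X \cap I_P^m$ is not the point — rather, $I_{X \cup mP}$, the saturation of $I_X + I_P^m$, is the ideal of the scheme-theoretic intersection-support; the key homological input is that the degree-$t$ piece of $R/(I_X+I_P^m)$ measures the failure of $I_X + I_P^m$ to be saturated in degree $t$, which is precisely $h^1$ of the sheafification.

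More concretely, I would run the following steps. Sheafify the exact sequence
\[
0 \to R/(I_X \cap I_P^m) \to R/I_X \oplus R/I_P^m \to R/(I_X + I_P^m) \to 0
\]
used in the proof of \Cref{FGHM 2.11}; since $I_P^m$ is saturated, the sheaf associated to $R/I_P^m$ is $\mathcal O_{mP}$, and since $X$ is ACM of dimension $\ge 1$, the module $R/I_X$ agrees with $\bigoplus_t H^0(\mathcal O_X(t))$ in all degrees and has $H^1_{\mathfrak m}(R/I_X) = 0$, i.e. $h^1(\mathcal O_X(t)) $ contributes nothing in the relevant spot (more precisely, ACM of positive dimension gives $H^i_{\mathfrak m}(R/I_X)=0$ for $1 \le i \le \dim X$, in particular for $i=1$). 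Then compare the graded module $R/(I_X + I_P^m)$ with global sections of its sheaf $\mathcal O_{X \cap mP}$ (the structure sheaf of the scheme cut out by $I_X + I_P^m$): the cokernel of $[R/(I_X+I_P^m)]_t \to H^0(\mathcal O_{X\cap mP}(t))$, together with the vanishing $h^1(\mathcal O_{mP}(t)) = 0$ for $t \gg 0$ (in fact for $t \ge m$ by \Cref{lin space}-type reasoning) and $h^1(\mathcal O_X(t)) = 0$, feeds into the long exact cohomology sequence of
\[
0 \to \mathcal I_{X + mP}(t) \to \mathcal O_{\PP^n}(t) \to \mathcal O_{X} (t) \oplus \mathcal O_{mP}(t) \to \mathcal O_{X \cap mP}(t) \to 0
\]
(split as two short exact sequences) to yield $h^1(\mathcal I_{X+mP}(t)) = \dim [R/(I_X+I_P^m)]_t$ for $t = m+j \ge m$.

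The step I expect to be the main obstacle is the bookkeeping that shows the only surviving term in the long exact sequence is the $h^1$ of the ideal sheaf — i.e. verifying simultaneously that (a) $h^1(\mathcal O_X(t)) = 0$ in the range $t \ge m$, which is where the ACM hypothesis on $X$ enters, (b) $h^1(\mathcal O_{mP}(t)) = 0$ for $t \ge m$, which follows from \Cref{lin space} applied to the point $Q = P$ (so $I_Q = I_P$), and (c) the map on global sections $H^0(\mathcal O_{\PP^n}(t)) \to H^0(\mathcal O_X(t)) \oplus H^0(\mathcal O_{mP}(t))$ has image of the expected codimension, so that the discrepancy $\adim - \vdim$ is accounted for entirely by $h^1(\mathcal I_{X+mP})$ rather than by a failure at the $h^0$ level. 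Once these vanishings are in hand, a diagram chase on the two short exact sequences obtained by splitting the displayed four-term sequence at $\mathcal O_X(t) \oplus \mathcal O_{mP}(t)$ — equivalently, the Mayer–Vietoris-type sequence for the union $X \cup mP$ — gives the identification directly, and one checks the degree shift $t = m+j$ matches the definition of $AV_{X,j}(m)$.
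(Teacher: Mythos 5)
Your argument is correct in substance, but it is worth comparing it with the route the paper takes, which is shorter: the paper simply applies sheaf cohomology to the residual exact sequence $0 \to \mathcal I_{X+mP} \to \mathcal I_X \to \mathcal O_{mP} \to 0$; since $X$ is ACM of dimension $\geq 1$ one has $H^1(\mathcal I_X(t))=0$ for all $t$, so $h^1(\mathcal I_{X+mP}(t)) = h^0(\mathcal O_{mP}(t)) - \dim[I_X]_t + \adim(X,t,m) = \adim(X,t,m)-\vdim(X,t,m)$ once one notes $h^0(\mathcal O_{mP}(t)) = \binom{m+n-1}{n}$ for $t \geq m-1$. Your Mayer--Vietoris route through \Cref{FGHM 2.11} reaches the same place and uses exactly the same two inputs (the ACM vanishing and the length of the fat point), so it buys a cleaner conceptual link to the previous lemma at the cost of an extra exact sequence. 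Two points in your write-up should be repaired. First, the sentence about the saturation of $I_X+I_P^m$ is garbled: since $P$ is general, $X$ and $mP$ are \emph{disjoint}, so the saturation of $I_X+I_P^m$ is the unit ideal and $\mathcal O_{X\cap mP}=0$; this disjointness is what collapses your four-term sequence to the short exact sequence $0 \to \mathcal I_{X+mP}(t) \to \mathcal O_{\PP^n}(t) \to \mathcal O_X(t)\oplus\mathcal O_{mP}(t) \to 0$, and you should say so explicitly rather than carry $\mathcal O_{X\cap mP}$ through the diagram chase. Second, your listed obstacle (a), the vanishing $h^1(\mathcal O_X(t))=0$ for $t \geq m$, is neither needed nor true in general: $h^1(\mathcal O_X(t))$ corresponds to $H^2_{\mathfrak m}(R/I_X)_t$ (for a space curve it is controlled by the genus and does not vanish in low degrees), and it never enters the long exact sequence above because the cokernel of $H^0(\mathcal O_{\PP^n}(t)) \to H^0(\mathcal O_X(t))\oplus H^0(\mathcal O_{mP}(t))$ is already $H^1(\mathcal I_{X+mP}(t))$ by $H^1(\mathcal O_{\PP^n}(t))=0$. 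The fact you actually use --- and do state correctly elsewhere --- is $H^1_{\mathfrak m}(R/I_X)=0$, i.e.\ $[R/I_X]_t \cong H^0(\mathcal O_X(t))$ for all $t$, which is precisely where the hypothesis that $X$ is ACM of dimension at least one (and not merely zero-dimensional) is required.
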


\begin{proof}
This follows from the exact sequence of sheaves
\[
0 \rightarrow \mathcal I_{X + mP} \rightarrow \mathcal I_X \rightarrow \mathcal O_{mP} \rightarrow 0.
\]
We omit the details.
\end{proof}

Of course these last two lemmas show that the integers $AV_{X,j}(m)$ have significant algebraic and cohomological meaning, but they fail to give insight into the {\it sequence} $AV_{X,j}(m)$ as a sequence indexed by $m$. Indeed, Lemma \ref{FGHM 2.11} suggests that the sequence represents the Hilbert function in different degrees of coordinate algebras of varying subschemes, and Lemma \ref{FGHM 2.13} suggests that the sequence represents the dimension of the first cohomology of the ideal sheaf of varying schemes. What is missing is some uniformity to understand the sequence, and this is provided in a surprising way by considering generic initial ideals. Our description comes from \cite{FGHM}.

Let $X \subset \PP^n$ be a closed subscheme with saturated ideal $I_X$. Let $R = K[x_0,x_1,\dots,x_n]$ be a standard graded polynomial ring. Assume that the monomials of $R$ are ordered by $>_{lex}$, the lexicographic monomial order which satisfies $x_0 > x_1 > \dots > x_n$. A set $M \subset R$ of monomials is a {\it lex segment} if the monomials have the same degree and they satisfy the condition that whenever $u,v$ are monomials with $u \geq v$ and $v \in M$ then also $u \in M$. A homogeneous ideal $I \subset R$ is called a {\it lex-segment ideal} if, for each $d$, the homogeneous component $[I]_d$ of $I$ in degree $d$ is generated by a lex segment. 

For a graded ideal $I \subset R$ we denote by $\hbox{gin}(I)$ the generic initial ideal of $I$ with respect to the monomial order $>_{lex}$. A good reference for generic initial ideals is \cite{Green}. 

We collect without proof a technical result from \cite{FGHM}.

\begin{lemma}[{\cite[Lemma 3.1]{FGHM}}] 
Let $X \subseteq \PP^n$ be a closed subscheme. For any non-negative integers $t$ and $m$ we have

\begin{itemize}
    \item[(i)] $\hbox{\rm adim} (X,t,m) = \dim [\hbox{\rm gin} (I_X) \cap I_Q^m ]_t$ where $Q = [1,0,\dots,0] \in \PP^n$.
    
    \item[(ii)] $\hbox{\rm vdim} (X,t,m) = \hbox{\rm vdim} (\hbox{\rm gin} (I_X) , t,m)$. 
\end{itemize}
    
\end{lemma}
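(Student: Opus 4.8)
The plan is to reduce both statements to the well-known fact that passing to the generic initial ideal preserves Hilbert functions, and then to handle the extra data coming from the fat point $mP$ by exploiting that a generic point can be moved to the coordinate point $Q=[1,0,\dots,0]$ after a general change of coordinates, while a general change of coordinates is precisely what computes the generic initial ideal. Throughout, $R=K[x_0,\dots,x_n]$.

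For part (ii), I would argue as follows. By definition $\vdim(X,t,m)=\dim[I_X]_t-\binom{m+n-1}{n}$, and $\dim[I_X]_t$ depends only on the Hilbert function of $R/I_X$. A standard property of generic initial ideals (with respect to any term order), see \cite{Green}, is that $R/\gin(I_X)$ and $R/I_X$ have the same Hilbert function; equivalently $\dim[\gin(I_X)]_t=\dim[I_X]_t$ for all $t$. Hence $\vdim(\gin(I_X),t,m)=\dim[\gin(I_X)]_t-\binom{m+n-1}{n}=\dim[I_X]_t-\binom{m+n-1}{n}=\vdim(X,t,m)$. This step is essentially immediate.

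For part (i), the key point is to track what happens to the linear system $[I_X\cap I_P^m]_t$ under a general linear change of coordinates $g\in GL_{n+1}$. On the one hand, $\adim(X,t,m)=\dim[I_X\cap I_P^m]_t$ is unchanged if we simultaneously replace $I_X$ by $g\cdot I_X$ and $P$ by $g\cdot P$, since $g$ acts as a graded $K$-algebra automorphism and $I_{g\cdot P}=g\cdot I_P$; thus $\adim(X,t,m)=\dim[(g\cdot I_X)\cap I_{g\cdot P}^m]_t$. On the other hand, because $P$ is a \emph{general} point, for a general $g$ the point $g\cdot P$ is again general, and independently we may further compose with a $g$ that sends (the still-general) point to the fixed coordinate point $Q=[1,0,\dots,0]$; the upshot is that for a general $g$ we have both $\mathrm{in}(g\cdot I_X)=\gin(I_X)$ (this is the defining property of the generic initial ideal: it is the initial ideal $\mathrm{in}_{>_{lex}}(g\cdot I_X)$ for $g$ in a dense open set of $GL_{n+1}$) and $g\cdot P = Q$ simultaneously, since both conditions hold on dense open subsets of the group. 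For such a $g$ one then has $\adim(X,t,m)=\dim[(g\cdot I_X)\cap I_Q^m]_t$. The final step is to pass from $g\cdot I_X$ to its initial ideal: one shows $\dim[(g\cdot I_X)\cap I_Q^m]_t=\dim[\mathrm{in}_{>_{lex}}(g\cdot I_X)\cap I_Q^m]_t=\dim[\gin(I_X)\cap I_Q^m]_t$. This last equality uses that $I_Q=(x_1,\dots,x_n)$ is itself a monomial ideal (fixed, not moved), so the intersection $(g\cdot I_X)\cap I_Q^m$ is a graded piece whose dimension can be computed from a Gröbner basis: taking initial terms of a Gröbner basis of $I_X$ adapted to the filtration by powers of $I_Q$ is compatible with intersecting by the monomial ideal $I_Q^m$, so the two intersections have the same Hilbert function degree by degree.

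The main obstacle is exactly this last compatibility: it is not true in general that $\mathrm{in}(A\cap B)=\mathrm{in}(A)\cap\mathrm{in}(B)$ for arbitrary ideals, so I cannot simply say "initial ideals commute with intersections." The correct formulation is that $\dim[(g\cdot I_X)\cap I_Q^m]_t$ equals $\dim[\gin(I_X)\cap I_Q^m]_t$, and to see this I would use the short exact sequence $0\to R/(I\cap I_Q^m)\to R/I\oplus R/I_Q^m\to R/(I+I_Q^m)\to 0$ (as in the proof of Lemma~\ref{FGHM 2.11}) to rewrite $\dim[(g\cdot I_X)\cap I_Q^m]_t$ in terms of the Hilbert functions of $R/(g\cdot I_X)$, of $R/I_Q^m$, and of $R/(g\cdot I_X+I_Q^m)$; the first two are insensitive to passing to initial ideals, and the real content is that $\dim[R/(g\cdot I_X + I_Q^m)]_t=\dim[R/(\gin(I_X)+I_Q^m)]_t$ for general $g$, which is a semicontinuity statement: the Hilbert function of $R/(g\cdot I_X + I_Q^m)$ achieves its generic (minimal) value on a dense open set of $g$, and this generic value is computed by the initial ideal $\gin(I_X)$. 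Granting this — which is where one genuinely invokes the theory of generic initial ideals from \cite{Green} rather than just Hilbert-function invariance — both identities (i) and (ii) follow, and in fact \cite[Lemma 3.1]{FGHM} simply records this consequence.
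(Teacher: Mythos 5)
The survey itself records this lemma without proof (it is quoted from \cite{FGHM}), so I am judging your argument on its own terms. Part (ii) is correct and complete: $\mathrm{gin}$ preserves Hilbert functions, and $\vdim$ depends only on $\dim[I_X]_t$. Your reduction of part (i) is also essentially the right strategy, though one justification is stated incorrectly: the locus $\{g : g\cdot P = Q\}$ is a coset of the stabilizer of $Q$, hence closed and nowhere dense, not ``a dense open subset of the group.'' The repair is standard: for $g$ in a dense open subset of $GL_{n+1}$ the point $g^{-1}\cdot Q$ is general, so $\dim[(g\cdot I_X)\cap I_Q^m]_t=\dim[I_X\cap I_{g^{-1}\cdot Q}^m]_t=\adim(X,t,m)$, and one may intersect with the dense open set on which $\mathrm{in}_{>_{lex}}(g\cdot I_X)=\mathrm{gin}(I_X)$.

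The genuine gap is in your final step, the equality $\dim[(g\cdot I_X)\cap I_Q^m]_t=\dim[\mathrm{gin}(I_X)\cap I_Q^m]_t$. You correctly identify this as the main obstacle, but the semicontinuity argument you offer only yields one inequality. For any ideal $I$ and any monomial ideal $M$ one has $\mathrm{in}(I)+M\subseteq\mathrm{in}(I+M)$, hence $\dim[I\cap M]_t\le\dim[\mathrm{in}(I)\cap M]_t$; likewise $\mathrm{gin}(I_X)$ is a flat degeneration of the orbit $g\cdot I_X$, so semicontinuity again only says the special fiber's intersection with $I_Q^m$ is at least as large as the generic one. Nothing you invoke rules out a strict inequality, and a priori the generic value of $\dim[R/(g\cdot I_X+I_Q^m)]_t$ could be strictly smaller than $\dim[R/(\mathrm{gin}(I_X)+I_Q^m)]_t$. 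The missing ingredient is precisely where the choices of $>_{lex}$ with $x_0$ largest and $Q=[1,0,\dots,0]$ enter: $[I_Q^m]_t$ is spanned by the degree-$t$ monomials with $x_0$-exponent at most $t-m$, and since $u>_{lex}v$ in a fixed degree forces $\deg_{x_0}u\ge\deg_{x_0}v$, this monomial basis of $[I_Q^m]_t$ is a lex-final (downward-closed) segment. Taking a basis of $[g\cdot I_X]_t$ in row-echelon form with distinct leading monomials, an element lies in $[I_Q^m]_t$ exactly when its leading monomial does, so $\dim\bigl([g\cdot I_X]_t\cap[I_Q^m]_t\bigr)$ equals the number of leading monomials lying in $[I_Q^m]_t$, which is $\dim[\mathrm{in}_{>_{lex}}(g\cdot I_X)\cap I_Q^m]_t$ --- an exact equality valid for every $g$, with no semicontinuity needed. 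That elementary but essential linear-algebra step is what your proposal lacks; with it, both halves of the lemma follow as you outline.
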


Using this result and a fair amount of computation, the following surprising result was then shown.

\begin{theorem}[{\cite[Theorem 3.4]{FGHM}}]  \label{shifted O-seq}
For any non-negative integer $j$, the sequence $AV_{X,j}$ shifted to the left by 1 is an $O$-sequence. In particular, setting $J = \hbox{\rm gin}(I_X) : x_0^{j+1}$, the sequence $AV_{X,j}$ shifted to the left by 1 coincides with the Hilbert function of $R/J$, i.e.
\[
AV_{X,j} (s+1) = h_{R/J}(s), \ \ s \geq 0.
\]
\end{theorem}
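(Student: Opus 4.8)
The plan is to use the generic initial ideal translation of \cite[Lemma 3.1]{FGHM} to reduce the computation of $AV_{X,j}(m)$ to a count of monomials, and then to recognize that count as the Hilbert function of the quotient $R/J$.

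First I would observe that the short exact sequence argument proving \cite[Lemma 2.11]{FGHM} uses nothing about $I_X$ beyond its being a homogeneous ideal, so it applies verbatim with $I_X$ replaced by the monomial ideal $G := \operatorname{gin}(I_X)$ and the general point $P$ replaced by the coordinate point $Q = [1:0:\cdots:0]$; since $t := m+j \ge m$ (as $j \ge 0$), this yields $\dim[R/(G + I_Q^m)]_{m+j} = \dim[G \cap I_Q^m]_{m+j} - \dim[G]_{m+j} + \binom{m+n-1}{n}$, whose right hand side is exactly $\adim(X,m+j,m) - \vdim(X,m+j,m) = AV_{X,j}(m)$ by \cite[Lemma 3.1]{FGHM}. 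Since $I_Q = (x_1,\dots,x_n)$, the ideal $G + I_Q^m$ is monomial, so $AV_{X,j}(m)$ equals the number of monomials of degree $m+j$ lying in neither $G$ nor $(x_1,\dots,x_n)^m$.

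The combinatorial heart of the matter is then the following. A monomial $u$ of degree $m+j$ lies in $(x_1,\dots,x_n)^m$ precisely when $\deg_{x_0} u \le j$, hence $u \notin (x_1,\dots,x_n)^m$ exactly when $x_0^{j+1} \mid u$, in which case $u = x_0^{j+1} w$ for a unique monomial $w$ of degree $m-1$. Because $G$ is a monomial ideal, $x_0^{j+1} w \notin G$ if and only if $w \notin (G : x_0^{j+1}) = J$. Thus $u \mapsto w$ is a bijection from the monomial basis of $[R/(G + I_Q^m)]_{m+j}$ onto the set of degree $(m-1)$ monomials not in $J$, so
\[
AV_{X,j}(m) = \dim[R/J]_{m-1} = h_{R/J}(m-1), \qquad m \ge 1,
\]
which, upon setting $s = m-1$, is precisely the asserted identity $AV_{X,j}(s+1) = h_{R/J}(s)$. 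Finally, $J$ is a homogeneous ideal of the standard graded polynomial ring $R$, so $R/J$ is a standard graded $K$-algebra, and Macaulay's characterization of Hilbert functions of standard graded algebras shows that $h_{R/J}$ is an $O$-sequence; hence $AV_{X,j}$ shifted one step to the left is an $O$-sequence.

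I do not expect a genuine obstacle once \cite[Lemma 3.1]{FGHM} and \cite[Lemma 2.11]{FGHM} are in hand: the one conceptual step is realizing that passing to $\operatorname{gin}(I_X)$ converts the $AV$-sequence into a monomial count, after which the argument is elementary bookkeeping with (strongly stable) monomial ideals. The only points that call for a little care are the equivalence $x_0^{j+1} w \notin G \Leftrightarrow w \notin G : x_0^{j+1}$ (immediate for monomial ideals) and the harmless degenerate case $x_0^{j+1} \in G$ (equivalently $[I_X]_{j+1} \neq 0$), in which $J = R$ and both $AV_{X,j}$ and $h_{R/J}$ vanish identically.
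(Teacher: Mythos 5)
Your proof is correct and complete. The survey omits the proof of this theorem (it cites \cite[Theorem 3.4]{FGHM} and notes only that it follows from \cite[Lemma 3.1]{FGHM} ``and a fair amount of computation''), and your argument supplies exactly that computation along the intended route: pass to the generic initial ideal via the cited lemma, reduce $AV_{X,j}(m)$ to a count of degree $m+j$ monomials lying outside $\operatorname{gin}(I_X)+(x_1,\dots,x_n)^m$, and identify that count with $h_{R/J}(m-1)$ by dividing out $x_0^{j+1}$, so that Macaulay's theorem gives the $O$-sequence claim.
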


\begin{remark} 
In view of Lemma \ref{FGHM 2.11} and Lemma \ref{FGHM 2.13}, Theorem \ref{shifted O-seq} is striking because it covers the whole $AV$-sequence without the need of changing an algebra or a subscheme as we move degree by degree. It is perhaps even more striking because nothing in the definition of the $AV$-sequence suggests that it should be a Hilbert function of anything, or that it should abide by the Macaulay bounds. It is a strong restriction on the possible $AV$-sequences. We will see in this section and the next that even more elegant restrictions on $AV$-sequences arise under some additional geometric assumptions.
\end{remark}

Theorem \ref{shifted O-seq} has the following very surprising consequence about the absence of unexpectedness, giving a partial answer to the second question posed at the beginning of this section. Its proof makes use of the generic initial ideal.

\begin{corollary}[{\cite[Corollary 3.5]{FGHM}}]  \label{cor: a=v}
If $X$ is a reduced subscheme of $\PP^n$ contained in a hypersurface of degree $d+1 \geq 1$ then for any $t \geq d+m$ and $m \geq 1$ we have
\[
\hbox{adim} (X,t,m) = \hbox{vdim}(X,t,m).
\]
In particular, if $X$ is degenerate ($d=0$) then $X$ admits no unexpected hypersurfaces of multiplicity $m$ in any degree $t \geq m$, and hence no unexpected hypersurfaces of any kind.
\end{corollary}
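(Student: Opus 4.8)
The plan is to convert the desired equality into the vanishing of a single value of an $AV$-sequence and then exploit \Cref{shifted O-seq}. By the definition of the $AV$-sequence, with $j = t-m$ one has $\adim(X,t,m) - \vdim(X,t,m) = AV_{X,j}(m)$, so it suffices to show $AV_{X,j}(m) = 0$. The hypothesis $t \ge d+m$ is exactly $j \ge d$, so $j$ is a non-negative integer and \Cref{shifted O-seq} applies: $AV_{X,j}(m) = h_{R/J}(m-1)$ for all $m \ge 1$, where $J = \operatorname{gin}(I_X) : x_0^{j+1}$. Thus everything reduces to proving $J = R$, i.e.\ $x_0^{j+1} \in \operatorname{gin}(I_X)$.

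Here is where the reducedness hypothesis enters. Since $X$ is reduced, its saturated ideal $I_X$ is its own radical, so the assumption that $X$ lies on a hypersurface of degree $d+1$ means precisely that $[I_X]_{d+1} \ne 0$, hence $[\operatorname{gin}(I_X)]_{d+1} \ne 0$. Because $\operatorname{gin}(I_X)$ is strongly stable (Borel-fixed), a nonzero degree-$(d+1)$ component must contain the lexicographically largest monomial of that degree, namely $x_0^{d+1}$; and since $j+1 \ge d+1$ this forces $x_0^{j+1} = x_0^{d+1}\cdot x_0^{\,j-d} \in \operatorname{gin}(I_X)$. Therefore $1 \in J$, so $J = R$, $R/J = 0$, and $AV_{X,j}(m) = h_{R/J}(m-1) = 0$ for every $m \ge 1$; this gives $\adim(X,t,m) = \vdim(X,t,m)$.

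For the final assertion, set $d = 0$: a degenerate reduced $X$ lies on a hyperplane, so the case just proved gives $\adim(X,t,m) = \vdim(X,t,m)$ for all $t \ge m$ and $m \ge 1$, while for $t < m$ one has $[I_{X+mP}]_t \subseteq [I_P^m]_t = 0$, so the strict inequality defining an unexpected hypersurface never holds. Hence $X$ has no unexpected hypersurfaces at all. The one point needing care is verifying that ``$X$ lies on a hypersurface of degree $d+1$'' really supplies a degree-$(d+1)$ form in the \emph{saturated} ideal $I_X$ — this is exactly what reducedness provides, and a non-reduced fat point scheme shows it can fail otherwise — after which \Cref{shifted O-seq}, applied with the correct index shift, makes the vanishing of $AV_{X,j}(m)$ for all $m$ immediate. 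In this sense all of the real work has already been carried out in \Cref{shifted O-seq}.
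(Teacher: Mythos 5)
Your proof is correct and follows essentially the same route as the paper's: reduce to showing $AV_{X,j}(m)=0$ via Theorem \ref{shifted O-seq}, note that $[I_X]_{d+1}\neq 0$ forces $x_0^{d+1}\in\operatorname{gin}(I_X)$, hence $x_0^{j+1}\in\operatorname{gin}(I_X)$ and $J=R$. Your added justification via the Borel-fixed property of the gin, and the explicit remark on where reducedness is used, only make explicit what the paper leaves implicit.
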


\begin{proof}
Since $I_X$ lies on a hypersurface of degree $d+1$, the initial degree of its homogeneous ideal is $\leq d+1$. In particular, we have $x_0^{d+1} \in \hbox{gin}(I_X)$. But $t-m+1 \geq d+1$ so  $1 \in (\hbox{gin}(I_X) : x_0^{t-m+1})$ for $t \geq d+m$ and $m \geq 0$. 

Now set $t = m+j$ (preparing to use the definition of $AV$-sequences) with $j \geq d$ and $m \geq 1$. This means $j+1 = t-m+1$, and so $1 \in J$, where $J$ is the ideal in Theorem \ref{shifted O-seq}. Thus for $m-1 \geq 0$ we have
\[
0 = h_{R/J}(m-1) = AV_{X,j}(m) = \hbox{adim}(X,t,m) - \hbox{vdim}(X,t,m).
\]
The condition $d=0$ means that for all $t \geq m \geq 1$ the actual dimension and the virtual dimension coincide, hence there is no unexpected hypersurface.
\end{proof}

Here are two more surprising results that force the absence of unexpected hypersurfaces and so address Question \ref{AV question}(2).

\begin{proposition}[{\cite[Proposition 3.8]{FGHM}}]
Let $X \subset \PP^n$ be a closed subscheme. Let $\alpha = \alpha(I_X)$ be the least degree $t$ such that $[I_X]_t \neq 0$. If $AV_{X,0}(\alpha) = 0$ then $X$ does not admit unexpected hypersurfaces, for any degree and multiplicity at a general point.
\end{proposition}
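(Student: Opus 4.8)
The plan is to translate the hypothesis $AV_{X,0}(\alpha) = 0$ into a statement about $\operatorname{gin}(I_X)$ via \Cref{shifted O-seq}, and then propagate vanishing through the whole $AV$-sequence (for every $j$) using the Macaulay/$O$-sequence restriction. First I would recall that, by definition, $AV_{X,0}(\alpha)$ measures unexpectedness in degree $\alpha + \alpha = 2\alpha$ with multiplicity $\alpha$; but more usefully, \Cref{shifted O-seq} says $AV_{X,0}(s+1) = h_{R/J_0}(s)$ where $J_0 = \operatorname{gin}(I_X) : x_0$. The key observation is that $\alpha = \alpha(I_X)$ is also the initial degree of $\operatorname{gin}(I_X)$, and by the structure of generic initial ideals with respect to the lex order, $x_0^\alpha \in [\operatorname{gin}(I_X)]_\alpha$ — indeed $x_0^\alpha$ is the lex-largest monomial of that degree and $[\operatorname{gin}(I_X)]_\alpha$ is a nonzero Borel-fixed (in fact here one checks: nonzero strongly stable) space, hence contains $x_0^\alpha$.

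Next I would analyze what $AV_{X,0}(\alpha) = 0$ forces. Writing $AV_{X,0}(\alpha) = h_{R/J_0}(\alpha - 1)$ with $J_0 = \operatorname{gin}(I_X) : x_0$, vanishing in degree $\alpha - 1$ means $[J_0]_{\alpha-1} = [R]_{\alpha-1}$, i.e. $x_0^{\alpha - 1} \cdot \ell \in \operatorname{gin}(I_X)$ for every monomial $\ell$ of degree $\alpha - 1$; equivalently $1 \in (J_0)_{\alpha - 1}$ in the sense that $[R/J_0]_{\alpha-1}=0$. Since $R/J_0$ has a Hilbert function that is an $O$-sequence (Macaulay), once it vanishes in degree $\alpha - 1$ it vanishes in all higher degrees, so $h_{R/J_0}(s) = 0$ for all $s \geq \alpha - 1$. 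Combined with $h_{R/J_0}(s) = 0$ for $s < \alpha - 1$ (which holds because $AV_{X,0}(s+1) = \operatorname{adim}(X, s+1+\cdot,\cdot) - \operatorname{vdim} \geq 0$ and in the relevant low degrees the actual and virtual dimensions agree — this needs a short check using that $\operatorname{gin}(I_X)$ has nothing below degree $\alpha$, so $x_0^{s+1}$ cannot land in it unless $s+1 \geq \alpha$), we get $h_{R/J_0} \equiv 0$, hence $x_0^{s+1} \in \operatorname{gin}(I_X)$ already for $s+1 = \alpha$, i.e. $x_0^\alpha \in \operatorname{gin}(I_X)$ — which we already knew — but the real content is: the hypothesis forces that \emph{dividing out one power of $x_0$ already makes the quotient zero in degree $\alpha-1$ and above}.

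The crucial step is then to deduce that for \emph{every} $j \geq 0$ we have $AV_{X,j} \equiv 0$. Set $J_j = \operatorname{gin}(I_X) : x_0^{j+1}$; by \Cref{shifted O-seq}, $AV_{X,j}(s+1) = h_{R/J_j}(s)$. Since $J_0 \subseteq J_j$ (as $x_0^{j+1} = x_0 \cdot x_0^j$, so anything landing in $\operatorname{gin}(I_X)$ after multiplying by $x_0$ certainly does after multiplying by the larger power), we get a surjection $R/J_0 \twoheadrightarrow R/J_j$, hence $h_{R/J_j}(s) \leq h_{R/J_0}(s) = 0$ for $s \geq \alpha - 1$. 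For $s < \alpha - 1$ one argues as above that $\operatorname{gin}(I_X)$ has no generators below degree $\alpha$, so no monomial of degree $< \alpha$ is a multiple of $x_0^{j+1}$ times something in $\operatorname{gin}(I_X)$ unless... — here one must be slightly careful, but the point is $h_{R/J_j}(s)$ for small $s$ equals the virtual-vs-actual gap, which is a nonnegative quantity that the hypothesis and monotonicity pin to zero. Therefore $AV_{X,j}(m) = 0$ for all $j \geq 0$ and all $m \geq 1$, which by the Remark following the definition of $AV_{X,j}$ (together with the observation that $\operatorname{adim} = \operatorname{vdim}$ whenever $\operatorname{adim} \leq 0$) means $X$ admits no unexpected hypersurface of any degree or multiplicity.

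\textbf{Main obstacle.} The delicate point is handling the low-degree terms $h_{R/J_j}(s)$ for $s < \alpha - 1$: a priori the $AV$-sequence could be positive there without contradicting $AV_{X,0}(\alpha) = 0$, since the $O$-sequence property only forces \emph{downward} propagation of zeros, not upward. The resolution is that in this range $\operatorname{adim}(X,\cdot,\cdot)$ and $\operatorname{vdim}(X,\cdot,\cdot)$ agree for elementary reasons — essentially because $[I_X]_t$ is too small (or $\operatorname{vdim} \leq 0$) for unexpectedness to be possible when the degree is not large enough relative to $\alpha$ — so the $AV$-sequence is already zero there. Pinning this down cleanly (for all $j$ simultaneously, comparing the containments $J_0 \subseteq J_j$ and the shape of $\operatorname{gin}(I_X)$ in its initial degree) is where the real work lies; everything else is bookkeeping with \Cref{shifted O-seq} and Macaulay's theorem.
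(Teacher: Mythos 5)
Your first half is on target: via \Cref{shifted O-seq} one has $AV_{X,0}(\alpha)=h_{R/J_0}(\alpha-1)$ with $J_0=\hbox{gin}(I_X):x_0$, Macaulay's bound then kills $h_{R/J_0}$ in every degree $\ge\alpha-1$, and the containment $J_0\subseteq J_j$ gives $AV_{X,j}(m)=0$ for all $j\ge 0$ and all $m\ge\alpha$. But the rest has a genuine gap, and one of its supporting claims is false. The assertion that $h_{R/J_0}(s)=0$ for $s<\alpha-1$ (hence $h_{R/J_0}\equiv 0$) cannot be right: $\hbox{gin}(I_X)$ is zero in degrees $<\alpha$, so a monomial $u$ of degree $s<\alpha-1$ has $x_0u\notin\hbox{gin}(I_X)$, whence $[J_0]_s=0$ and $h_{R/J_0}(s)=\binom{n+s}{n}>0$. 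This does no harm to the Proposition, because for $t<\alpha$ one has $\adim(X,t,m)\le\dim[I_X]_t=0$ and unexpectedness is excluded by the requirement $\adim>0$, not by $AV=0$; but it means your ``$h_{R/J_0}\equiv 0$'' and your ``$\adim=\vdim$ in low degrees'' claims are not the correct mechanism.

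The real gap is the range $1\le m<\alpha$ with $t=m+j\ge\alpha$. There $\adim(X,t,m)$ can be positive, so you genuinely need $AV_{X,j}(m)=h_{R/J_j}(m-1)=0$ with $m-1<\alpha-1$; your comparison $h_{R/J_j}(s)\le h_{R/J_0}(s)$ evaluates both sides at the \emph{same} $s$ and therefore never reaches this range, and the claim that $\adim=\vdim$ there ``for elementary reasons'' is not true --- it is a consequence of the hypothesis, not of the degree being small. Two ways to close the gap: (1) note that $AV_{X,0}(\alpha)=0$ says exactly that $x_0u\in\hbox{gin}(I_X)$ for every monomial $u$ of degree $\alpha-1$, hence every monomial of degree $\ge\alpha$ divisible by $x_0$ lies in $\hbox{gin}(I_X)$; since $x_0^{j+1}v$ (for $\deg v=m-1$) has degree $t\ge\alpha$ and is divisible by $x_0$, it lies in $\hbox{gin}(I_X)$, so $h_{R/J_j}(m-1)=0$; or (2) observe that $J_{j+1}=J_j:x_0$, so multiplication by $x_0$ injects $[R/J_{j+1}]_s$ into $[R/J_j]_{s+1}$, giving $h_{R/J_{j+1}}(s)\le h_{R/J_j}(s+1)$ and, by iteration, $AV_{X,j}(m)=h_{R/J_j}(m-1)\le h_{R/J_0}(m-1+j)=AV_{X,0}(t)=0$ whenever $t\ge\alpha$. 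Either patch, combined with your Macaulay step and the trivial case $t<\alpha$, completes the proof. (The paper omits its own proof of this proposition, so the comparison here is with what a complete argument requires rather than with a printed one.)
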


\begin{proposition}[{\cite[Proposition 3.9]{FGHM}}]
Let $X \subset \PP^n$ be a closed subscheme. 
If $\hbox{gin}(I_X)$ is a lex-segment ideal then $X$ does not admit any unexpected hypersurfaces.
\end{proposition}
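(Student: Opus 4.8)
The plan is to reduce the statement to an application of Theorem~\ref{shifted O-seq} by exploiting the combinatorial rigidity of lex-segment ideals. Recall that Theorem~\ref{shifted O-seq} says that for each $j \ge 0$ the sequence $AV_{X,j}$ (shifted left by one) is the Hilbert function of $R/J$, where $J = \hbox{gin}(I_X) : x_0^{j+1}$. By the remark following the definition of the $AV$-sequences (via Lemma~\ref{FGHM 2.11} and the definition of unexpectedness), $X$ admits an unexpected hypersurface of some degree $t$ and multiplicity $m$ if and only if $AV_{X,t-m}(m) > 0$ with $\hbox{adim}(X,t,m)>0$. So it suffices to show that whenever $\hbox{gin}(I_X)$ is a lex-segment ideal, one has $AV_{X,j}(m) = \hbox{adim}(X,m+j,m) - \hbox{vdim}(X,m+j,m) = 0$ for all $j \ge 0$ and $m \ge 1$; equivalently, that $\hbox{vdim}$ is always nonnegative and equals $\hbox{adim}$ in this situation.

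The key point to establish is: \emph{if $I := \hbox{gin}(I_X)$ is a lex-segment ideal, then so is the colon ideal $J = I : x_0^{j+1}$ for every $j \ge 0$, and moreover the Hilbert function of $R/J$ is, in each degree, exactly the virtual dimension $\hbox{vdim}(I, t, m)$ with $t = m+j$ — forcing $AV_{X,j}(m)=0$.} First I would record the standard fact that colon-ideals of lex-segment ideals by powers of the top variable $x_0$ remain lex-segment: if $[I]_d$ is spanned by a lex segment, then the monomials $u$ of degree $d-(j+1)$ with $x_0^{j+1}u \in I$ again form a lex segment, because "being $\ge$ in lex order" is preserved under multiplication by $x_0^{j+1}$ and $u \ge v \implies x_0^{j+1}u \ge x_0^{j+1}v$. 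Thus $J$ is lex. Next, for a lex-segment ideal, the dimension of $[I_Q^m \cap I]_t$ with $Q = [1:0:\cdots:0]$ (the quantity computing $\hbox{adim}$ by the earlier Lemma \cite[Lemma 3.1]{FGHM}) can be computed combinatorially: for a lex ideal, the monomials in $[I]_t$ divisible by $x_0^m$ together with the monomials forced into $I_Q^m$ interact in the maximally "efficient" way, so that imposing the fat-point conditions at $Q$ removes exactly $\binom{m+n-1}{n}$ dimensions whenever anything survives — i.e.\ the actual and virtual counts agree. Concretely I would argue that $[\hbox{gin}(I_X) : x_0^{j+1}]$ being lex makes $h_{R/J}(m-1)$ match the naive count $\max\{0, \dim[I_X]_{m+j} - \binom{m+n-1}{n}\}$ on the nose, which is precisely $\hbox{vdim}$ (up to the truncation at $0$), hence $AV_{X,j}(m)=0$.

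The main obstacle is the middle step: showing that for a lex-segment ideal $I$, imposing multiplicity $m$ at the special point $Q=[1:0:\cdots:0]$ imposes the \emph{full} expected number $\binom{m+n-1}{n}$ of independent conditions on $[I]_t$ (equivalently, that $[I \cap I_Q^m]_t$ has dimension $\dim[I]_t - \binom{m+n-1}{n}$ whenever this is nonnegative). The subtlety is that $Q$ is a very special — not general — point, yet by Lemma \cite[Lemma 3.1]{FGHM} this special computation nonetheless equals the general-position actual dimension; the lex structure is exactly what prevents the conditions from degenerating. I expect this to follow from a direct monomial bookkeeping argument: the conditions imposed by $mQ$ on a monomial $\mu$ of degree $t$ are "read off" from the exponents of $x_1,\dots,x_n$ in $\mu$ being small (total $< m$), and a lex segment, being a down-set under lex order, has the property that its intersection with the set of such "low $x_0$-degree complement" monomials has the maximal possible size. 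Packaging this as "$J$ is lex $\Rightarrow h_{R/J}(m-1) = \hbox{vdim}$" via the earlier lemmas is then routine, and the final sentence of the corollary's conclusion follows since the equality $\hbox{adim}=\hbox{vdim}$ in all degrees and multiplicities is, by definition, the statement that $X$ admits no unexpected hypersurface of any degree or multiplicity.
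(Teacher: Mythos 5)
Your core combinatorial insight is the right one, but the reduction you build around it is to a \emph{false} statement, and this is a genuine gap. You claim it suffices to show $AV_{X,j}(m)=0$ for all $j\ge 0$ and $m\ge 1$, ``equivalently, that $\vdim$ is always nonnegative and equals $\adim$.'' Neither holds: whenever $\dim[I_X]_t<\binom{n+m-1}{n}$ (for instance for any $t$ below the initial degree of $I_X$, or for $t=m=1$ with $X$ nondegenerate) one has $\vdim(X,t,m)<0$, hence $AV_{X,t-m}(m)=\adim(X,t,m)-\vdim(X,t,m)\ge -\vdim(X,t,m)>0$ automatically, with no unexpectedness because $\adim(X,t,m)=0$. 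Theorem \ref{shifted O-seq} reflects this: for $j+1$ less than the initial degree of $I_X$ one has $x_0^{j+1}\notin \mathrm{gin}(I_X)$, so $1\notin J$ and $AV_{X,j}(1)=h_{R/J}(0)=1$, even when $\mathrm{gin}(I_X)$ is lex. The correct target is not $AV_{X,j}(m)=0$ but $\adim(X,t,m)=\max\{0,\vdim(X,t,m)\}$, i.e., $AV_{X,j}(m)=\max\{0,-\vdim(X,t,m)\}$. Relatedly, your closing bookkeeping conflates $h_{R/J}(m-1)$, which by Theorem \ref{shifted O-seq} equals the \emph{difference} $\adim-\vdim$, with the actual dimension: if $h_{R/J}(m-1)$ really equalled $\max\{0,\vdim\}$ you would obtain $AV_{X,j}(m)=\max\{0,\vdim\}$, which is not the identity needed and in the range $\vdim>0$ would actually assert unexpectedness.

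The repair is short and bypasses both the colon ideal and Theorem \ref{shifted O-seq}. By \cite[Lemma 3.1]{FGHM}, $\adim(X,t,m)=\dim[\mathrm{gin}(I_X)\cap I_Q^m]_t$ with $Q=[1:0:\dots:0]$, and since both ideals are monomial this counts the monomials of $[\mathrm{gin}(I_X)]_t$ \emph{not} divisible by $x_0^{t-m+1}$. The degree $t$ monomials divisible by $x_0^{t-m+1}$ form a lex segment of size exactly $\binom{n+m-1}{n}$; by hypothesis $[\mathrm{gin}(I_X)]_t$ is also a lex segment; and any two lex segments in a fixed degree are nested because lex is a total order. So either $[\mathrm{gin}(I_X)]_t$ is contained in the divisibility segment, giving $\adim(X,t,m)=0$ and $\vdim(X,t,m)\le 0$, or it contains it, giving $\adim(X,t,m)=\dim[I_X]_t-\binom{n+m-1}{n}=\vdim(X,t,m)\ge 0$. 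In both cases $\adim(X,t,m)=\max\{0,\vdim(X,t,m)\}$, so no unexpected hypersurface exists. Note that the survey omits the proof of this proposition, so there is no written argument to compare against; but the heuristic you state in the middle of your second paragraph --- that for a lex segment the fat point at $Q$ imposes the full $\binom{n+m-1}{n}$ conditions whenever anything survives --- is precisely this dichotomy, and it is the conclusion you should have carried to the end in place of $AV_{X,j}\equiv 0$.
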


\noindent The proofs of the above two propositions are technical and are omitted here.


\section{AV sequences for specific classes of subschemes of $\PP^n$}\label{juan2}

Many interesting kinds of behavior have been found, both theoretically and experimentally, for $AV$-sequences in different geometric settings. Here we review some of these, including a very intriguing conjecture. It is somewhat surprising that such nice formulas follow from the original definition of $AV$-sequences and that the $AV$-sequences reflect the geometry of the subscheme in such a strong way.

The first result involves curves in $\PP^3$ and unexpected cones (i.e. $j = t-m = 0$). We have seen in previous sections the importance of unexpected cones in particular. The fact that $AV_{C,0}(t)$ is constant in the next result should be interpreted as meaning that the existence (or not) of unexpected cones persists over all degrees $t \geq e$.

\begin{theorem}[{\cite[Theorem 6.2]{FGHM}}] 
Let $C \subset \PP^3$ be a reduced, equidimensional curve of degree $e$ and arithmetic genus $g$. Then
\[
AV_{C,0}(t) = \binom{e-1}{2} -g
\]
for all $t \geq e$. Moreover, $AV_{C,0}(e) =0$ if and only if $AV_{C,0}(t) = 0$ for $t \geq e$ if and only if $C$ lies in a plane.
\end{theorem}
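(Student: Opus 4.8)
The plan is to evaluate $AV_{C,0}(t)=\adim(C,t,t)-\vdim(C,t,t)$ directly in every degree $t\ge e$ and then read the ``moreover'' statement off the resulting closed form. The first ingredient is an exact count of the cones of degree $t$ through $C$ with vertex the general point $P$ (this is $\adim$). Fix coordinates so that $P=[1:0:0:0]$, so $I_P=(x_1,x_2,x_3)$; in degree $t$ the space $[I_P^t]_t$ consists precisely of the forms in $x_1,x_2,x_3$, i.e.\ the equations of cones with vertex $P$. Hence $[I_C\cap I_P^t]_t$ is the space of degree-$t$ cones through $C$, and projecting from $P$ identifies it with the space of degree-$t$ plane curves in $\PP^2$ containing the image $\overline C:=\pi_P(C)$. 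For a general $P$ the map $\pi_P$ restricts to a birational morphism $C\to\overline C$ onto a reduced plane curve of degree $e$: when $C$ is nondegenerate this is exactly \Cref{cone in P3} (the cone over $C$ being the unique degree-$e$ cone), and when $C$ is contained in a plane $H$ the general $P$ lies off $H$ and $\pi_P|_H$ is an isomorphism onto $\PP^2$. Either way $I_{\overline C}$ is generated by a single squarefree form of degree $e$, so the degree-$t$ plane curves through $\overline C$ are precisely its multiples, giving $\adim(C,t,t)=\dim[I_C\cap I_P^t]_t=\binom{t-e+2}{2}$ for $t\ge e$.

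For $\vdim$, note $\vdim(C,t,t)=\dim[I_C]_t-\binom{t+2}{3}=\binom{t+3}{3}-\dim[R/I_C]_t-\binom{t+2}{3}=\binom{t+2}{2}-h_C(t)$, so everything comes down to knowing that the Hilbert function $h_C$ already equals the Hilbert polynomial $P_C(t)=et+1-g$ for $t\ge e$. Here I would invoke the Gruson--Lazarsfeld--Peskine regularity bound, together with its extension to reducible reduced curves (the degenerate case being a direct computation, since a plane curve of degree $e$ is ACM): a reduced curve of degree $e$ in $\PP^3$ satisfies $\operatorname{reg}(\mathcal I_C)\le e$, hence $h^1(\mathcal I_C(t))=h^1(\mathcal O_C(t))=0$ and so $h_C(t)=\dim[R/I_C]_t=\chi(\mathcal O_C(t))=et+1-g$ for $t\ge e$. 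Combining, $AV_{C,0}(t)=\binom{t-e+2}{2}-\binom{t+2}{2}+(et+1-g)$, and the elementary identity $\binom{t+2}{2}-\binom{t-e+2}{2}=et+1-\binom{e-1}{2}$ collapses this to $\binom{e-1}{2}-g$, which is the asserted formula.

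The ``moreover'' part is then largely automatic. The equivalence $AV_{C,0}(e)=0\iff AV_{C,0}(t)=0$ for all $t\ge e$ holds because the common value $\binom{e-1}{2}-g$ does not depend on $t$. If $C$ lies in a plane it is a reduced plane curve of degree $e$, so $g=\binom{e-1}{2}$ and $AV_{C,0}\equiv0$. Conversely $AV_{C,0}(e)=0$ forces $g=\binom{e-1}{2}$, and I would deduce planarity from the projection picture of the first step: the finite birational morphism $\pi_P\colon C\to\overline C$ gives an inclusion $\mathcal O_{\overline C}\hookrightarrow(\pi_P)_*\mathcal O_C$ whose cokernel has finite length $\delta$, and since $\chi((\pi_P)_*\mathcal O_C)=\chi(\mathcal O_C)$ we get $\binom{e-1}{2}-g=p_a(\overline C)-p_a(C)=\delta$. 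Thus $AV_{C,0}(e)=0$ iff $\delta=0$ iff $\pi_P$ is an isomorphism for general $P$, iff no general point of $\PP^3$ lies on a secant line of $C$, iff $\operatorname{Sec}(C)\ne\PP^3$, iff $C$ is degenerate.

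The step I expect to be the main obstacle is the external input in the second paragraph together with the final implication of the third: although ``$\operatorname{reg}(\mathcal I_C)\le e$ for a reduced curve'' and ``arithmetic genus $\binom{e-1}{2}$ forces the curve into a plane'' are classical, stating and citing them at the level of generality permitted here --- $C$ possibly reducible, disconnected and singular --- requires some care, and one must also be a little careful about the standard facts on generic projections of reduced curves used in the first step. Everything else is bookkeeping with binomial coefficients.
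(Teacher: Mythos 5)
Your proof follows the same skeleton as the paper's: both reduce the claim to the two counts $\adim(C,t,t)=\binom{t-e+2}{2}$ and $\vdim(C,t,t)=\binom{t+2}{2}-(te-g+1)$, the latter via Gruson--Lazarsfeld--Peskine \cite{GLP}, and then the same binomial identity. For the actual dimension the paper argues with B\'ezout that any degree-$t$ cone through $C$ with vertex $P$ must contain the cone over $C$ as a component, while you pass to the projected plane curve $\overline C$ and use that its ideal is principal of degree $e$; these are the same argument in different clothing, and both need the general-position fact that $\pi_P|_C$ is birational onto a degree-$e$ image. The genuine divergence is the final equivalence: the paper simply cites \cite{HMNT} (Proposition 2.1 there) for ``$g=\binom{e-1}{2}$ if and only if $C$ is planar,'' whereas you rederive it from $\binom{e-1}{2}-g=\delta(\pi_P)$. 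That route is attractive and self-contained, but one link in your chain is stated too loosely: ``$\pi_P$ is an isomorphism iff no general point lies on a secant line iff $\operatorname{Sec}(C)\neq\PP^3$.'' For the reducible, singular curves allowed here, ``secant'' must mean a line meeting $C$ in a scheme of length at least two; honest two-point secants do not suffice. Concretely, take $C$ to be three concurrent lines spanning $\PP^3$: then $e=3$, $g=0<1=\binom{e-1}{2}$, and the union of all two-point secants is just three planes, so a general $P$ lies on none of them --- yet $\pi_P$ is not an isomorphism, because the general line through $P$ and the point of concurrency meets $C$ in a length-two scheme there, and indeed $\delta(\pi_P)=1$. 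With the scheme-theoretic secant variety (and the classical characteristic-zero fact that it fills $\PP^3$ exactly when $C$ is nondegenerate), your argument closes; you correctly flagged this as the point needing care, and supplying that input (or simply citing \cite{HMNT} as the paper does) completes the proof.
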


\begin{proof}
Note that this fits well with Corollary \ref{cor: a=v}, and that there are no cones of degree less than $e$ by B\'ezout's theorem. For the virtual dimension we have
\[
\begin{array}{rcl}
\dim [I_C]_t - \binom{t+2}{3} & = & \binom{t+3}{3} - h_C(t) - \binom{t+2}{3} \\
& = & \binom{t+2}{2} - [te - g + 1]
\end{array}
\]
where $h_C$ is the Hilbert polynomial of $C$ and the fact that for $t \geq e$ we have $h_C(t) = te - g +1$ follows from a result of Gruson, Lazarsfeld and Peskine \cite{GLP}. By B\'ezout one checks that if $S$ is a cone of degree $t$ with vertex $P$ and containing $C$ then $S$ contains as a component the cone over $C$ with vertex $P$. Then the actual dimension is the (vector space) dimension of the linear system of plane curves of degree $t-e$, i.e. it is $\binom{t-e+2}{2}$. Thus it follows after a calculation that
\[
AV_{C,0}(t) = \binom{e-1}{2} - g
\]
and the last part of the theorem follows from the fact that since $\deg C = e$,  $g= \binom{e-1}{2}$ if and only if $C$ is a plane curve. (See {\cite[Proposition 2.1]{HMNT}}.)
\end{proof}

The next result moves in the direction of finite sets of points, which we have studied in the earlier sections of this paper, and again shows the influence of the geometry of the subschemes. We state it without proof.

\begin{theorem}[{\cite[Theorem 6.4]{FGHM}}]  \label{curve, pts}
Let $X \subset \PP^3$ be a finite set of points. Let $C \subset \PP^3$ be a reduced, equidimensional curve of degree $e$ and arithmetic genus $g$. Assume that $X$ is disjoint from $C$. Let $t$ be the smallest integer such that

\begin{itemize}
    \item[(i)] $|X| \leq \binom{t+2}{2}$; and
    \item[(ii)] $X$ imposes independent conditions on forms of degree $t$.
\end{itemize}

\noindent Then
\[
AV_{X\cup C,0} (t+e) = AV_{X,0}(t) + \left [ \binom{e-1}{2} -g \right ].
\]
\end{theorem}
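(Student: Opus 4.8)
The plan is to compare $AV_{X\cup C,0}(t+e)$ with $AV_{X,0}(t)$ by splitting each $AV$-value into its actual and virtual parts. Write $m=t+e$, and recall $AV_{Y,0}(s)=\adim(Y,s,s)-\vdim(Y,s,s)$ with $\vdim(Y,s,s)=\dim[I_Y]_s-\binom{s+2}{3}$. It therefore suffices to prove the two assertions
\[
\adim(X\cup C,m,m)=\adim(X,t,t)\qquad\text{and}\qquad \vdim(X,t,t)-\vdim(X\cup C,m,m)=\binom{e-1}{2}-g,
\]
since adding them yields the claimed identity.

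For the first assertion I would argue geometrically with cones. Fix a general point $P$ and let $S_C=\bigcup_{Q\in C}\overline{PQ}$ be the cone over $C$ with vertex $P$. Since $P$ is general and the ground field has characteristic zero, $C$ maps birationally onto its projection from $P$ to a plane, a reduced plane curve of degree $e$; hence $S_C$ is a hypersurface defined by a single squarefree form $F_C$ with $\deg F_C=e$ and $\mathrm{mult}_P F_C=e$. Moreover, since $X$ is a finite set disjoint from $C$, for general $P$ one has $X\cap S_C=\emptyset$ — a point $x$ lies on $S_C$ exactly when $P$ lies on the cone over $C$ with vertex $x$, a proper surface — so $F_C(x)\neq 0$ for every $x\in X$. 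Now the linear map $G\mapsto F_C\cdot G$ sends $[I_X\cap I_P^t]_t$ into $[I_{X\cup C}\cap I_P^m]_m$ (degrees add, multiplicities at $P$ add, and $F_C\in I_C$) and is clearly injective. For surjectivity, let $S\in[I_{X\cup C}\cap I_P^m]_m$ be nonzero; B\'ezout applied to each line $\overline{PQ}$ with $Q\in C$ — which meets $V(S)$ to order $\geq m=\deg S$ at $P$ and also at $Q$, hence lies in $V(S)$ — shows $S_C\subseteq V(S)$, so $F_C\mid S$ and $S=F_C\,G$ with $\deg G=t$; additivity of $\mathrm{mult}_P$ forces $\mathrm{mult}_P G=t$, so $G$ is a cone of degree $t$ with vertex $P$, while $X\subseteq V(S)$ together with $F_C(x)\neq 0$ gives $X\subseteq V(G)$. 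Hence $G\mapsto F_C G$ is an isomorphism, proving $\adim(X\cup C,m,m)=\adim(X,t,t)$.

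For the second assertion I would compute the relevant Hilbert functions. By condition (ii), $h_X(d)=|X|$ for all $d\geq t$, so $\dim[I_X]_t=\binom{t+3}{3}-|X|$ and hence $\vdim(X,t,t)=\binom{t+2}{2}-|X|$. For $X\cup C$, apply the exact sequence
\[
0\to R/I_{X\cup C}\to R/I_X\oplus R/I_C\to R/(I_X+I_C)\to 0
\]
in degree $m$. The Gruson--Lazarsfeld--Peskine bound (as invoked in the proof of the preceding theorem) gives $h_C(m)=me-g+1$ since $m\geq e$. The crucial step is that $h_{R/(I_X+I_C)}(m)=0$, i.e. that the restriction map $[I_C]_m\to[R/I_X]_m\cong K^{|X|}$ is surjective: for each $x\in X$ choose $G_x\in[R]_t$ vanishing on $X\setminus\{x\}$ but not at $x$ (condition (ii) again), so that $F_C G_x\in[I_C]_m$ restricts on $X$ to a nonzero multiple of the point-functional at $x$, and these span. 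Consequently $h_{X\cup C}(m)=|X|+me-g+1$, so $\vdim(X\cup C,m,m)=\binom{m+2}{2}-|X|-me+g-1$. Subtracting, and using $\binom{k+3}{3}-\binom{k+2}{3}=\binom{k+2}{2}$ together with $\binom{t+e+2}{2}-\binom{t+2}{2}=te+\frac{1}{2}e(e+3)$, collapses $\vdim(X,t,t)-\vdim(X\cup C,m,m)$ to $\frac{1}{2}(e^2-3e+2)-g=\binom{e-1}{2}-g$, as needed.

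I expect the main obstacle to be the vanishing $h_{R/(I_X+I_C)}(m)=0$: it amounts to saying that degree-$m$ surfaces through $C$ already separate the points of $X$, and this is exactly where the disjointness of $X$ and $C$, the degree-$e$ cone $S_C$ avoiding $X$, and the independent-conditions hypothesis (ii) are used together. A secondary technical point, needed already for the first assertion, is verifying that the cone over $C$ from a general point is a reduced hypersurface of degree exactly $e$ (birationality of a general projection of a curve, valid in characteristic zero) and the accompanying B\'ezout divisibility argument; once these are in hand, the remainder is bookkeeping with binomial coefficients and exact sequences.
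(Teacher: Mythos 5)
Your argument is correct. Note that the survey states this result without proof (it is quoted from \cite[Theorem 6.4]{FGHM}), so there is no in-paper proof to compare against; what you have done is extend, in the natural way, the technique the paper does exhibit for the preceding theorem on $AV_{C,0}$: the B\'ezout/cone-factorization step (every degree-$m$ cone with vertex $P$ containing $C$ contains the degree-$e$ cone $S_C$, so multiplication by $F_C$ gives the bijection $[I_X\cap I_P^t]_t\cong[I_{X\cup C}\cap I_P^{t+e}]_{t+e}$) together with the Gruson--Lazarsfeld--Peskine evaluation $h_C(m)=me-g+1$. The new ingredient you supply, the vanishing $[R/(I_X+I_C)]_m=0$ via the sections $F_CG_x$, is exactly right and is where disjointness and hypothesis (ii) enter; the verification that $S_C$ is a reduced degree-$e$ hypersurface avoiding $X$ for general $P$ is standard in characteristic zero, as you note. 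One observation worth recording: your proof never uses hypothesis (i) ($|X|\le\binom{t+2}{2}$) nor the minimality of $t$ --- only that $X$ imposes independent conditions on forms of degree $t$ --- so it actually establishes the identity for every $t$ satisfying (ii); hypothesis (i) amounts to $\vdim(X,t,t)=\binom{t+2}{2}-|X|\ge 0$, which plays no role in the $AV$-bookkeeping. This is a harmless strengthening rather than a gap, and a quick check on small examples (e.g.\ four general points and a line with $t=1$) confirms the more general statement.
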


Theorem \ref{curve, pts} has the following consequence for unexpectedness.

\begin{corollary}
Let $X \subset \PP^3$ be a finite set of points. Let $C$ be a reduced plane curve in $\PP^3$ of degree $e$, disjoint from $X$. Then $X$ has an unexpected cone of degree $t$ if and only if $X \cup C$ has an unexpected cone of degree $t + e$. Furthermore, $AV_{X,0} (t) = AV_{X\cup C,0} (t + e)$ for all $t$.
\end{corollary}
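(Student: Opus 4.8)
The plan is to deduce the statement from Theorem~\ref{curve, pts}. The key preliminary remark is that a reduced plane curve $C$ of degree $e$ is a divisor of degree $e$ inside some $\PP^2 \subset \PP^3$, hence a complete intersection of type $(1,e)$; in particular $C$ is ACM (so $h^1(\mathcal I_C(k)) = 0$ for all $k$) and its arithmetic genus is $g = \binom{e-1}{2}$. Thus the correction term $\binom{e-1}{2} - g$ occurring in Theorem~\ref{curve, pts} vanishes, and that theorem already gives $AV_{X \cup C,0}(t_0 + e) = AV_{X,0}(t_0)$, where $t_0$ is the least integer for which conditions (i) and (ii) of Theorem~\ref{curve, pts} hold.

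To upgrade this to all relevant $t$, I would rerun the two mechanisms behind Theorem~\ref{curve, pts}; since conditions (i) and (ii) are monotone in $t$ (once true they persist, as $\binom{t+2}{2}$ is increasing and the Hilbert function of $X$ stabilizes at $|X|$), there is nothing special about $t_0$. First, a B\'ezout argument — the same one used for Proposition~\ref{cone in P3} — shows that any degree $t+e$ cone with vertex a general point $P$ through $X \cup C$ contains the degree $e$ cone over $C$ with vertex $P$, whose residual is a degree $t$ cone with vertex $P$ through $X$ (here $X \cap C = \emptyset$ and $P$ general are used to see that the cone over $C$ misses $X$); conversely any such product lies in $I_{X\cup C}$. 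This gives $\adim(X \cup C, t+e, t+e) = \adim(X, t, t)$. Second, the exact sequence $0 \to \mathcal I_{X \cup C} \to \mathcal I_C \to \mathcal O_X \to 0$ (exact because $X \cap C = \emptyset$), twisted by $t+e$, together with $h^1(\mathcal I_C(t+e)) = 0$ and the surjectivity of $H^0(\mathcal I_C(t+e)) \to H^0(\mathcal O_X) \cong K^{|X|}$, yields $\dim [I_{X \cup C}]_{t+e} = \dim [I_C]_{t+e} - |X|$. Feeding in $\dim [I_C]_{t+e}$ from the type $(1,e)$ complete intersection resolution of $I_C$, the target identity $AV_{X \cup C, 0}(t+e) = AV_{X,0}(t)$ reduces to a routine binomial cancellation, matching the difference of actual dimensions (which is $0$) against the difference of virtual dimensions.

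The assertion about unexpected cones is then formal. By the Remark following the definition of $AV$-sequences, $X$ has an unexpected cone of degree $t$ exactly when $\adim(X,t,t) > 0$ and $AV_{X,0}(t) > 0$, and $X \cup C$ has an unexpected cone of degree $t+e$ exactly when $\adim(X \cup C, t+e, t+e) > 0$ and $AV_{X \cup C, 0}(t+e) > 0$; the two equalities just established identify both pairs of conditions, giving the biconditional.

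The step that carries the real content is the surjectivity of $H^0(\mathcal I_C(t+e)) \to K^{|X|}$, which is precisely where hypothesis (ii) of Theorem~\ref{curve, pts} enters: once $X$ imposes independent conditions on forms of degree $t$ it does so in all degrees $\ge t$, and then already the multiples of the linear form cutting out the plane of $C$ surject onto $K^{|X|}$. If $X$ fails to impose independent conditions in degree $t$, the image of $H^0(\mathcal I_C(t+e))$ in $K^{|X|}$ still has dimension $\ge h_X(t+e-1)$, which may strictly exceed $h_X(t)$, so the clean identity can break down; hence ``for all $t$'' is to be read as ``for all $t$ such that $X$ imposes independent conditions on forms of degree $t$'', i.e.\ exactly the range in which Theorem~\ref{curve, pts} operates.
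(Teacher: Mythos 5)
Your proof is correct and its core is exactly the paper's argument: the paper's entire proof is the one-line observation that a plane curve of degree $e$ has arithmetic genus $\binom{e-1}{2}$, so the correction term in Theorem~\ref{curve, pts} vanishes. The additional material you supply is not wasted, though: you correctly notice that Theorem~\ref{curve, pts} as stated only produces the equality for the single value of $t$ singled out by conditions (i) and (ii), whereas the corollary asserts it ``for all $t$,'' a point the paper's proof silently glosses over; your B\'ezout argument for the actual dimensions and your cohomological computation of $\dim [I_{X\cup C}]_{t+e}$ are the right way to fill that in, and your closing caveat that the clean identity is only guaranteed in the range where $X$ imposes independent conditions is a fair reading of what the corollary can actually deliver from the quoted theorem. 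One small slip: multiples of the linear form $\ell$ cutting out the plane of $C$ alone need not surject onto $K^{|X|}$ if some point of $X$ lies in that plane without lying on $C$; you need the full $[I_C]_{t+e} = \ell\,[R]_{t+e-1} + F\,[R]_{t}$ (with $F$ the degree $e$ generator), using that no point of $X$ annihilates both $\ell$ and $F$.
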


\begin{proof}
This is immediate since the arithmetic genus of a plane curve of degree $e$ is $\binom{e-1}{2}$.
\end{proof}

One of the most intriguing examples of $AV$-sequences is actually still partly a conjecture, supported by many experiments and some partial results. We first recall a definition.

\begin{definition}
An {\it SI-sequence} is a finite, non-zero, symmetric $O$-sequence such that the first half is a differentiable $O$-sequence (i.e., also the first difference of the first half is an $O$-sequence). We will refer to an SI-sequence $(1,c,\dots, c,1)$ as a {\it codimension $c$ SI-sequence}.
\end{definition}

The term ``SI-sequence" refers to R. Stanley and A. Iarrobino. The significance of SI-sequences is that they characterize the $h$-vectors of arithmetically Gorenstein subschemes of projective space whose artinian reductions have the Weak Lefschetz Property (WLP). In codimension three they also characterize the $h$-vectors of {\it all} arithmetically Gorenstein subschemes, even though it is only conjectured that all artinian Gorenstein quotients of $k[x,y,z]$ have the WLP. See \cite{BMMNZ} for partial results on this conjecture. Note that SI-sequences are automatically unimodal. See \cite{harima} for additional properties of SI-sequences. 

\begin{conjecture} [{\cite[Conjecture 5.1]{FGHM}}]
Let $X \subset \PP^3$ be a smooth ACM curve not lying on a quadric surface. Then the sequence $AV_{X,1}$ is an SI-sequence (shifted by 1). The last non-zero term in this sequence is $AV_{X,1} (deg X - 5)$, so the SI-sequence ends in degree $\deg X - 6$.
\end{conjecture}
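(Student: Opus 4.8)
Since the conjecture is open, what follows is a program: I indicate the steps I would carry out and flag where the real difficulty lies. \emph{Reduction to an artinian Hilbert function.} By \Cref{shifted O-seq} it suffices to understand $h_{R/J}$ for $J = \mathrm{gin}(I_X):x_0^2$, and by \Cref{FGHM 2.11} one has $AV_{X,1}(m) = \dim[R/(I_X + I_P^m)]_{m+1}$. Since $P$ is general it is not on $X$, so $V(I_X + I_P^m) = \emptyset$ and $R/(I_X + I_P^m)$ is \emph{artinian}. Writing $S = R/I_X$ (Cohen--Macaulay of dimension $2$, as $X$ is ACM) and $\mathfrak q = (I_X + I_P)/I_X$, an $\mathfrak m_S$-primary ideal generated by three general linear forms of $S$, we get $R/(I_X + I_P^m) = S/\mathfrak q^m$ and hence
\[
AV_{X,1}(m) = \dim [S/\mathfrak q^m]_{m+1}.
\]
Since $X$ is ACM, \Cref{FGHM 2.13} also gives $AV_{X,1}(m) = h^1(\mathcal I_{X+mP}(m+1))$, the form I would use in Step 3; and because $X$ is ACM, nondegenerate and off every quadric, its $h$-vector has the shape $(1,2,3,h_3,\dots,h_s)$, so $\deg X = 6 + \sum_{i\ge 3} h_i$ and $\deg X - 6 = \sum_{i\ge 3} h_i$ is the socle degree to match.

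\emph{Finiteness and the terminal degree.} From $0\to\mathcal I_{X+mP}\to\mathcal I_X\to\mathcal O_{mP}\to 0$ and $h^1(\mathcal I_X(t))=0$ one gets $AV_{X,1}(m) = \dim\coker\big([I_X]_{m+1}\to H^0(\mathcal O_{mP})\big)$, so $AV_{X,1}(m)=0$ exactly when restriction to the general fat point $mP$ is surjective. I would bound the first such $m$ from the Castelnuovo--Mumford regularity of $X$ (governed by the length $s$ of the $h$-vector), Riemann--Roch together with $h^1(\mathcal O_X(t))=0$ for large $t$ (smoothness keeping the numerics clean), and the genericity of $P$; a careful count should put the transition at $m = \deg X - 5$, so that the left-shifted sequence is supported in degrees $0,\dots,\deg X - 6$.

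\emph{Symmetry (the crux).} To show $AV_{X,1}(m) = AV_{X,1}(\deg X - 4 - m)$ I would aim to realize the left-shifted sequence as the $h$-vector of an artinian \emph{Gorenstein} graded algebra $G$, making symmetry automatic. The natural source is Serre duality on $\PP^3$ applied to $\mathcal I_{X+mP}$, combined with liaison: being ACM of codimension two, $X$ is linked by a complete intersection $V$ (of two forms of degrees above $\operatorname{reg}(X)$) to an ACM curve $X'$, and I would try to show that residuation inside $V$ sends $X + mP$ to $X' + m'P$ up to a twist with $m + m' = \deg X - 4$. Then the cohomological pairing $h^1(\mathcal I_{X+mP}(m+1)) \cong h^1(\mathcal I_{X'+m'P}(\,\cdot\,))^{*}$, together with the complementarity of the $h$-vectors of $X$ and $X'$, should yield the symmetry. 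Making the residual-scheme computation with the embedded fat points exact, and pinning the shift to $\deg X - 4$, is where I expect the main obstruction.

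\emph{Differentiability of the first half.} Granting symmetry, it remains to check that the first difference of the first half of the (shifted) sequence is again an $O$-sequence; by the characterization of SI-sequences recalled above, this amounts to the algebra $G$ of Step 3 having the Weak Lefschetz Property. I would try to deduce WLP from genericity: $G$ is produced from $X$ by imposing a general point and general linear forms, so the maps $\times L\colon [G]_i\to[G]_{i+1}$ should have maximal rank for degree reasons. But establishing WLP is notoriously delicate, and --- together with the residuation step --- this is presumably why the statement is still only a conjecture; a complete proof will likely require a new idea at one of these two points.
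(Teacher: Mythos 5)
This statement is an open conjecture (Conjecture 5.1 of \cite{FGHM}); the paper offers no proof of it, only the surrounding remarks, the most substantive of which is that \cite[Theorem 5.4]{FGHM} establishes non-vanishing, unimodality, and differentiability of the increasing part for irreducible ACM curves off a quadric. You correctly treat the statement as open and offer a program rather than a proof, so there is nothing to compare against a paper proof. Your reduction steps are sound and consistent with the paper's framework: the identification $AV_{X,1}(m)=\dim[R/(I_X+I_P^m)]_{m+1}$ is exactly Lemma \ref{FGHM 2.11}, the artinian reformulation $S/\mathfrak{q}^m$ is correct since $P\notin X$ and $\dim S=2$, and the arithmetic $\deg X-6=\sum_{i\ge 3}h_i$ for the $h$-vector $(1,2,3,h_3,\dots)$ of a curve off a quadric is right. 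You also correctly locate the two genuinely open points, symmetry and the Lefschetz-type condition on the first half, and the liaison/duality idea for symmetry is a reasonable direction.

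The concrete weakness in your program is that it does not explain where the hypotheses \emph{smooth} and (implicitly) irreducible enter. The paper records counterexamples when $X$ is ACM but not irreducible, and when $X$ is ACM and irreducible but not smooth; so any successful symmetry argument must break for such curves. Your residuation-plus-Serre-duality step, as described, uses only that $X$ is ACM of codimension two together with genericity of $P$ and of the linking complete intersection, and would therefore apply verbatim to the singular and reducible counterexamples --- which means either the residual of $X+mP$ is not of the claimed form $X'+m'P$ in general, or the shift $m+m'=\deg X-4$ fails, precisely in a way controlled by the singularities of $X$. Pinning down how smoothness rescues this is not a technicality to be deferred; it is the missing idea. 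Similarly, your appeal to ``genericity'' for the WLP of the putative Gorenstein algebra $G$ is the standard heuristic that is known to fail in many natural families, and the paper's remark that even the codimension-three artinian Gorenstein WLP is itself only conjectural should temper the expectation that this step is routine. In short: the skeleton is plausible and matches what is proved in \cite{FGHM}, but both load-bearing steps are unsupported, and the first one as stated would prove too much.
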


Here are some remarks concerning this conjecture. See \cite[Section 5]{FGHM}  for further details. 

\begin{enumerate}
    \item Despite the strong connection between SI-sequences and artinian Gorenstein algebras, we have not yet seen a direct connection between the setting of our conjecture and Gorenstein algebras. This is part of the allure of this conjecture. 
    
    \item We do not claim that the SI-sequence is necessarily of any given codimension, and indeed we have examples where it has the form $(1,2,\dots)$, $(1,3,\dots)$ and $(1,4,\dots)$. 
    
    \item We do not know if {\it all} SI-sequences (or possibly all SI-sequences $(1,c,\dots)$ with $c \leq 4$) arise as an AV-sequence in this way.

    \item The conjecture is false if we do not have $X$ a smooth ACM curve in $\PP^3$. We have counterexamples if
    
    \begin{itemize}
        \item $X$ is a non-ACM curve in $\PP^3$;
        
        \item $X$ is an ACM curve in $\PP^4$;
        
        \item $X$ is the union of an ACM curve in $\PP^3$ and a finite set of points (even just one point);
        
        \item we consider the AV-sequence $AV_{X,0}$ instead of $AV_{X,1}$;
        
        \item $X$ is zero-dimensional;
        
        \item $X$ is ACM but not irreducible;
        
        \item $X$ is ACM and irreducible but not smooth.
    \end{itemize}
    
    \item If $X$ is an irreducible ACM curve in $\PP^3$ lying on a quadric surface, then  $AV_{X,1}(m)$ is zero. This follows from Corollary \ref{cor: a=v} taking $d=1$ so $j = t-m \geq 1$ -- note that the conjecture is for $j=1$.
    
    \item In \cite[Theorem 5.4]{FGHM}  we show that if $X \subset \PP^3$ is an irreducible ACM curve not lying on a quadric surface then the sequence $AV_{X,1} (m)$ is non-zero and unimodal. Furthermore, the increasing part is a differentiable $O$-sequence. The proof is long and technical, and is omitted here. This result proves parts of the conjecture.
    
\end{enumerate}

Finally, we also have a result for certain curves in $\PP^n$ and their unexpected cones, in terms of $AV$-sequences.

\begin{theorem} [{\cite[Theorem 7.25 and Corollary 7.26]{CFFHMSS}}] 
Let $A$ be the set of coordinate points of $\PP^n$, $|A| = n + 1$. Let $C$ be the union of the $\binom{n+1}{2}$ lines spanned by the pairs of points of $A$. Then the AV-sequence for $C$ measuring unexpected cones is
\[
AV_{C,0}(m) = n + 1
\]
for all $m \gg 0$. In particular, for $m \gg 0$ the dimension of the linear system of cones of degree $m$ with general
vertex $P$ containing $C$ is at least $n + 1$ more than one expects.

If $n \geq 5$ then $C$ has unexpected cones of all degrees $m \geq 3$. Moreover, the dimension of the linear system of cones of degree $m$ with general vertex $P$ containing $C$ exceeds the expected dimension by: 

\begin{itemize} 
\item at least $m$ for all $3 \leq m \leq n + 1$; 
\item at least $n+1$ for all $m \geq n+1$; 
\item and exactly $n+1$ for $m \gg 0$.
\end{itemize}
\end{theorem}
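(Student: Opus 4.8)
The plan is to turn $AV_{C,0}(m)$ into a comparison of two Hilbert functions, compute one combinatorially and the other via a genus computation, and then treat the low-degree estimates cohomologically. Fix coordinates with the general point $P=[0:\cdots:0:1]$, write $\bar R=K[x_0,\dots,x_{n-1}]$, and let $\bar C\subset\PP^{n-1}=\Proj\bar R$ be the image of $C$ under projection from $P$, a reduced union of $\binom{n+1}{2}$ lines (the images of the edges of the simplex). Since a degree-$m$ form of multiplicity $m$ at $P$ involves only $x_0,\dots,x_{n-1}$, one has $[I_C\cap I_P^m]_m=[I_C]_m\cap\bar R_m=[I_{\bar C}]_m$, the elimination ideal $I_C\cap\bar R$ being the saturated (radical) ideal of $\bar C$ because $\bar R/(I_C\cap\bar R)$ embeds in the reduced ring $R/I_C$. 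Inserting this and $[I_P^m]_m=\bar R_m$ into \Cref{FGHM 2.11} and simplifying yields the clean identity
\[
AV_{C,0}(m)=h_{R/I_C}(m)-h_{R/I_{\bar C}}(m)\qquad(m\ge 1),
\]
so the $AV$-sequence measures exactly the Hilbert function lost under a general projection; this is the right framework, in the spirit of \Cref{cor: a=v} and of the cone computations in \cite{FGHM}.

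Next I would compute the two Hilbert functions. The left one is combinatorial: $I_C=\bigcap_{i<j}(x_k:k\neq i,j)$ is the Stanley--Reisner ideal of the $1$-skeleton of the $n$-simplex, i.e.\ $I_C=(x_ix_jx_k:0\le i<j<k\le n)$, and from its $f$-vector $(1,\,n+1,\,\binom{n+1}{2})$ one reads off $h_{R/I_C}(m)=\binom{n+1}{2}(m-1)+(n+1)$ for all $m\ge 2$ (already the Hilbert polynomial), so $\deg C=\binom{n+1}{2}$ and $p_a(C)=\binom{n}{2}$. For $\bar C$ I would compute the arithmetic genus through the normalization $\nu\colon\tilde C=\bigsqcup\PP^1\to\bar C$, which is also the normalization of $C$ since $\pi|_C$ is birational onto $\bar C$ (a general $P$ avoids the secant variety of $C$ once $n\ge 4$). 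For general $P$ and $n\ge 4$ two general-position facts hold: non-adjacent edges of the simplex are skew and stay skew after one generic projection, so $\bar C$ acquires no new singular points; and at each image vertex $\bar e_l=\pi(e_l)$ the $n$ branches keep distinct tangents, but those $n$ directions, linearly independent in $\PP^n$, become dependent in $\PP^{n-1}$, which a local computation for $n$ general concurrent lines in $\mathbb{A}^{n-1}$ shows raises the $\delta$-invariant there from $n-1$ to $n$. Hence $p_a(\bar C)=1-\binom{n+1}{2}+(n+1)n=\binom{n+1}{2}+1$ and $\deg\bar C=\binom{n+1}{2}$, so $P_{\bar C}(m)=\binom{n+1}{2}(m-1)$. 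Once $m$ is past the regularity of $\bar C$, the linear terms cancel and $AV_{C,0}(m)=P_C(m)-P_{\bar C}(m)=n+1$, the first assertion.

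For the refined statements I would use the exactness of $h_{R/I_C}$ for $m\ge 2$ together with the sequence $0\to\mathcal I_{\bar C}(m)\to\mathcal O_{\PP^{n-1}}(m)\to\mathcal O_{\bar C}(m)\to 0$: since $h_{R/I_{\bar C}}(m)=h^0(\mathcal O_{\bar C}(m))-h^1(\mathcal I_{\bar C}(m))=P_{\bar C}(m)+h^1(\mathcal O_{\bar C}(m))-h^1(\mathcal I_{\bar C}(m))$, one obtains
\[
AV_{C,0}(m)=(n+1)+h^1(\mathcal I_{\bar C}(m))-h^1(\mathcal O_{\bar C}(m))\qquad(m\ge 2).
\]
Thus the bounds $AV_{C,0}(m)\ge n+1$ for $m\ge n+1$ and $AV_{C,0}(m)\ge m$ for $3\le m\le n+1$ reduce to showing that $h^1(\mathcal O_{\bar C}(m))$ — the cokernel of the restriction of $H^0(\tilde C,\mathcal O(m))$ onto the gluing data at the $n+1$ singular vertices — is suitably controlled by $h^1(\mathcal I_{\bar C}(m))$ (and that both vanish for $m\gg0$, giving equality). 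Finally, $\adim(C,m,m)=\dim[I_{\bar C}]_m=\binom{m+n-1}{n-1}-h_{R/I_{\bar C}}(m)$, and the same analysis makes this positive for every $m\ge 3$ precisely when $n\ge 5$: the binding case is $m=3$, where $\dim[I_{\bar C}]_3\ge\binom{n+2}{3}-P_{\bar C}(3)=\binom{n+2}{3}-2\binom{n+1}{2}=\binom{n+1}{2}\cdot\tfrac{n-4}{3}>0$ iff $n\ge 5$; since $AV_{C,0}(m)>0$ there as well, $C$ then carries an honest unexpected cone in every degree $m\ge 3$.

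The main obstacle is the cohomological control of $\bar C$ in low degrees. The Hilbert function of $C$ is fully transparent (that of a Stanley--Reisner ring), but $\bar C$ is a union of $\binom{n+1}{2}$ lines in \emph{special but non-coordinate} position in $\PP^{n-1}$; bounding its regularity and pinning down $h^1(\mathcal O_{\bar C}(m))$ and $h^1(\mathcal I_{\bar C}(m))$ for $3\le m\le n+1$ — equivalently, the dimensions of the linear systems of degree-$m$ hypersurfaces through the projected lines — is where the real content lies, and is presumably handled in \cite{CFFHMSS} by an explicit study of those linear systems. One also has to justify carefully the general-position claims about $P$ used in the genus computation of $\bar C$, and to note that $n=3$ is genuinely exceptional: there $\bar C$ is a plane curve, $p_a(\bar C)$ jumps much higher, and $AV_{C,0}(m)$ stabilizes at $\binom{\binom{n+1}{2}-1}{2}-\binom{n}{2}$ rather than at $n+1$.
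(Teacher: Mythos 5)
First, note that the survey states this theorem without proof, citing \cite[Theorem 7.25 and Corollary 7.26]{CFFHMSS}, so there is no in-paper argument to compare against; your proposal has to be judged on its own terms. Your reduction is correct and clean: with $P=[0:\cdots:0:1]$ one indeed has $[I_P^m]_m=\bar R_m$, hence $[I_C\cap I_P^m]_m=[I_{\bar C}]_m$ with $I_{\bar C}=I_C\cap\bar R$ saturated and radical, and unwinding \Cref{FGHM 2.11} gives $AV_{C,0}(m)=h_{R/I_C}(m)-h_{\bar R/I_{\bar C}}(m)$. The Stanley--Reisner computation $h_{R/I_C}(m)=\binom{n+1}{2}(m-1)+(n+1)$ and the genus count $p_a(\bar C)=\binom{n+1}{2}+1$ (via $\delta=n$ at each of the $n+1$ projected vertices and no new crossings for general $P$ when $n\ge 4$) are right, so the first assertion, $AV_{C,0}(m)=n+1$ for $m\gg 0$, is genuinely established by your argument for $n\ge 4$. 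Your observation that $n=3$ stabilizes at $7$ rather than $n+1=4$ (the projection being a plane sextic) is also correct, and shows the survey's unqualified first sentence implicitly needs $n\ge 4$; that is a useful catch.

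The gap is the entire second half of the theorem. The bounds ``at least $m$ for $3\le m\le n+1$,'' ``at least $n+1$ for $m\ge n+1$,'' and the existence of unexpected cones in every degree $m\ge 3$ for $n\ge 5$ all require lower bounds on $\dim[I_{\bar C}]_m$, i.e.\ concrete information about which low-degree hypersurfaces of $\PP^{n-1}$ contain the $\binom{n+1}{2}$ projected lines. You reduce these to the inequality $h^1(\mathcal I_{\bar C}(m))\ge h^1(\mathcal O_{\bar C}(m))$ (plus a quantitative strengthening for $m\le n+1$) and then explicitly defer it; nothing in the proposal establishes it. Worse, the one concrete estimate you do offer, $\dim[I_{\bar C}]_3\ge\binom{n+2}{3}-P_{\bar C}(3)$, already presupposes $h_{\bar R/I_{\bar C}}(3)\le P_{\bar C}(3)$, which is exactly the unproved inequality $h^1(\mathcal O_{\bar C}(3))\le h^1(\mathcal I_{\bar C}(3))$; the only unconditional bound available from the inclusion $\bar R/I_{\bar C}\hookrightarrow R/I_C$ is $\dim[I_{\bar C}]_3\ge\binom{n+2}{3}-2\binom{n+1}{2}-(n+1)$, which equals $-1$ for $n=5$. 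So the positivity of $\adim(C,3,3)$ for $n=5$, and with it the claim that unexpected cones exist in all degrees $m\ge 3$, is not established and is in fact circular as written. Closing the gap requires exhibiting explicit low-degree forms through $\bar C$ (e.g.\ built from products of the coordinate hyperplanes and the linear forms cutting out the planes spanned by $P$ and the lines of $C$) or otherwise bounding the Hilbert function of the projected configuration in degrees $3\le m\le n+1$, which is presumably the content of the cited results in \cite{CFFHMSS}.
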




\end{document}